\documentclass[12pt,draftcls,onecolumn]{IEEEtran}
\usepackage{cite}
\usepackage{calrsfs}

\usepackage{dutchcal}
\usepackage{wrapfig}
\usepackage{amsmath,amssymb,amsfonts}
\usepackage{algorithmic}
\usepackage{graphicx}
\usepackage{algorithm,algorithmic}
\usepackage{hyperref}
\hypersetup{hidelinks=true}
\usepackage{textcomp}
\usepackage{dsfont}
\algsetup{indent=1.5em}
\usepackage{bm}
\usepackage{mathrsfs}

\let\labelindent\relax
\usepackage[shortlabels]{enumitem}
\usepackage{graphicx}
\usepackage{tikz}
\usepackage{nicefrac}
\newtheorem{thm}{Theorem}

\newtheorem{cor}{Corollary}
\newtheorem{lem} {Lemma}

\newtheorem {Def}{{\bf Definition}}
\usepackage{bbm}
 
 \newcommand{\Sscr}{{\mathscr S}}
 \newcommand{\X}{{\cal X}}

\newcommand{\y}{{\bm y}}
\newcommand{\h}{{\bm h}}
\newcommand{\g}{{\bm g}}
\newcommand{\gam}{{\bm \gamma}}
\newcommand{\bsig}{{\bm \sigma }}
\renewcommand{\u}{{ {\bm u}}} 
\newcommand{\U}{{\mathcal U}}
\newcommand{\bdot}[1]{{\bm \dot{#1}}}

\newcommand{\x}{{\bar {\bf x}}}
\renewcommand{\u}{{\bar {\bf u}}}
\newcommand{\eop}{{\hfill $\square$}}

\newcommand{\ignore}[1]{}

\renewcommand{\y}{{\bf y}}

 \newcommand{\Remove}[1]{}
\newcommand{\G}{{\mathcal{G}}}
\newcommand{\hash}{{\mbox{\tiny $\#$}}}
\newcommand{\R}{{\cal R}}
\newcommand{\muL}{\mu_{leb}}

\newcommand{\N}{{\mathscr N}}
\newcommand{\A}{{\mathscr A}}

\newcommand{\vp}{v_p}
\newcommand{\rwc}{\stackrel{rw}{\longrightarrow
} }

\DeclareMathAlphabet{\mathdutchbcal}{U}{dutchcal}{b}{n}

\newcommand{\vr}{v_r}

\newcommand{\Dw}{{\mathcal{d}}}
\newcommand{\Up}{{\mathcal{u}}}
\newcommand{\I}{{\mathcal I}}
\newcommand{\T}{{\mathcal T}}
 \renewcommand{\u}{{ {\bm u}}} 
  \renewcommand{\x}{{ {\bm x}}} 
  \renewcommand{\P}{{\cal P}}

\newcommand{\dt}[1]{ \stackrel{\mbox{\Large{$.$}} }{#1} }

\usepackage{ulem}
\def\BibTeX{{\rm B\kern-.05em{\sc i\kern-.025em b}\kern-.08em
    T\kern-.1667em\lower.7ex\hbox{E}\kern-.125emX}}
\begin{document}
\title{ Optimal Control with $L^\infty$ and Integral Cost Functionals}
 \author{Madhu Dhiman, Veeraruna Kavitha, Nandyala Hemachandra
 \thanks{Department of
Industrial Engineering  and Operations Research, 
IIT Bombay, India. 
  Emails: \{madhu.dhiman, vkavitha, nh\}@iitb.ac.in.}}

\maketitle
 \thispagestyle{empty}
 \pagestyle{empty}
 
\begin{abstract} 
Many control problems are classically formulated using integral costs  that capture the cumulative performance of a system. 
Peak  or worst-case behavior is captured via supremum or $L^\infty$-costs in another variety of control problems. 
When both cumulative and peak performance are important, it is natural to consider objective functions that combine the two costs.  
 Although each criterion is well studied, their combination has not been explored extensively and we precisely work on  such control problems.

Towards establishing the existence, we first consider the relaxed    framework, where control is considered using probability distributions.  Using the well-known compactness and convexity properties of such control spaces, we establish the existence of an optimal relaxed control---we eventually establish the existence of an $\epsilon$-optimal pure (or classical) control, for every $\epsilon > 0.$

Despite these existence results, computing optimal policies remains challenging due to the non-smoothness introduced by the supremum term, and it is not clear whether the dynamic programming principle holds for our combined problem. To address this, we introduce a family of smooth approximations that yield standard control problems with well-defined optimal (pure) solutions. Using Maximum theorem,   we establish that  the solutions of the  smooth problems
among pure controls form $\epsilon$-optimal for the original problem, with $\epsilon$ tending to zero as   the smoothing parameter converges to zero.

Finally using the  methods proposed in this paper, we study a queueing  problem to illustrate (among others) that the required  trade-off between peak congestion levels and cumulative performance can be achieved.
\end{abstract}

\textbf{Index Terms:}
 $L^\infty$ optimal control, relaxed control, Hamilton-Jacobi–Bellman, Maximum theorem, smooth approximation

\section{Introduction}
\label{sec_intro}
Optimal control theory studies the problem of determining a control policy for a dynamical system so as to optimize a prescribed performance criterion over a given time horizon. Classical formulations of optimal control problems typically involve an objective functional that consists of an integral of a running reward together with a terminal reward (see \cite{ fleming2012deterministic, fleming2006controlled, warga2014optimal}). More precisely, for a system governed by the dynamics $\dot{x}(s) = f(s,x(s),u(s))$,  with initial condition $ x(0)=x_0,$
the objective is usually of the form
$J^r(x_0;u) = \int_0^{t_1} L_r(s,x(s),u(s))\,ds + \Psi(x(t_1))$, where $u(s)$ is the control at time $s$, $\Psi(\cdot)$ represents the terminal  cost (or reward) function and $L_r(\cdot, \cdot, \cdot)$  represents the instantaneous cost (or reward) function. 
Such formulations arise naturally in engineering, economics, operations research, and applied mathematics, where the running reward represents the instantaneous performance of the system and the terminal reward captures the reward  associated with the state at the end of the horizon.

While the classical formulation with integral rewards captures the cumulative   performance of the system, in many practical applications it is equally important to account for extreme or peak behavior over the given time horizon. Such peak performance criteria have also been considered in the literature (see \cite{barron1990pontryagin, barron1999viscosity, barron1989bellman, di1999minimax,fialho1999worst, lygeros2004reachability, serea2002discontinuous, vinter2005minimax})---by minimizing the objective functions of the form $J^\infty( x_0; u) := \sup_{s \in [0, t_1]} L_{mx} (s, x(s))$---where $L_{mx} (s, x(s))$, at any time~$s$,     represents the instantaneous value of a certain quantity whose peak value over the horizon needs to be controlled---and solution $x(\cdot)$ is controlled using $u (\cdot)$  as before.

    Optimal control problems of both of the above-mentioned types have been extensively studied in the literature
 (see e.g., \cite{barron1989bellman, fleming2012deterministic,fleming2006controlled}). 
The value function of these control problems satisfies a Hamilton--Jacobi--Bellman (HJB) partial differential equation that arises from Bellman's dynamic programming principle. The HJB equation provides necessary and sufficient conditions for optimality and characterizes  the value function of the control problem. Alternatively, Pontryagin's maximum principle  provides necessary optimality conditions through a Hamiltonian associated with HJB equation, involving the state and adjoint variables (see e.g., \cite{barron1990pontryagin, fleming2006controlled}). These two techniques  are used to numerically solve both types of control problems mentioned above: (a) problems with integral (or running) rewards; and  (b) problems with $L^\infty$ rewards.

In several applications, both aspects, the cumulative running performance and the peak levels, play a crucial role and must be controlled simultaneously.
For example, in an inventory control problem, if there is a requirement and flexibility to design the inventory capacity, one needs to optimize a combined objective function formed using typical  holding/shortage running costs as well as the peak inventory levels reached during the planning horizon (e.g., see \cite{dhiman2024optimal}). 
%
In queueing systems, in a similar way,  the peak congestion levels reached during the operating period impacts an important design aspect,  the capacity of the waiting room (in fact here the peak levels also represent an important quality of service, important from customer standpoint). The  cumulative occupancy, the  cumulative server capacity utilized, etc., represent the typical running costs (e.g., \cite{bauerle2002optimal, malhotra2009feedback}). Our aim again is to provide solutions to such problems  that cater to both the types of costs. 
Similarly, in epidemic control models, the peak number of infected individuals is often of primary concern, as it reflects the maximum burden on the healthcare systems. Again there is a need to control a combined objective function that includes peak-infection levels  and other running costs, like cumulative vaccination costs (see, e.g., \cite{dhiman2023integrative}). 

Motivated by such applications, in this paper, we study a control problem that maximizes a combined  objective function of  the following  form, 
\begin{eqnarray}
    J( x_0;u) &=&   \int_{0}^{t_1} L_r(s, x(s), u(s))ds  + \Psi(x(t_1))  -  \sup_{s \in [0,t_1]} \{ L_{mx} (s, x (s)) \}.
    \label{eqn_objective}
\end{eqnarray}
The above  formulation  can capture a weighted combination of two or more objective functions,  with one of them being  the peak-level reached by a certain quantity during the operating horizon. 
In \cite{barron1999viscosity},  Barron  considered a class of optimal control problems where the objective function combines the two criterion in the following manner:
\begin{equation}
\sup_{t\in[0,t_1]}
\left\{
\int_t^{t_1} L_r(s,x(s),u(s))\,ds - L_{mx}(t,x(t))
\right\}
+ \Psi(x(t_1)). 
\label{Eqn_barron_obj_fun}
\end{equation}
Such formulations possess a structure that is particularly amenable to the dynamic programming approach and can be solved and analyzed  using a particular form  of HJB equations.
However, the objective in \eqref{Eqn_barron_obj_fun} differs fundamentally from that in \eqref{eqn_objective}. The former optimizes the running maximum of a quantity that, at each intermediate time~$t$, combines the   instantaneous penalty at~$t$ with the cumulative reward from time~$t$ to the end of the horizon $t_1$. 
In contrast, the objective function in \eqref{eqn_objective} combines the running maximum with accumulated rewards in a manner that naturally accommodates multi-objective optimization problems.

\ignore{
In contrast, the problem considered in this work involves study a general  control problem that involves a combination of a classical integral cost and an $L^\infty$-type peak cost over the entire time horizon. The objective function is given by
\begin{eqnarray}
    J( x_0;u) &=&   \int_{0}^{t_1} L_r(s, x(s), u(s))ds  + \Psi(x(t_1)) 
    -  \sup_{s \in [0,t_1]} \{ L_{mx} (s, x (s)) \}.
    \label{eqn_objective}
\end{eqnarray}
 This formulation generalizes both the classical optimal control problem with integral cost and the supremum-type formulations studied previously in the literature.

It is important to note that the above problem is different from a standard constrained optimal control problem. In a constrained setting, one typically imposes a condition such as 
$$
\sup_{s \in [0,t_1]} L_{mx}(s,x(s)) \leq \kappa, \mbox{ for some } \kappa< \infty, 
$$
and introduces a Lagrange multiplier term to handle this constraint, thereby converting the problem into an unconstrained one. In that case, the multiplier is chosen (or adjusted) so as to enforce the constraint indirectly.  In contrast, in the present formulation \eqref{eqn_objective} the supremum term appears directly in the objective with a fixed weight, and there is no parameter that can be tuned to recover an equivalent constrained problem. Thus, the problem is not a Lagrangian relaxation of a constraint, but instead reflects a genuine trade-off between cumulative performance and peak values, both of which are optimized simultaneously.}

Our first contribution is to establish the existence of suitable solutions  for problems that optimize \eqref{eqn_objective}. To this end,  we adopt the framework of relaxed controls. By exploiting   the well-known compactness and convexity properties of the relaxed control space, we establish the existence of an optimal relaxed control (see e.g.,~\cite{warga2014optimal}). Then  using the  denseness of pure controls in the relaxed control space (see again~\cite{warga2014optimal}), we prove the existence of  a pure control that is $\epsilon$-optimal for the original problem,   for every $\epsilon > 0$.

While these results guarantee existence, they do not directly lead to a tractable method for computing ($\epsilon$) optimal policies. The presence of the $L^\infty$ term introduces significant analytical challenges: the objective becomes non-smooth 
and it is not clear whether the classical dynamic programming principle applies to this setting.

To address this difficulty, we adopt an approach based on smooth approximations. We first augment the state by introducing an auxiliary variable, $y(t) := \sup_{s \le t}  L_{mx}(s,x(s)) $, that tracks the running maximum along the trajectory. Although this reformulation captures the path-dependent feature, the resulting problem remains non-smooth. We therefore consider a family of smooth problems that approximate the original formulation, leading to standard optimal control problems for which optimal solutions exist and can be computed numerically.
We then show that, as the smoothing parameter tends to zero, the solutions of the smooth problems converge to those of the original problem. This is established in two main steps: (i) the smooth approximate trajectory  converges to the instantaneous maximum trajectory $y(\cdot)$, and (ii) using the Maximum Theorem, the value functions of the smooth control problems converge to that of the original problem.



\ignore{
The presence of the supremum term introduces significant analytical challenges. Unlike classical optimal control problems, the $L^\infty$-term depends on the entire trajectory of the state process, and it is not a priori clear whether the classical dynamic programming framework applies.  Thus, we adopt an alternative approach based on relaxed controls. By using the compactness and convexity properties of the relaxed control space, we establish the existence of optimal relaxed controls. Furthermore, using the well-known denseness of pure controls in the relaxed control space, we show that for every $\epsilon>0$, there exists a classical control that is $\epsilon$-optimal for the relaxed problem.

While these results ensure existence and approximation, the non-smoothness induced by the $L^\infty$-term makes the problem computationally challenging.
To overcome this difficulty, we first introduce a maximum trajectory that tracks the running maximum of the $L^\infty$-term along the trajectory.  However, even in this augmented formulation, the objective remains non-smooth.

For computation tractability, we introduce a family of smooth approximations of the maximum trajectory, parameterized by a regularization parameter. These approximations replace the non-smooth maximum trajectory with a smooth surrogate 
 that enables analytical tractability, 
 allowing us to employ tools from classical optimal control theory. 
Within this framework, we prove the existence of  optimal control  for the smooth control problems and that the corresponding value functions converge, as the smoothing parameter tends to zero, to the value of the original non-smooth problem. Furthermore, we show that optimal solutions of the approximate problems are $\epsilon$-optimal for the original problem, with approximation guarantees that are uniform in the smoothing parameter.

These results collectively provide a systematic and rigorous approach to analyzing optimal control problems involving a combination of integral and peak cost criteria, and extend the applicability of optimal control theory to settings where both cumulative performance and peak behavior must be simultaneously controlled.}

\subsection*{Connection to hard constraint problem} It is important to note here that the  problem \eqref{eqn_objective} is different from a standard constrained  problem as in \cite{ maurer1977optimal, vinter1998necessary}---where one considers optimizing objective function,  $\int_0^{t_1} L_r(s,x(s),u(s))\,ds + \Psi(x(t_1))$, while   imposing a condition such as 
\begin{eqnarray}
   \label{Eqn_hard_constraint} 
\sup_{s \in [0,t_1]} L_{mx}(s,x(s)) \leq \kappa, \mbox{ for some } \kappa< \infty. 
\end{eqnarray}
Such problems are often addressed by introducing a Lagrange multiplier to incorporate the constraint into the objective, thereby transforming the constrained problem into an unconstrained one resembling \eqref{eqn_objective}. One then seeks an appropriate multiplier that guarantees satisfaction of the original constraint (see \cite{clarke2013functional, gollmann2009optimal, malanowski2004second}). In contrast, our problem is solved for a prescribed multiplier, which directly represents the desired trade-off factor. The two problems also differ fundamentally from the standpoint of applications. A formulation with a hard constraint is appropriate when, for example, a resource is subject to a strict upper bound. In contrast, our formulation is applicable when there is flexibility in determining the optimal level of the resource, with the trade-off governed by the prescribed multiplier.

Another closely related class of problems is that of optimal control with state constraints (see \cite[Section 7]{barron1999viscosity})---these problems can also be seen as problems with hard constraint as in \eqref{Eqn_hard_constraint}. 
Once again, for such problems, an appropriate Lagrange multiplier must be selected, whereas our problem seeks a solution for a given multiplier or trade-off factor.


Moreover, for optimal control problems with state constraints, it is well known that establishing a viscosity solution framework is, in general, a challenging and unresolved issue (see \cite[page~7]{barron1999viscosity}). The control formulation involving \eqref{eqn_objective} is of comparable complexity due to the presence of the supremum term. We overcome this difficulty by constructing a family of smooth approximating problems that converge to the original problem as the smoothing parameter tends to zero, thereby deriving the solution to the original problem through a suitable limiting argument.


Finally, the control framework developed in this work can be effectively applied to a broad range of real-world problems, including electricity load balancing, queueing systems, inventory control, epidemic control, etc. In this paper, we also investigate a queueing control problem as an example application of the proposed framework, while another example concerning inventory control is studied in \cite{dhiman2024optimal}. In our earlier short paper \cite{dhiman2024optimal}, we focused only on the smooth-approximation framework (one specific example) and presented inventory control application. In contrast, the present paper focuses on the combined problem \eqref{eqn_objective}, establishing the existence of its $\epsilon$-optimal solutions through solutions of a sequence of smooth approximating problems that can be solved using standard techniques --- using the relaxed control framework and applying the Maximum theorem. 

\subsection*{Queueing $L^\infty$ control}
  The aim here is  to study the contrasts in the  designs that cater to the two congestion costs (peak as well as cumulative) with varying degrees of importance. Using the $L^\infty$ smooth approximate framework  developed in this paper, we obtain the required comparison. We basically study the  two different (and extreme) Pareto frontiers, one that trades-off between the peak-congestion levels and the cumulative server utilization, and the second that trades-off between the cumulative congestion levels and the cumulative server utilization.

We observe that there is  a significant   reduction in the peak-congestion levels (even up to $27 \%$) when one  includes the latter cost into the optimization problem; however this \textit{improvement is at the expense of the cumulative congestion cost, which degrades significantly ($20 \%$)}.  
%
Thus,  the design for  queuing systems must consider the weight factors for  the three costs judiciously and our proposed framework makes it possible to derive the corresponding optimal policy numerically.  
Interestingly, however, when the cumulative 
server utilization is either too small or is too high, there is negligible difference in the 
performance(s) with and without considering the peak-levels into the design.

 \ignore{ 


The presence of the supremum term introduces significant analytical challenges. Unlike classical optimal control problems, the $L^\infty$ component depends on the entire trajectory of the state process and therefore cannot be directly incorporated into the standard dynamic programming framework. To overcome this difficulty, we introduce a maximum trajectory that tracks the running maximum of the $L^\infty$ cost along the trajectory. This augmented state representation allows the problem to be reformulated as a standard optimal control problem in an extended state space.

The reformulated problem admits a dynamic programming characterization, where the corresponding value function satisfies a Hamilton--Jacobi--Bellman equation on the augmented state space. However, the resulting equation is nonstandard due to the presence of a max-type nonlinearity, which introduces additional analytical complexities. To address these issues, we develop a suitable approximation framework that replaces the supremum term by a smooth approximation, enabling the use of classical analytical techniques.

Under appropriate regularity assumptions on the system dynamics and cost functions, we establish the existence of optimal controls and characterize the value function as the unique viscosity solution of the associated Hamilton--Jacobi--Bellman equation. Furthermore, we show that the solutions of the approximating problems converge to the solution of the original problem, thereby providing a rigorous justification for the approximation scheme.

\medskip

\noindent \textbf{Contributions.} The main contributions of this paper are summarized as follows:
\begin{enumerate}
    \item We formulate a class of optimal control problems involving  combination of integral and $L^\infty$ cost criteria, which extends classical formulations in a nontrivial manner.
    
    \item We introduce an augmented state-space representation that transforms the nonstandard problem into an equivalent control problem amenable to dynamic programming \textcolor{blue}{why dp here?}.
    
    \item We derive the associated Hamilton--Jacobi--Bellman equation and establish a viscosity solution characterization of the value function.
      \textcolor{blue}{do we need to say this? and how?}
    \item We propose a smooth approximation scheme for the $L^\infty$ term and prove convergence of the corresponding value functions to that of the original problem.
    
    \item We establish existence of optimal controls and provide a rigorous analytical framework for control problems involving peak-cost objectives.
\end{enumerate}}

\ignore{
Optimal control theory studies the problem of determining a control policy for a dynamical system so as to optimize a prescribed performance criterion over a given time horizon. Classical formulations of optimal control problems typically involve an objective functional that consists of an integral of a running cost together with a terminal cost. More precisely, for a system governed by the dynamics $\dot{x}(t) = f(t,x(t),u(t)), \quad x(\tau)=x,$
the objective is usually of the form
$J(\tau,x;u) = \int_\tau^{t_1} L(t,x(t),u(t))\,dt + \Psi(x(t_1))$.
Such formulations arise naturally in engineering, economics, operations research, and applied mathematics, where the running cost represents the instantaneous performance of the system and the terminal cost captures the desired final state. Optimal control problems of this type have been extensively studied in the literature; see the classical monographs \cite{fleming2012deterministic,fleming2006controlled}. 
Within this framework, the value function of the control problem satisfies a Hamilton--Jacobi--Bellman (HJB) partial differential equation that arises from Bellman's dynamic programming principle. The HJB equation provides necessary and sufficient conditions for optimality and characterizes the value function of the control problem.

The development of optimal control theory has followed two complementary approaches. The first is the variational approach based on Pontryagin's maximum principle, which provides necessary optimality conditions through a Hamiltonian system involving the state and adjoint variables. The second approach is the dynamic programming framework pioneered by Bellman, where the value function satisfies the Hamilton--Jacobi--Bellman equation and optimal policies can be obtained from the maximization of the Hamiltonian. 
Over the past several decades, a large body of literature has investigated such problems for deterministic and stochastic systems, finite and infinite horizon problems, and various types of running and terminal costs.

While the classical formulation with integral costs captures the cumulative performance of the system, many practical applications require the control of extreme or peak values of certain quantities over the time horizon. In several engineering systems, the worst-case or peak value of a variable can be more critical than its cumulative behavior. For instance, in power systems and communication networks it is important to control peak loads or congestion levels; in inventory and production systems large instantaneous backlogs may be undesirable; and in risk-sensitive decision making one often seeks to limit extreme deviations rather than merely minimizing cumulative costs. Such considerations naturally lead to control formulations involving $L^\infty$-type performance criteria.

Motivated by such applications, several authors have studied optimal control problems involving supremum-type objective functions. In particular, Barron and Ishii \cite{barron1999viscosity} considered a class of optimal control problems where the objective involves a supremum over time of a function that combines the running cost and an instantaneous penalty term. A representative formulation considered in their work takes the form
\begin{equation}
\sup_{t\in[\tau,t_1]}
\left\{
\int_t^{t_1} L_r(s,x(s),u(s))\,ds - L_{mx}(t,x(t),u(t))
\right\}
+ \Psi(x(t_1)).
\end{equation}
Such formulations possess a structure that is particularly amenable to the dynamic programming approach. The associated value function satisfies a Hamilton--Jacobi equation that can be analyzed using the viscosity solution framework, and explicit solution representations can sometimes be obtained through Hopf--Lax type formulas. These works provide a rigorous PDE characterization of optimal control problems involving supremum-type criteria.

Despite these developments, the structure considered in the above formulation is somewhat restrictive from the viewpoint of applications. In many practical systems one simultaneously encounters two types of performance objectives: (i) cumulative performance measured through an integral of running rewards, and (ii) constraints or penalties associated with the peak value of a quantity over the entire time horizon. In such situations the decision maker must balance overall system performance with the need to limit extreme deviations. This naturally leads to optimization problems where both types of costs appear together in the objective function.

In this work we study an optimal control problem that involves a weighted combination of a classical integral cost and an $L^\infty$-type peak cost over the entire time horizon. The objective functional is given by
\begin{eqnarray}
    J(\tau, x;u) &=&   \int_{\tau}^{t_1} L_r(s, x(s), u(s))ds  + \Psi(x(t_1)) 
    - \sup_{s \in [\tau,t_1]} \{ L_{mx} (s, x (s)) \}.
\end{eqnarray}
This formulation generalizes both the classical optimal control problem with integral cost and the supremum-type formulations studied previously in the literature. The parameter $\sigma>0$ represents the relative importance assigned to controlling the peak cost compared with maximizing the accumulated running reward.

The presence of the supremum term introduces significant analytical challenges. Unlike classical optimal control problems, the $L^\infty$ component depends on the entire trajectory of the state process and therefore cannot be directly incorporated into the standard dynamic programming framework. To overcome this difficulty, we introduce an auxiliary state variable that tracks the running maximum of the $L_\infty$ cost along the trajectory. This augmented state representation allows the problem to be reformulated as a standard optimal control problem in an extended state space.

Using this reformulation, we establish the dynamic programming principle for the resulting control problem and characterize the associated value function. In particular, we derive the Hamilton--Jacobi--Bellman equation corresponding to the augmented control problem and show that the value function satisfies this equation in the viscosity sense. Furthermore, we provide a verification theorem that characterizes optimal controls in terms of the solution of the HJB equation.

The main contributions of this work can be summarized as follows. First, we introduce a new class of optimal control problems involving arbitrary weighted combinations of integral costs and supremum-type peak costs over the entire horizon. Second, we develop a dynamic programming framework for this problem by augmenting the state space with a variable that captures the running maximum of the $L^\infty$ term. Third, we derive the associated Hamilton--Jacobi--Bellman equation and establish verification results that characterize optimal controls.

The rest of the paper is organized as follows. In Section~\ref{Aux_state}, we reformulate the original problem by introducing an auxiliary state variable that captures the peak cost term. We then establish the dynamic programming principle for the resulting value function. Subsequently, we derive the Hamilton--Jacobi--Bellman equation associated with the control problem and establish a verification theorem that characterizes optimal controls.}

We introduce the problem in Section \ref{sec_general_problem} and establish the existence results in Section \ref{sec_existence_results}. The smooth approximation framework is developed in Section \ref{sec_approximation_by_smooth}, followed by the numerical solution approach in Section \ref{sec_numerical_results}. A queuing control example is presented in Section \ref{sec_queu_control}, and we conclude in Section~\ref{sec_conclusions}.

\section*{Notation}  We use small letters to represent  variables  (e.g., $x(t)$, for some $t$), while the functions of time $t \in T$ are represented by bold small letters (e.g., $\x = (x(t), t \in T)$, $\bsig = (\sigma(t),t \in T) $). The dependency on parameters is  explicitly indicated only when there is a  requirement, and the parameters are shown in the subscript (e.g., $\x_{\bsig, x_0}$).

\section{Model description}
\label{sec_general_problem}

We consider  a non-standard, but highly relevant   optimal control problem in which the objective combines the conventional integral payoff over the finite horizon \(T=[0,t_1]\), where \(t_1<\infty\), with an \(L^\infty\)-term that accounts for the peak value attained by a certain quantity along the state trajectory. Specifically,
\begin{eqnarray}
\vp (x_0)&:=& \sup_{\u \in \U} J(x_0;\u), \qquad
\U := \{ \u:T  \to U \ | \ \u \text{ is measurable}\}, \label{Eqn_combined_problem}\\
J(x_0;\u) &=& \int_{0}^{t_1} L_r(s,x(s),u(s)) \ ds
+\Psi(x(t_1))
-\sup_{s\in T} \{L_{mx}(s,x(s))\}, \nonumber
\end{eqnarray}
subject to  \vspace{-3mm}
\begin{eqnarray} \dot{x}(s)=f(s,x(s),u(s)),  \quad \mbox{ with initial condition,} \  \  x(0)=x_0. 
\label{Eqn_x_pure_traj}
\end{eqnarray}
Here, \(L_r: T\times\mathbb{R}^n\times U\to\mathbb{R}\) is the running reward, \(f:T\times\mathbb{R}^n\times U\to\mathbb{R}^n\) defines the system dynamics with compact control set \(U\subset\mathbb{R}^p\), and \(\Psi:\mathbb{R}^n\to\mathbb{R}\) is the terminal reward for some $p, n < \infty$ (e.g., \cite{fleming2012deterministic, fleming2006controlled}). The distinguishing feature of \eqref{Eqn_combined_problem} is the inclusion of the \(L^\infty\)-term,  \(\sup_{s\in T} \{L_{mx}(s,x(s))\}\) with \(L_{mx}: T\times\mathbb{R}\to\mathbb{R}\), which   captures the maximum value attained by the performance measure $L_{mx}$ over the time horizon.

\ignore{
We consider a non-standard, but highly relevant control problem  involving a weighted combination of an integral term (running rewards accumulated over the given horizon $T = [0,t_1]$ for some $t_1 < \infty$)  and an $ L^\infty$-term (peak levels reached in the given horizon, $\sup_{s \in T} \{ L_{mx} (s, x (s) ) \}$) in  the objective  function  as  below:  
\begin{eqnarray}
    \vp (x_0)&:=& \sup_{\u \in \U} J(x_0;\u), \mbox{ with }   \U := \{ \u :T \to {U} ,  \mbox{ a measurable function}\},  \label{Eqn_combined_problem}\\
    J(x_0;\u) &:=&   \int_{0}^{t_1} L_r(s, x(s), u(s))ds  + \Psi(x(t_1)) -   \sup_{s \in T} \{ L_{mx} (s, x (s) ) \}, \mbox{ and, } \nonumber \\
 \mbox{ subject to, }   \  \bdot{x} &=& f(s, x(s),u(s) ),  \quad  \mbox{ and } x(0) = x_0, \nonumber
\end{eqnarray}
   for any given initial condition $x_0$;
here, $L_r:T \times \mathbb{R}^n \times  U \rightarrow \mathbb{R}$ is the running reward function, $f:T \times \mathbb{R}^n \times  U \rightarrow \mathbb{R}^n$ drives the dynamics  (for compact control set   $ U \subset \mathbb{R}^p$, where $n,p < \infty$) and $\Psi: \mathbb{R}^n \rightarrow \mathbb{R}$ is the terminal reward function as  considered in the standard optimal control literature (e.g., \cite{fleming2012deterministic, fleming2006controlled}). Importantly, there is an additional inclusion of     the $L^\infty$ term $\sup_{s \in T} \{ L_{mx} (s, x (s)) \}$  in \eqref{Eqn_combined_problem}, with  $L_{mx} : T \times \mathbb{R} \to \mathbb{R}$. 
}

 As already mentioned, the formulation in the literature closest to \eqref{Eqn_combined_problem} involves optimizing \eqref{Eqn_barron_obj_fun} over controls $\u$ (see, e.g., \cite{barron1999viscosity}). This class of problems admits the Dynamic Programming (DP) principle, leading to an appropriate HJB equation (e.g., \cite{barron1999viscosity}). However, the type of combination considered in \eqref{Eqn_barron_obj_fun} does not adequately capture multi-objective problems—particularly those in which some objectives require controlling $L^\infty$  or peak terms over the entire (or partial\footnote{One can control peak levels over any partial horizon $T' \subsetneq T$ by setting $L_{mx}(s,\cdot)\equiv 0$ for $s \in T'$.}) horizon, while others involve multiple integral-type utilities defined through running rewards over the same horizon.
In contrast, the formulation in \eqref{Eqn_combined_problem} accommodates such weighted combinations.

\subsection{Some important applications}
\label{sec_applications}
We now present some applications that require an objective as in \eqref{Eqn_combined_problem} that accounts for both cumulative and worst-case performance.

   \subsubsection{Queueing problem} 
   \label{subsec_queue_example}Consider a queuing system with a single server that faces fluctuating demands. Let ${\bm \alpha} = (\alpha(t), t \in T) $ represent the time-varying arrival rate to the system and ${\bm \mu} =  (\mu(t, u), t \in T, u \in U)$ 
   represent the controlled service rate process chosen by the controller, see Notations. Then, the queue length ${\bm x} $ process at the fluid limit can be approximated by an ODE, when controlled using policy $\u  $ (see \cite{mandelbaum1998strong}, \cite{pender2017approximations}):
\begin{equation}
    \dot{x}(s)
    = \alpha(s) -  \mu(s;u(s)), \quad \forall s \in T, \mbox{ and initial condition } x(0) = x_0.   
    \label{Eqn_queue_dynamics}
\end{equation}
  The literature (e.g., \cite{aland2011exact, bauerle2002optimal, malhotra2009feedback}) typically considers congestion cost given by a function $\g$  that depends upon the cumulative number  in the system and a cost for server utilization    given by function ${\bf h}$,  and considers optimizing a combined problem  as below: 
\begin{equation*}
\sup_{u  } \ 
   - \left (  \int_0^{t_1} \left[ \rho  g(x(s)) + h \big (\mu(s;u) \big ) \right ] ds + \Psi(x(t_1))\right), \ \mbox{ where, } \Psi(x) = \eta g(x),
\end{equation*}%
where $\rho>0$ is the trade-off factor between the two costs and    $\eta \ge 1$ represents the possibly amplified cost at the terminal time (to potentially account for possible loss of customers or for possible extended time period to complete the service of the left-over customers).   

We now consider another important Quality of Service (QoS) metric, namely the peak congestion level attained during the operating period. This can be formally captured by the instantaneous cost function
$L_{mx}(s,x):=  x, \mbox{ for all } x, $
which induces the $L^\infty$-performance criterion
$\sup_{s \in T} \{ x(s) \}$ as in \eqref{Eqn_combined_problem}.
This QoS is important from the perspectives of both the system and the customers. It reflects the physical capacity constraint of the waiting room and/or a maximum burden on the system; it can also represent an important factor that can dither away the customers for future service considerations of the same system. We now propose to optimize the following combined cost that also incorporates the peak congestion levels:
\begin{eqnarray}
\sup_{u  }  \ \  \left (  -\beta \sup_{s \in T} \{ x(s) \} - \int_0^{t_1} \left [ \rho g(x(s)) +  h\big (\mu(s; u)\big ) \right ]  ds - \Psi(x(t_1)) \right ),  \label{Eqn_Queue_problem_with_peak}
\end{eqnarray}%
where $\rho,  \beta \ge 0$ are constants that represent the trade-offs between various components. We will analyze this problem in Section \ref{sec_queu_control}, after the $\epsilon$-optimal solutions  are proposed.

\ignore{
\subsubsection{Inventory problem \cite[Section III]{dhiman2024optimal}}
Consider a manufacturer producing a single product and maintaining inventory over a planning horizon $[0,t_1]$. 
The manufacturer dynamically controls both the production rate $\u$ and the price $\bm{p}$ to optimize inventory levels and steer customer demand. Accounting for time variations and price sensitivity, the demand at time $t$ is modeled as
$d(t) = \left(\alpha(t) - \beta(t)p(t)\right)^+,$
where ${\bm \alpha}$ denotes the time-varying market potential and ${\bm \beta}$ the price sensitivity function (see \cite{}).
Let $x(t)$ denote the inventory level at time $t$, which evolves as below,  when the initial condition $x(0)=x_0$:
\begin{eqnarray}
\bdot{x}(t) = u(t) - \left(\alpha(t) - \beta(t)p(t)\right)^+, \mbox{ for any } t \in T. 
\label{Eqn_inventory_dynamics}
\end{eqnarray}
Positive inventory represents stock on hand, while negative inventory represents backlog. Demand is met immediately at time $t$, when $x(t)>0$ and backlogged otherwise. 

\noindent{\bf Costs and revenue:}
The instantaneous costs and revenues at time $t$ are as follows. The manufacturer incurs a quadratic production cost $au^2(t)$ and earns an instantaneous revenue of $p(t)d(t)$. In addition, depending on whether the inventory level is in surplus or
shortage, the inventory cost is modeled by the piecewise quadratic function (see \cite{dhiman2024optimal}) $h(x)= C_s \mathds{1}_{\{ x<0\}} + C_h \mathds{1}_{\{ x\ge 0\}}$
\[
h(x)=
\begin{cases}
C_sx^2, & x<0,\\
C_hx^2, & x\ge0,
\end{cases}
\]
where $C_h,C_s>0$. The corresponding terminal inventory cost at $x(t_1)$ is
\[
h_{t_1}(x)=
\begin{cases}
C_s^{(t_1)}x^2, & x<0,\\
C_h^{(t_1)}x^2, & x\ge0,
\end{cases}
\]
where the terminal cost coefficients $C_h^{(t_1)},C_s^{(t_1)}>0$ may differ from $C_h$ and $C_s$.

The instantaneous costs and revenues at time $t$ are described as follows. The manufacturer incurs a quadratic production cost given by $a u^2(t)$ and earns an instantaneous revenue of $p(t)d(t)$. In addition, depending on whether the inventory level is in surplus or shortage, the manufacturer incurs an inventory-related cost represented by the piecewise quadratic function $h$ (see \cite{dhiman2024optimal}):
\[
h(x)=
\begin{cases}
C_s x^2, & x<0,\\
C_h x^2, & x\ge 0,
\end{cases}
\]
where $C_h>0$ and $C_s>0$ denote the holding-cost and shortage-cost coefficients, respectively.
Similarly, the terminal inventory cost associated with the final inventory level $x(t_1)$ is represented by
\[
h_{t_1}(x)=
\begin{cases}
C_s^{(t_1)} x^2, & x<0,\\
C_h^{(t_1)} x^2, & x\ge 0,
\end{cases}
\]
where $C_h^{(t_1)}>0$ and $C_s^{(t_1)}>0$ are the terminal holding-cost and shortage-cost coefficients, respectively. These terminal cost coefficients may differ from those incurred during the planning horizon.

Now we describe the instantaneous costs and revenues corresponding to time $t$. 
The manufacturer incurs quadratic production cost $au^2(t)$ and derives instantaneous revenue  $p(t)d(t)$. Further depending upon the shortage or surplus of the inventory, it incurs  an instantaneous cost  $h(x(t))$,  which is again captured by a  piece-wise quadratic function   $h$    for some  positive constants $C_h, C_s$   as below
 (see \cite{dhiman2024optimal}):
$$
h(x) =
\begin{cases}
C_s x^2, & x<0,\\
C_h x^2, & x\ge 0,
\end{cases}
\quad \mbox{ and }\quad 
h_{t_1}(x) =
\begin{cases}
C_s^{(t_1)} x^2, & x<0,\\
C_h^{(t_1)} x^2, & x\ge 0,
\end{cases}
$$
captures similar cost corresponding to  final inventory level for $x=x(t_1)$ which can have a different set of constants $C_h^{(t_1)}$ and $C_s^{(t_1)}$.

, capturing end-of-horizon effects such as backlog or liquidation costs; the  coefficients $C_h^{(t_1)}$ and $C_s^{(t_1)}$ allow these penalties to differ from the running costs.  Thus, the total profit over $[0,t_1]$ is given by
\begin{equation}
    \label{Eqn_obj_fun}
    J(x_0;u,p) = \int_0^{t_1}\Big(p(s)(\alpha(s)-\beta(s)p(s))^+ - au^2(s) - h(x(s))\Big)ds - h_{t_1}(x(t_1)).
\end{equation}
There is  another important aspect in the inventory management  that of minimizing the peak inventory levels reached during the planning horizon.  Clearly, the physical structure that holds the inventory, once constructed, can not be altered easily. This calls for a careful (or an optimal) design of the storage capacity. 
 
This leads to a joint optimization problem that optimizes a given weighted combination of the usual inventory costs (as in \eqref{Eqn_obj_fun}) and the physical storage capacity. Such a consideration is well understood to provide a  better policy, but can be (needs to be) implemented only when there is a flexibility to design storage of any required size.

Towards the above mentioned joint optimization problem, we  consider  ``supremum inventory levels'' or technically introduce the $L^{\infty}$ norm of the inventory levels into the optimization problem \eqref{Eqn_obj_fun} as below:
\begin{eqnarray*}
 v(x_0) &:=& \sup_{(u,p) } \bigg(J( x_0; u, p )  - \sup_{s \in [0,t_1]} \left\{  x(s) \right\} \bigg)    \mbox{ subject to } \eqref{Eqn_inventory_dynamics}. 
\end{eqnarray*}
In \cite{dhiman2024optimal}, we have analyzed this inventory control problem using the $\epsilon$-optimal solutions proposed in this paper. 
}

\subsubsection{Epidemic control problem} 
We consider typical epidemic propagation model, where  infection spreads through contacts between infected and susceptible individuals (say at rate $\bar \beta$) and the infected individuals recover at rate, say $\alpha$. With $i(t)$ representing the fraction of infected individuals at time $t \in  [0,t_1]$,  such propagation is captured by the well known  Susceptible–Infected–Susceptible, or briefly SIS, model 
  given by the following differential equation or ODE (see e.g., \cite{allen1994some, dhiman2023integrative, kuhl2021computational}):
\begin{equation}
    \frac{di(t)}{dt} =  i(t) \left( \bar{\beta}(1-i(t)) - \alpha \right)  \mbox{  for all } t \in T = [0, t_1]. 
    \label{Eqn_SIS}
\end{equation}
However, it is more realistic to consider a time-varying infection rate  rather than a constant  $\bar{\beta}$. There are several factors that can drive such variability, and a vast literature addresses this aspect (see, e.g., \cite{dhiman2023integrative, funk2010modelling, pastor2015epidemic, perra2011towards}). For example, in a recent work \cite{dhiman2023integrative}, the authors model infection rate as evolving with individuals’ responses to epidemic-related information circulating on online social networks (OSNs). Specifically, exposure to misleading or adversarial content can reduce precautionary behavior, whereas reliable or authentic information from health agencies promotes safer practices. Thus, misinformation tends to increase the infection rate, while authentic information
helps to reduce~it.

The authors in \cite{dhiman2023integrative},   consider a framework in which epidemic dynamics are coupled with the information spread process. Using stochastic approximation tools and some ideas from  two-time-scale 
framework applied for the approximating ODEs, the authors propose that such interactions can be captured by \eqref{Eqn_SIS}, after  replacing  
 $\bar{\beta}$ by infection-dependent and time varying spread rate,  $\bar{\beta} + w(i )$. 
 Here, \(w(t)=w(i(t))\) denotes the time-varying spread rate, where
$w(i) :=i(u_a-u_s)$---the constant \(u_a\) captures the intensity of misinformation dissemination by malicious individuals---while \(\u_s=(u_s(t),  t\in T)\) denotes the control policy implemented by a social planner  over the planning horizon $T$---specifically, \(u_s(t)\) represents the rate at which the authentic information is disseminated at time $t$. 
 The resulting controlled infection dynamics, for any policy $\u_s$, are given by:
\begin{eqnarray*}
\frac{di(t)}{dt} &=&  i(t) \bigg( \left(\bar{\beta} + (u_a - u_s(t)) i (t)\right)(1-i(t)) - \alpha \bigg),  \mbox{  for all } t \in T. 
\end{eqnarray*}
It is clear that in such problems, the social planner would be interested in minimizing the cumulative level of infected people, which can be captured by  $\int_0^{t_1} i(t) dt$ and a cost that results from the efforts to  spread the  authentic information on  OSNs, captured by $\int_0^{t_1} u_s(t) dt$. In these scenarios, another crucial performance-characteristic would be the 
  \textit{peak} levels of infection or $\sup_{t \in T} i(t)$. If the infection levels reach a  high  peak   at any time  point during the   horizon, it leads to a  huge burden on the healthcare system. 
Thus, the social planner ideally  would like to consider a multi-objective optimization problem,  which includes such peak levels---as in the following  for some appropriate trade-off factors $\rho, \gamma>0$:
\begin{eqnarray*}
 \sup_{u_s} \bigg(- \rho \sup_{t \in T} \{i(t)\} - \int_0^{t_1} \left( i(t) + \gamma u_s(t) \right) dt \bigg).
\end{eqnarray*}
One can solve the above  problem using 
the solution proposed in the paper as it  fits into the framework of \eqref{Eqn_combined_problem}-\eqref{Eqn_x_pure_traj}.

The formulation in \eqref{Eqn_combined_problem}-\eqref{Eqn_x_pure_traj} can capture many more applications. In \cite{dhiman2024optimal}, we study an inventory control problem, where control of peak inventory level can imply an optimal design of the storage capacity. We now  proceed towards  the existence results.

\section{Existence results}
\label{sec_existence_results}
Our first aim is to establish the existence of a solution to  the problem formulated in \eqref{Eqn_combined_problem}-\eqref{Eqn_x_pure_traj}, more precisely an $\epsilon$-optimal solution. The presence of the $L^\infty$-term implies deviation from the standard framework---we are not even sure if  the classical dynamic programming equations are applicable to such combined problems. 
To prove the existence of solution for \eqref{Eqn_combined_problem}-\eqref{Eqn_x_pure_traj}, we rather take the direct route of establishing `continuity' of the `objective function' and compactness of the `domain'.  
The  main idea is two-fold: (a) to extend the framework to include  relaxed controls and consider  the related weak-topology (see e.g., \cite{warga2014optimal}), that provides the required compactness and continuity properties; and (b) to obtain $\epsilon$-optimal solutions of  the original problem  using the existence in the extended framework and some denseness properties.  


 We begin by listing the required assumptions, which are similar to those considered in the majority of the literature (see, e.g., \cite{fleming2012deterministic,fleming2006controlled,roxin1962existence}). \textit{These assumptions ensure that the optimal control problem in \eqref{Eqn_combined_problem}--\eqref{Eqn_x_pure_traj} admits a solution among pure controls $\U$ when the $L^\infty$ term is absent, i.e., when $L_{mx} \equiv 0$. This can be established using standard results, such as those in \cite{roxin1962existence}. The main aim of this paper is to provide an $\epsilon$-optimal solution to the same problem when the $L^\infty$ control term is included, under the same set of assumptions.} 
\begin{enumerate}[label=\textbf{A.\arabic*}, ref=\textbf{A.\arabic*}]
\setcounter{enumi}{-1}
\item \textit{The control space ${U}$ is a compact set.} \label{assum_a0}

\item \textit{The function $f$ is continuous and   there exists    $K_f>0$, such that for all  $(s,u) \in T  \times U$: \\ 
 \textbf{Lipschitz continuity:}   ($|\cdot|$ represents the Euclidean norm), 
{\begin{eqnarray*}
    & |f(s, x, u)-f(s, x', u)|   < K_f| x- x'|, \mbox{ for all } x, x' \in \mathbb{R}^n. 
\end{eqnarray*}}
\textbf{Uniform  boundedness:} 
$|f(s, x, u)|   \leq  K_f (1 + |x|), \mbox{ for all } x \in \mathbb{R}^n. $}
\label{assum_a1}

\item \textit{The functions $L_r$, $L_{mx}$  and $\Psi$ are continuous.  Further,   for all $s \in T,$ 
$$|L_{mx}(s, x)-L_{mx}(s, x')|  < K_L| x- x'|, \mbox{ for all } x, x' \in \mathbb{R}^n, $$  for some appropriate  $K_L < \infty$.  }
\label{assum_a2}

\item  \textit{The function $L_{mx}$ is continuously differentiable. Further, with 
\begin{eqnarray}
\Delta_{mx} (s, x, u) 
:= 
\frac{\partial L_{mx}(s, x)}{\partial s} + \frac{\partial L_{mx}(s, x)}{\partial x} f(s, x, u), 
\label{Eqn_Delta_mx}
\end{eqnarray}
 there exists $K_\Delta >0$, such that for all $(s, u) \in T \times  U$ 
\begin{eqnarray*}
   \hspace{-7mm} | \Delta_{mx} (s, x, u ) - \Delta_{mx} (s, x', u )| \hspace{-1mm}&\hspace{-1mm}\le\hspace{-1mm}&\hspace{-1mm} K_\Delta|x-x'|,  \forall x, x' \in \mathbb{R}^n, \\
  \mbox{ and  } \quad  |\Delta_{mx} (s, x, u )| \hspace{-1mm}&\hspace{-1mm}\le\hspace{-1mm}&\hspace{-1mm} K_\Delta (1+ |x|), \forall x \in \mathbb{R}^n. 
\end{eqnarray*}
\label{assum_a3}
}
    \item \textit{$f(s, x, U):= \{f(s, x, u): u \in  U \}$ is a convex set, for each $(s, x) \in T \times \mathbb{R}^n$. }\label{assum_a4} 
    \item \textit{There exists $K_r >0$, such that for all $(s, u) \in T \times  U$ 
\begin{eqnarray*}
    \hspace{-7mm}  | L_r (s, x, u ) - L_r (s, x', u )| \hspace{-1mm}&\hspace{-1mm}\le\hspace{-1mm}&\hspace{-1mm} K_r|x-x'|,  \forall x, x' \in \mathbb{R}^n, \\
   \mbox{ and  } \quad |L_r (s, x, u )| \hspace{-1mm}&\hspace{-1mm}\le\hspace{-1mm}&\hspace{-1mm} K_r (1+ |x|),  \forall x \in \mathbb{R}^n. 
\end{eqnarray*}}
\label{assum_a5}
\end{enumerate}

\ignore{
\begin{thm}Value function $v(\cdot, \cdot, \cdot)$ satisfies the following PDE equation
    \begin{eqnarray}
\frac{\partial v}{\partial t} + \sup_{u \in {U}} \left\{ \frac{\partial v}{\partial x} f(t, x, u) + \frac{\partial v}{\partial y}f(t, x, u) \mathds{1} _{\{y \le L_{mx}(t,x)\}} + L_r(t, x, u)\right\} =0 \label{Eqn_HJB}\\
\mbox{ with terminal condition } v(T, x, y) = - \sigma  \vee_{\tau, y}^{t_1} ( x) + \Psi(x). \nonumber
\end{eqnarray}
\end{thm}
   }

We  now proceed towards detailing the relaxed controls.


 \subsection{Relaxed controls and continuity of trajectories}
\label{sec:relaxed_controls}
In this section, we introduce the notion of relaxed controls and the associated relaxed trajectories, which provide a convenient framework for establishing  the existence of optimal controls (see \cite{warga2014optimal}). 
 The space of admissible controls or   the set of pure control functions is given by  (see also \eqref{Eqn_combined_problem}): 
\begin{eqnarray}
   \U := \{ \u :T \to {U} \mid   \u \mbox{ is  measurable}\}.  \label{Eqn_Controls_space}
\end{eqnarray}
The key idea is to consider    a  bigger 
  space of controls that  is compact under some appropriate topology,  and   includes   the above $\U$, after some embedding. We basically consider the measurable functions on the space of   probability measures over set~$U$, as the controls in the extended framework; recall here  $U$ is compact.

Let $\mathcal{P}(U)$ denote the space of probability measures on the control set $U$, endowed with the topology of weak convergence. That is, a sequence $\{\gam_k\} \subset \mathcal{P}(U)$ converges to $\gam \in \mathcal{P}(U)$ if and only if (iff) 
\[
\int_U g(u)\, d\gamma_k(u) \to \int_U g(u)\, d\gamma(u), \mbox{ for  every }  \g \in C(U) := \{ \h:U \to \mathbb{R}, \mbox{ continuous} \},  
\]
the space of  continuous functions on~$U$ (recall in our notations $\g= (g(t), t \in T )$). It is well-known that  $\mathcal{P}(U)$ is itself  a compact metric space under this topology (see e.g., \cite[Theorem IV.1.4]{warga2014optimal}, where rpm$(U) = \P(U)$), when $U$ is compact as in \ref{assum_a0}.

A relaxed control is defined as a measurable mapping from the time interval $T$ into the space of probability measures $\mathcal{P}(U)$. And the     space of relaxed controls is given by:
\begin{eqnarray} \hspace{8mm} \quad  
    \Sscr := \left\{ \bsig: T \to \mathcal{P}(U),  \ \muL \text{ measurable} \right\}, \  \ \muL \mbox{ is the Lebesgue measure on } T.
    \label{eqn_calS_defn}
\end{eqnarray}
The space $\Sscr$ is again endowed with the topology induced by weak convergence, which we  briefly  refer to as  \textit{r-weak topology} and is  defined as below  (see also Appendix \ref{App_relaxed_control}).
\begin{Def} [r-weak convergence]
\label{defn_relaxed_weak_conv}
\textit{A sequence  of relaxed controls $\{\bsig_k\} \subset \Sscr$ converges to $\bsig$ in~$\Sscr$, represented by $\bsig_k \rwc \bsig$,  if
\begin{eqnarray}
\label{Eqn_convergence_sigma_seq_refined}
\hspace{1mm}
\int_{0}^{t_1} \int_U g(t,u)\, \sigma_k(t,du)\, dt
\longrightarrow   \   \int_{0}^{t_1} \int_U g(t,u)\, \sigma(t,du)\, dt ,  
\end{eqnarray}
for all $\g \in \G := L^1 (T, C(U))$ where the space    $\G $ can be described as below:
\begin{eqnarray}
\label{Eqn_calG_defn} \hspace{0mm}
    \G &:=& \left  \{ \g:T \times U \to R: \g \mbox{ is measurable in $t$, continuous in $u$ and } m_\g \in L^1(T) \right  \}, 
\end{eqnarray} 
with $m_\g(t) \ := \  \sup_{u \in U} |g(t,u)|$ for each  $t \in T$.} 
\end{Def}

With slight abuse of notation,  we say  $\bsig=\u \in \U$ , when 
$\sigma(t,\cdot)=\delta_{u(t)}$, the Dirac measure,  for every~$t$.
It is a classical result  that the space $\Sscr$ is compact and metrizable (under $r$-weak topology), $\U \subset \Sscr$  (can be embedded),   and  we reproduce the same  for the sake of completion (see   Appendix~\ref{App_relaxed_control} for more details).

\begin{lem}[{\bf \cite[Chapter IV, Theorems IV.2.1 and IV.2.6]{warga2014optimal}}]
\label{lemma_space_compact} \textit{Assume \ref{assum_a0}. 
(i) Then, the space $\Sscr$ is compact, convex and sequentially compact under r-weak topology, where convergence is given by $\rwc$ as in \eqref{Eqn_convergence_sigma_seq_refined}.  (ii) Further, $\Sscr$ is metrizable. (iii) Furthermore,   $\U$ in \eqref{Eqn_Controls_space} is dense in  $\Sscr$, when one embeds $u \in U$ as Dirac measure $ \delta_u$.} 
\end{lem}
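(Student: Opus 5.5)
The plan is to view $\Sscr$ as a subset of the dual of the separable Banach space $\G = L^1(T, C(U))$, whose dual is $L^\infty(T, \mathcal{M}(U))$, and to derive every claim from that identification. Here $\mathcal{M}(U)$ is the space of signed Radon measures on $U$, and the dual identification uses the weak-$*$ measurability convention of Warga's Chapter IV.

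Each $\bsig \in \Sscr$ defines a bounded linear functional
\[ \ell_{\bsig}(\g) = \int_0^{t_1}\int_U g(t,u)\,\sigma(t,du)\,dt, \qquad |\ell_{\bsig}(\g)| \le \|m_\g\|_{L^1(T)}. \]
Hence $\Sscr$ sits inside the closed unit ball of $\G^*$, and the r-weak convergence \eqref{Eqn_convergence_sigma_seq_refined} is exactly weak-$*$ convergence on this ball.

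For (i) and (ii), the steps are as follows.
\begin{enumerate}[(a)]
\item The unit ball of $\G^*$ is weak-$*$ compact by Banach--Alaoglu.
\item Since $U$ is compact metric (\ref{assum_a0}), $C(U)$ is separable, and therefore so is $\G$. It follows that the unit ball is metrizable, for instance by $d(\bsig,\bsig')=\sum_j 2^{-j}|\ell_{\bsig}(\g_j)-\ell_{\bsig'}(\g_j)|/(1+\|\g_j\|)$ over a countable dense family $\{\g_j\}$. This gives (ii), and it also gives sequential compactness once compactness is known.
\item Convexity is immediate: the pointwise combination $\lambda\sigma(t,\cdot)+(1-\lambda)\sigma'(t,\cdot)$ is again a probability measure and is measurable in $t$.
\item It remains to show that $\Sscr$ is weak-$*$ closed in the ball. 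Suppose $\bsig_k \rwc \Lambda$. Testing with $g(t,u)=\mathbbm{1}_A(t)h(u)$, $h\ge 0$, gives $\Lambda \ge 0$, and testing with $h\equiv 1$ gives total mass $\muL(A)$. The disintegration/representation theorem for $L^\infty(T,\mathcal{M}(U))$ then yields a measurable $t\mapsto \sigma(t,\cdot)$ representing $\Lambda$. The positivity and mass identities hold for a.e.\ $t$, first for a countable dense set of $h$ and then for all $h$, so $\sigma(t,\cdot)\in\P(U)$ a.e. Redefining $\sigma$ on a null set places the limit in $\Sscr$.
\end{enumerate}
I expect this closedness/representation step to be the main obstacle. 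It relies on the duality $L^1(T,C(U))^*\cong L^\infty_{w*}(T,\mathcal{M}(U))$, and I would cite Warga's Theorem IV.1.4 and related results for it rather than reprove it.

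For (iii), the embedding $\u\mapsto\delta_{u(\cdot)}$ is measurable because $t\mapsto h(u(t))$ is measurable for every $h\in C(U)$. To prove density, fix $\bsig$, a finite family $\g_1,\dots,\g_m\in\G$, and $\epsilon>0$. The construction proceeds in stages.
\begin{enumerate}[(a)]
\item Approximate each $\g_j$ in $\G$-norm by a function that is piecewise constant in $t$ on a partition of $T$ into intervals $I_1,\dots,I_N$. This uses density of simple functions in Bochner $L^1$.
\item Approximate $\sigma(t,\cdot)$, after averaging over each $I_i$, by a finitely supported measure $\sum_l \lambda_{il}\delta_{u_{il}}$. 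This is possible by weak density of discrete measures in $\P(U)$ and compactness of $U$.
\item On each $I_i$, define the pure control $\u$ by splitting $I_i$ into consecutive subintervals of lengths $\lambda_{il}\muL(I_i)$ and setting $u\equiv u_{il}$ on the $l$-th subinterval.
\end{enumerate}
For the piecewise-constant approximants this gives exact agreement, $\ell_{\u}(\g)=\ell_{\bsig}(\g)$ up to the errors of step (b). Combining this with the errors from step (a), estimated via $\|m_\g\|_{L^1}$, gives $|\ell_{\u}(\g_j)-\ell_{\bsig}(\g_j)|<\epsilon$ for all $j$. Metrizability from (ii) then lets me extract a sequence $\u_k\rwc\bsig$.
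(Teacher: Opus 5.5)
Your argument is correct and follows the paper's route: the appendix likewise embeds $\Sscr$ in $\N\cong L^1(T,C(U))^\star$, obtains compactness from Banach--Alaoglu and metrizability from the weak norm built on a countable dense subset of the separable space $L^1(T,C(U))$, and defers closedness and the chattering-type density argument to Warga's Chapter IV, which you have written out explicitly. One small correction: the duality $L^1(T,C(U))^\star\cong\N$ used in your closedness step is Warga's Theorem IV.1.8 (as cited in the paper's appendix); IV.1.4 is the result that $\P(U)$ is a compact metric space.
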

Thus, the relaxed controls of $\Sscr$ extend the pure  controls of $\U$ by allowing the randomized selection of the actions at each time instant and    provide a compact, convex and metrizable domain or control space under the r-weak topology---this will be instrumental in  establishing the existence of solution for  \eqref{Eqn_combined_problem}-\eqref{Eqn_x_pure_traj} in the relaxed space $\Sscr$.  Further the pure  controls are dense in~$\Sscr$---this  helps in deriving $\epsilon$-optimal solutions among the original  $\U$.

Given a relaxed control $\bsig \in \Sscr$, we define the corresponding relaxed dynamics by `averaging' the system dynamics with respect to (wrt) the control measure---for the system \eqref{Eqn_combined_problem}-\eqref{Eqn_x_pure_traj},  we say
 $\x_{\bsig,x_0} = (x_{\bsig, x_0} (t), t \in T)$ is 
   the \textit{relaxed trajectory corresponding to $\bsig \in \Sscr$}, if it is
 the solution of the following ODE   with initial condition $x(0) = x_0$:
\begin{eqnarray}
\label{Eqn_relaxed_dynamics_refined}
\bdot{x}_{\bsig, x_0} (s) = f(s, x_{\bsig, x_0} (s), \sigma(s)) \ \forall s,  \mbox{ with } f(t, x, \sigma (t) ) := \int_U f(t, x ,  u)\, \sigma(t,du),   \forall t, x, \bsig. 
\end{eqnarray}
Similarly define, 
$L_r(t, x, \sigma (t)) : = \int_U L_r(t, x_\bsig(t),  u)\, \sigma(t,du)$ for any $t$.   
When $\bsig=\u \in \U$,  the solution of the relaxed ODE
\eqref{Eqn_relaxed_dynamics_refined} coincides with that of
\eqref{Eqn_x_pure_traj}. 
When there is clarity,  we drop~$\bsig$ and $x_0$ in subscript. We now consider the  control problem with relaxed controls and relaxed trajectories defined above, 
\begin{eqnarray}
\label{Eqn_combined_problem_relaxed}
  \vr (x_0)&:=& \sup_{\bsig \in \Sscr} J(x_0;\bsig ),  \mbox{   subject to \eqref{Eqn_relaxed_dynamics_refined}}, \mbox{ where, } \\
    J(x_0;\bsig ) &:=&   \int_{0}^{t_1} L_r(t, x(t), \sigma (t))dt  + \Psi(x(t_1)) -  \sup_{t \in T}  L_{mx} (t, x (t)) .  
 \nonumber
\end{eqnarray}
Note the above falls back to \eqref{Eqn_combined_problem}-\eqref{Eqn_x_pure_traj},  when the   domain $\Sscr$ is replaced with its subset~$\U$. 

 Under   assumptions \ref{assum_a0}-\ref{assum_a1}, the function $f(t,x,u)$ is continuous in $x$, measurable in $t$, and Lipschitz in $x$ uniformly over $u \in U$. Clearly, these properties are also satisfied by  the averaged vector field $f(t, x,\sigma(t))$. 
  This ensures \textit{the existence and uniqueness of   solution for relaxed ODE \eqref{Eqn_relaxed_dynamics_refined} under each $\bsig$, which ensures the problem is well defined}.

%
We next proceed towards  establishing the existence among relaxed controls.
As a first step, we further establish the compactness and continuity of relaxed trajectories, 
 which again is a direct consequence of the classical results in the theory of relaxed controls.
Towards this, we consider the joint space of relaxed trajectories, relaxed controls, and initial conditions related to \eqref{Eqn_relaxed_dynamics_refined}, as is considered in  \cite[Section V.0]{warga2014optimal} for some compact~$\I $: 
\begin{eqnarray}
\hspace{0mm}
   \A :=\big  \{ (\x,   \bsig , x_0 )  \in  C(T)  \times \Sscr  \times \I: \x = \x_{\bsig, x_0} \mbox{ is the unique solution of \eqref{Eqn_relaxed_dynamics_refined}  under} && \nonumber \\
&&\hspace{-70mm}  \bsig \mbox{ with initial condition }x(0) = x_0   \big \}.  \label{Eqn_A_set} 
\end{eqnarray}

We endow \textit{$\A  \subset \left (C(T)  \times \Sscr  \times \I \right )$, with the product topology}, where: 
\begin{itemize}
    \item $C(T)=C(T;\mathbb{R}^n)$ is equipped with the \textit{strong uniform topology}---here $\x_n \to \x$ iff  $\| \x_n - \x\|_\infty \to 0$, where  norm $\|\x\|_\infty := \sup_{s\in T} |x(s)|$;
    \item  $\Sscr$ is equipped with the \textit{r-weak topology} defined in \eqref{Eqn_convergence_sigma_seq_refined}; and
    \item  $\I \subset \mathbb{R}^n$  is a compact subset. 
\end{itemize} 
Basically, in this product topology, a sequence $(\x_n, \bsig_n, x_{0n}) $  converges to $(\x, \bsig, x_0) $ iff  
$$ 
\| \x_{n} - {\x}\|_\infty  \to  0, \ \bsig_n \rwc \bsig   \mbox{ and } x_{0n} \to x_0. 
$$ 

We now establish compactness of $\A$ and the joint stability of relaxed trajectories wrt variations in both relaxed controls and initial conditions (proof is in Appendix \ref{App_relaxed_trajectories}).
\begin{thm}[{\bf Compactness and continuity}]
\label{thm:relaxed_controls_refined}    \textit{Assume \ref{assum_a0}-\ref{assum_a1} and consider $\Sscr$ 
 defined in  \eqref{eqn_calS_defn} and $\A$ in \eqref{Eqn_relaxed_dynamics_refined}-\eqref{Eqn_A_set}.  
\begin{itemize}
    \item [(a)] The set $\A$ is sequentially compact under product topology. 
    \item [(b)]  We have continuity of relaxed trajectories in uniform topology  wrt $(\bsig, x_0) \in \Sscr \times \I$ in the following sense: 
    \begin{eqnarray}
\label{Eqn_thm1_partb}\qquad \quad \quad
  \mbox{if } \bsig_n \rwc \bsig \mbox{ and } x_{0n} \to x_0 , \mbox{ then }
  \| \x_{\bsig_n,  x_{0n}} - {\x}_{\bsig,  x_{0}}\|_\infty  \longrightarrow  0.  \hspace{7mm}  
\end{eqnarray}   
\end{itemize}}
\end{thm}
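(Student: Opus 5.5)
I will prove part (b) first and then deduce part (a) from it, together with the sequential compactness of $\Sscr$ (Lemma~\ref{lemma_space_compact}) and of $\I$.

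\textbf{Step 1: a priori bounds.} Fix $(\bsig_n, x_{0n})$ with $\bsig_n \rwc \bsig$ and $x_{0n}\to x_0$. Write $\x_n := \x_{\bsig_n,x_{0n}}$ and $\x := \x_{\bsig,x_0}$. The linear growth bound in \ref{assum_a1} passes to the averaged field, since $|f(s,x,\sigma(s))| \le K_f(1+|x|)$. Gronwall's inequality then gives
\[
\sup_n \|\x_n\|_\infty \le M := \bigl(\sup_{z\in\I}|z| + K_f t_1\bigr)e^{K_f t_1} < \infty ,
\]
and the same bound holds for $\x$. It follows that the family $\{\x_n\}$ is uniformly bounded and equi-Lipschitz, with constant $K_f(1+M)$.

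\textbf{Step 2: the error estimate.} Subtracting the integral forms of the two ODEs gives
\[
x_n(t)-x(t) = x_{0n}-x_0 + \int_0^t \bigl[f(s,x_n(s),\sigma_n(s)) - f(s,x(s),\sigma_n(s))\bigr]ds + e_n(t),
\]
where
\[
e_n(t) := \int_0^t \int_U f(s,x(s),u)\,\bigl(\sigma_n(s,du)-\sigma(s,du)\bigr)\,ds .
\]
By the Lipschitz bound in \ref{assum_a1}, the middle term is at most $K_f\int_0^t |x_n-x|\,ds$. Gronwall's inequality therefore yields
\[
\|\x_n-\x\|_\infty \le \Bigl(|x_{0n}-x_0| + \sup_{t\in T}|e_n(t)|\Bigr)e^{K_f t_1}.
\]
So it remains to show $\sup_t |e_n(t)|\to 0$.

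\textbf{Step 3: convergence of $e_n$, the main obstacle.} For each fixed $t$, take the test function $g(s,u) := \mathbbm{1}_{[0,t]}(s) f(s,x(s),u)$, applied componentwise. It is measurable in $s$ and continuous in $u$ because $f$ is continuous. It lies in $\G$ because $m_\g(s) \le K_f(1+M)$, which is integrable on the bounded interval $T$. Definition~\ref{defn_relaxed_weak_conv} then gives $e_n(t)\to 0$ pointwise in $t$.

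To upgrade this to uniform convergence, note that $|e_n(t)-e_n(t')| \le 2K_f(1+M)|t-t'|$, so $\{e_n\}$ is equi-Lipschitz on the compact interval $T$. Pointwise convergence of an equicontinuous family on a compact set is uniform, so $\sup_t|e_n(t)|\to 0$. This proves (b). The essential point is that the test functions are built from the fixed limit trajectory $\x$, not from the moving $\x_n$; that choice is what the Lipschitz splitting in Step 2 makes possible.

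\textbf{Step 4: part (a).} Take any sequence $(\x_n,\bsig_n,x_{0n})$ in $\A$. By Lemma~\ref{lemma_space_compact}, $\Sscr$ is sequentially compact, and $\I$ is compact, so along a subsequence $\bsig_{n}\rwc\bsig$ and $x_{0n}\to x_0\in\I$. Set $\x := \x_{\bsig,x_0}$, which exists and is unique under \ref{assum_a0}--\ref{assum_a1}; hence $(\x,\bsig,x_0)\in\A$. By part (b), $\|\x_n-\x\|_\infty\to 0$ along this subsequence. Thus the subsequence converges in $\A$ in the product topology, which proves sequential compactness.
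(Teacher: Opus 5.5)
Your proof is correct, but it runs in the opposite order from the paper's and uses different tools.

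\textbf{The paper's route.} The paper proves (a) first. It bounds all relaxed trajectories by the comparison solution $(1+|x_0|)e^{K_f t}-1$ and deduces equicontinuity of the projected trajectory set $\mathcal{T}$. It then applies Arzel\`a--Ascoli and shows that $\mathcal{T}$ and $\A$ are closed by passing to the limit in the integral equation; this uses the same split into a Lipschitz part and an r-weakly convergent part that you use in Step 2. Only then does it obtain (b), from Berge's Maximum theorem applied to the residual functional $\Upsilon(\x;\bsig,x_0)=\int_0^{t_1}\bigl(x(t)-x_0-\int_0^t f(s,x(s),\sigma(s))\,ds\bigr)^2\,dt$ on the compact set $\mathcal{T}$. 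Since $\Upsilon$ has the unique minimizer $\x_{\bsig,x_0}$, the argmin map is continuous.

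\textbf{Your route.} You prove (b) directly by a Gronwall estimate. The only non-Lipschitz contribution is isolated in $e_n$, which is tested against the fixed limit trajectory. Pointwise convergence of $e_n$ is then upgraded to uniform convergence by equi-Lipschitz continuity. Part (a) follows in a few lines from sequential compactness of $\Sscr\times\I$, and compactness of $\mathcal{T}$ comes for free as a sequentially continuous image.

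\textbf{What each buys.} Your version gives the explicit quantitative bound $\|\x_n-\x\|_\infty\le\bigl(|x_{0n}-x_0|+\sup_t|e_n(t)|\bigr)e^{K_f t_1}$ and needs neither Arzel\`a--Ascoli nor the Maximum theorem. It also uses r-weak convergence only in its integrated form, through the test function with the indicator $[0,t]$. In particular, you never need pointwise-in-$s$ convergence of $\sigma_n(s,\cdot)$, which r-weak convergence does not provide, although one bullet of the paper's closedness argument asserts it. Moreover, checking joint continuity of $\Upsilon$ requires essentially the same dominated-convergence work as your Step 3, so the Maximum theorem does not really save effort. The paper's route mainly buys thematic uniformity: the same Berge-type parametric-continuity argument is reused later to pass from the smooth problems to the original one.
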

The above results are also well known (e.g., see \cite{warga2014optimal}), we reproduce them here again for the sake of completion; further we provide a simple proof of part (b) using Maximum theorem---more specifically using the parametric continuity provided by Maximum theorem (see \cite{berge1877topological}). 
We 
next establish the continuity of the objective function in \eqref{Eqn_combined_problem_relaxed} as final step towards establishing existence.

\subsection{Continuity of max-trajectories and objective function} For a given initial condition $x_0$ and relaxed control $\bsig$, the `running max'-trajectory $\y_{\bsig, x_0}  = (y_{\bsig, x_0}(t),\, t \in T)$ is defined  point-wise as 
\begin{eqnarray}
   \hspace{8mm} y_{\bsig, x_0}(t) := \sup_{s \in [0, t]} L_{mx}(s, x_{\bsig, x_0}(s)), \  \mbox{ for all } t \in T. 
    \label{Eqn_y_defn}
\end{eqnarray}
 \begin{wrapfigure}{r}{0.33\textwidth}
\begin{center}
\vspace{-12mm}
\includegraphics[trim = {1.15cm 11.0cm 0cm 1.6cm}, clip, width=4.7cm, height=3.8cm]{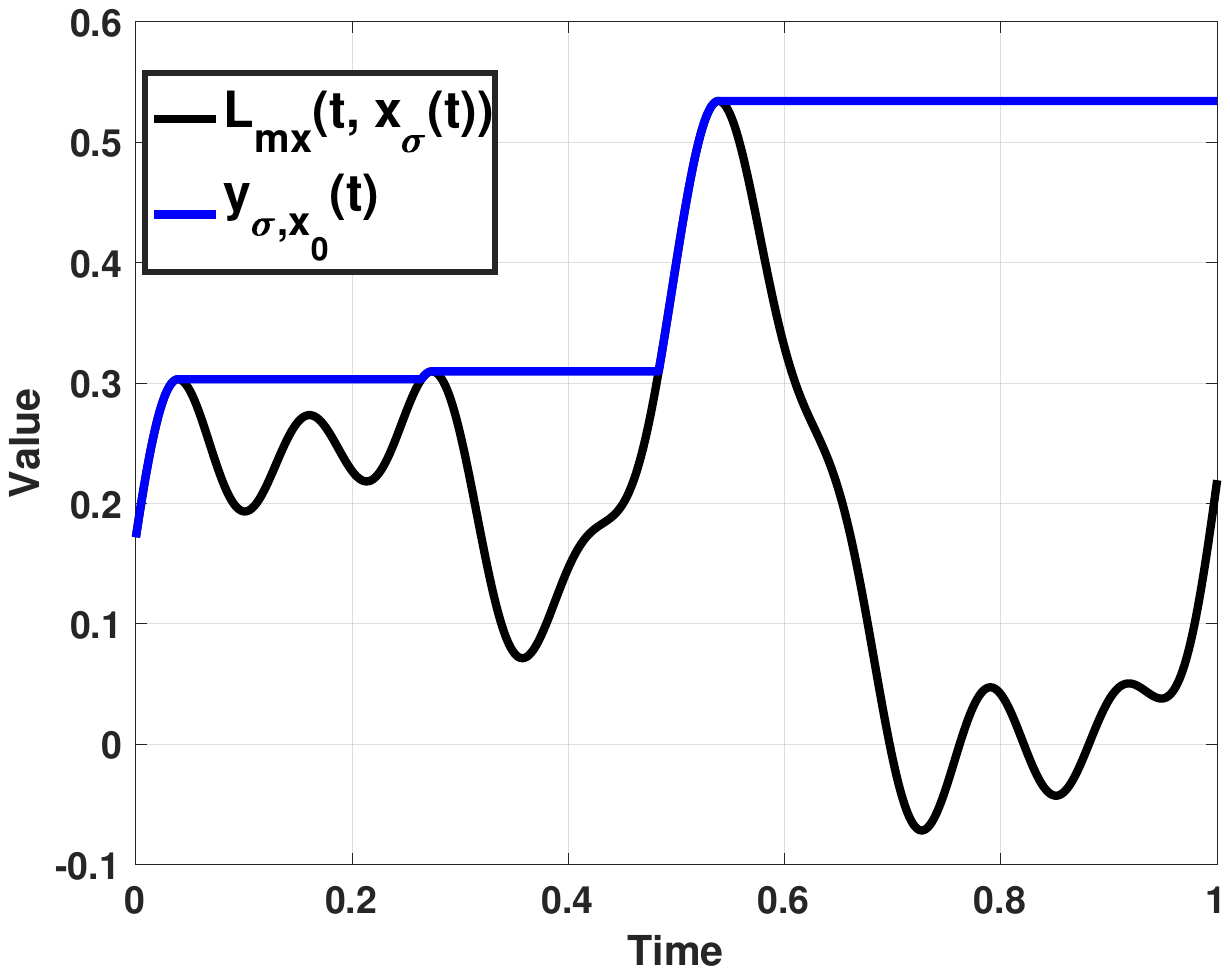}
 \vspace{-7.5mm}
\caption{$L_{mx}, y$ trajectories}
\label{Fig_x_y}
\vspace{-10.5mm}
\end{center}
\end{wrapfigure}
The value of this  function $ y_{\bsig, x_0}(t)$ captures the maximum running cost   up to   time $t$ and  $y_{\bsig, x_0}(t_1) $  precisely equals the $L^\infty$-term of  the combined objective~\eqref{Eqn_combined_problem}. We first establish the continuity of the mapping $(\bsig, x_0) \mapsto \y_{\bsig, x_0}$,  then that of the objective function of \eqref{Eqn_combined_problem_relaxed} and finally prove   the existence of an optimizer among relaxed controls (proof is in Appendix~\ref{App_thm3}). 
\begin{thm}[{\bf Optimal relaxed control}]  
\label{Thm_existence_relx_cont} \textit{Assume \ref{assum_a0}-\ref{assum_a2}, \ref{assum_a5} and consider   problem \eqref{Eqn_combined_problem_relaxed}. 
  \begin{itemize}
    \item [(a)]If $\bsig_n \rwc \bsig \mbox{ and } x_{0n} \to x_0 $, 
  then
\begin{itemize}
    \item[(i)] $\displaystyle \int_0^{t_1}  L_r(t, x_{\bsig_n,x_{0n}}(t),   \sigma_n (t))dt \longrightarrow      \int_0^{t_1}  L_r(t, x_{\bsig, x_0}(t), \sigma (t))dt $, and, 
   \medskip
    \item[(ii)] $\displaystyle  \left \|\y_{\bsig_n,x_{0n}}   - \y_{\bsig, x_0}   \right \|_\infty \longrightarrow   0 $.
\end{itemize}
  \item [(b)]   Moreover, there exists an optimal relaxed control  for \eqref{Eqn_combined_problem_relaxed}.
  \end{itemize}}
    \end{thm}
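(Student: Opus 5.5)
The plan is to derive everything from Theorem~\ref{thm:relaxed_controls_refined}(b). Fix $\bsig_n \rwc \bsig$ and $x_{0n}\to x_0$ with $x_{0n},x_0\in\I$. Write $\x_n := \x_{\bsig_n,x_{0n}}$ and $\x := \x_{\bsig,x_0}$; that theorem gives $\|\x_n-\x\|_\infty\to 0$. Gronwall with the linear growth in \ref{assum_a1} also gives a uniform bound $\sup_n\|\x_n\|_\infty\le M<\infty$, since $\I$ is compact.

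For part (a)(i) I will split the integrand as
\[
\int_0^{t_1}\!\!\int_U \big[L_r(t,x_n(t),u)-L_r(t,x(t),u)\big]\sigma_n(t,du)\,dt
+\int_0^{t_1}\!\!\int_U L_r(t,x(t),u)\,\sigma_n(t,du)\,dt .
\]
The first term is bounded by $K_r t_1\|\x_n-\x\|_\infty\to0$, using the Lipschitz property in \ref{assum_a5}. For the second term, set $g(t,u):=L_r(t,x(t),u)$. This $g$ is measurable in $t$ and continuous in $u$ by \ref{assum_a2}, and $m_\g(t)\le K_r(1+M)$, so $\g\in\G$ as in \eqref{Eqn_calG_defn}. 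The definition of $\rwc$ in \eqref{Eqn_convergence_sigma_seq_refined} then gives convergence to $\int_0^{t_1} L_r(t,x(t),\sigma(t))\,dt$.

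For part (a)(ii) I will use the elementary inequality $|\sup_A a-\sup_A b|\le\sup_A|a-b|$. For each $t$ it gives
\[
|y_{\bsig_n,x_{0n}}(t)-y_{\bsig,x_0}(t)|\le\sup_{s\in[0,t]}|L_{mx}(s,x_n(s))-L_{mx}(s,x(s))|\le K_L\|\x_n-\x\|_\infty ,
\]
by \ref{assum_a2}. Taking the supremum over $t$ yields uniform convergence; the suprema are finite since the functions involved are continuous on compact sets.

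For part (b) I will fix $x_0$, taking $\I=\{x_0\}$ or any compact set containing it. The first step is to show that $\bsig\mapsto J(x_0;\bsig)$ is sequentially continuous on $\Sscr$. The integral term is continuous by (a)(i). The terminal term $\Psi(x_n(t_1))\to\Psi(x(t_1))$ by continuity of $\Psi$ and uniform convergence. The $L^\infty$ term equals $y(t_1)$, which converges by (a)(ii). By Lemma~\ref{lemma_space_compact}, $\Sscr$ is sequentially compact and metrizable. I will take a maximizing sequence $\bsig_k$ with $J(x_0;\bsig_k)\to\vr(x_0)$ and extract a subsequence $\bsig_k\rwc\bsig^*$. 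Continuity then gives $J(x_0;\bsig^*)=\vr(x_0)$. This also shows $\vr(x_0)<\infty$: a continuous function on a compact metric space is bounded, or one can bound $J$ directly using $M$.

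The main technical point is the membership $\g\in\G$ in part (a)(i), in particular the joint measurability of $g(t,u)=L_r(t,x(t),u)$. I expect it follows because $g$ is continuous in $(t,u)$: $L_r$ is continuous and $\x$ is continuous. So the step should be routine, but it is the only place where the r-weak topology is used directly rather than through Theorem~\ref{thm:relaxed_controls_refined}.
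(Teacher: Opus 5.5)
Your proposal is correct and follows essentially the paper's own argument. For (a)(i) you use the same two-term split: a Lipschitz/dominated term plus an r-weak limit with $g(t,u)=L_r(t,x(t),u)\in\G$. For (a)(ii) you use the same Lipschitz-plus-uniform-convergence bound on the running maxima. For (b) you use a Weierstrass-type compactness argument, phrased via sequential compactness of $\Sscr$ and a maximizing sequence, whereas the paper phrases it via compactness of $\A$ from Theorem~\ref{thm:relaxed_controls_refined}(a). Your inequality $|\sup_A a-\sup_A b|\le\sup_A|a-b|$ in (a)(ii) is slightly cleaner than the paper's maximizer-point argument, because it yields both directions at once, while the paper writes out only $y_n(t)-y(t)<\epsilon$.
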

Part $(a).(i)$  follows,  for example, using the   results  of  \cite{warga2014optimal} and Dominant convergence theorem, while part $(a).(ii)$ is the first substantial result of this paper and  is instrumental in establishing the existence of part $(b)$ and in further   proving the main results. 

We also establish the continuity of the trajectories and the overall objective function wrt the initial condition \(x_0\). This result is of independent interest\footnote{One can prove the continuity of the value function $\vr (x_0, \delta=0)$ wrt initial condition $x_0$, in exactly similar lines---we have already established the continuity of $\x, \y$ and $L_r$  wrt   $x_0$ in Theorems \ref{thm:relaxed_controls_refined}-\ref{Thm_existence_relx_cont}.}, as it is useful for studying the stability of optimal solutions and may facilitate future study on the dynamic programming principle for problems of the form \eqref{Eqn_combined_problem}-\eqref{Eqn_x_pure_traj}.

\textit{We now present the first main result of this paper, which is an immediate corollary of the continuity results of Theorems \ref{thm:relaxed_controls_refined}-\ref{Thm_existence_relx_cont} and denseness of Lemma~\ref{lemma_space_compact}}:

\begin{cor}
\label{Corollary}\textit{Assume \ref{assum_a0}-\ref{assum_a2} and \ref{assum_a5}. 
For  any $\epsilon>0$, 
    there exist an
     $\u^\epsilon \in \U$, which is 
    $\epsilon$-optimal  for the original problem \eqref{Eqn_combined_problem}-\eqref{Eqn_x_pure_traj}. }
\end{cor}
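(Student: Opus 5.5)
We combine the existence of an optimal relaxed control (Theorem \ref{Thm_existence_relx_cont}(b)), continuity of $\bsig \mapsto J(x_0;\bsig)$ in the r-weak topology (Theorems \ref{thm:relaxed_controls_refined}(b) and \ref{Thm_existence_relx_cont}(a)), and density of $\U$ in $\Sscr$ (Lemma \ref{lemma_space_compact}(iii)). Fix $x_0$ and take $\I=\{x_0\}$. Let $\bsig^*\in\Sscr$ be an optimal relaxed control, so $J(x_0;\bsig^*)=\vr(x_0)$. Since $\U\subset\Sscr$ via Dirac embedding, and the relaxed trajectory and objective reduce to the pure ones when $\bsig=\u$, we have $\vp(x_0)\le \vr(x_0)<\infty$.

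First, continuity of $J(x_0;\cdot)$ at $\bsig^*$. Take any sequence $\bsig_n\rwc\bsig^*$. Theorem \ref{Thm_existence_relx_cont}(a)(i) gives convergence of the integral term. Theorem \ref{thm:relaxed_controls_refined}(b) gives $\|\x_{\bsig_n,x_0}-\x_{\bsig^*,x_0}\|_\infty\to0$, so $\Psi(x_{\bsig_n}(t_1))\to\Psi(x_{\bsig^*}(t_1))$ by continuity of $\Psi$ (\ref{assum_a2}). Theorem \ref{Thm_existence_relx_cont}(a)(ii) gives $y_{\bsig_n,x_0}(t_1)\to y_{\bsig^*,x_0}(t_1)$, and this is exactly the supremum term. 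Adding the three pieces, $J(x_0;\bsig_n)\to J(x_0;\bsig^*)$.

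Second, density. Since $\Sscr$ is metrizable (Lemma \ref{lemma_space_compact}(ii)) and $\U$ is dense in it, there is a sequence $\u_n\in\U$ with $\u_n\rwc\bsig^*$. Metrizability is what lets us pass from topological density to a sequence, which is needed because the continuity results are stated sequentially. By the first step, $J(x_0;\u_n)\to \vr(x_0)$. Hence, for a given $\epsilon>0$, there is $N$ with $J(x_0;\u_N)\ge \vr(x_0)-\epsilon\ge \vp(x_0)-\epsilon$. Setting $\u^\epsilon:=\u_N$ gives the claim. As a by-product, $\vp(x_0)=\vr(x_0)$.

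The step needing the most care is the first: checking that the relaxed objective evaluated at a pure control $\u_n$ coincides with the original $J$ in \eqref{Eqn_combined_problem}, and that every term converges along the same sequence. All of this is supplied by the earlier theorems, in particular part (a)(ii) for the non-smooth supremum term. We also need $\vr(x_0)$ finite, so that $\epsilon$-optimality is meaningful. This follows from the linear-growth bounds in \ref{assum_a1} and \ref{assum_a5}: by Gronwall, trajectories starting at $x_0$ are uniformly bounded over all $\bsig\in\Sscr$, so $J$ is bounded above and below, and in any case attainment at $\bsig^*$ already gives a finite value.
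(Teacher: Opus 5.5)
Your proposal is correct and follows essentially the same route as the paper, which presents this corollary as an immediate consequence of three ingredients: the sequential continuity of $\bsig \mapsto J(x_0;\bsig)$ from Theorems \ref{thm:relaxed_controls_refined}--\ref{Thm_existence_relx_cont}, the existence of an optimal relaxed control, and the density of $\U$ in the metrizable space $\Sscr$ (Lemma \ref{lemma_space_compact}). The paper writes out the same approximation step explicitly in its proof of Lemma \ref{Lem_exist_optimal_control}(ii), and your version matches it, including the use of metrizability to extract an approximating sequence $\u_n \rwc \bsig^*$.
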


We  have thus established the existence of $\epsilon$-optimal policies in $\U$. However,   computing these  policies can still be challenging---observe here    the $\y$ trajectory  \eqref{Eqn_y_defn} need not be smooth (need not be differentiable) and that the problem \eqref{Eqn_combined_problem}-\eqref{Eqn_x_pure_traj} is not in standard framework for which numerical solutions are known---as of now, it is not even clear whether the dynamic programming principle holds.
Our next focus is on deriving a numerical procedure to compute the solutions, and the idea is to approach
this  by constructing some smooth control problems  that can `approximate' \eqref{Eqn_combined_problem}-\eqref{Eqn_x_pure_traj}.

Towards this,  we  approximate the non-smooth
$\y$ trajectory with a smooth trajectory, which leads to  a `smooth' problem in standard control framework  and  hence   with computable solution. We then establish
some form of parametric continuity  that links  the  solutions  of `smooth' problems with that corresponding to original \eqref{Eqn_combined_problem}-\eqref{Eqn_x_pure_traj}, once again using the  Maximum theorem---thereby establishing  that 
the solutions of `smooth' problems  become $\epsilon$-optimal for \eqref{Eqn_combined_problem}-\eqref{Eqn_x_pure_traj}. 
We begin with introducing the smooth problem(s) that approximate~\eqref{Eqn_combined_problem}-\eqref{Eqn_x_pure_traj}. 
For this set of results, we consider fixed  initial condition $x_0$ and 
again employ the relaxed control framework to achieve our goal.

\section{Approximation by smooth control problems}   
\label{sec_approximation_by_smooth}
The max-trajectory defined in \eqref{Eqn_y_defn} enables rewriting the $L^\infty$-term  in \eqref{Eqn_combined_problem_relaxed}, $\sup_{s \in T} L_{mx}(s, x_\bsig (s))$,  as a terminal cost, since this term  precisely equals $y_{\bsig}(t_1)$. Let the modified objective function be defined as below:
\begin{eqnarray}
    &&W(\x_\bsig,  \y_\bsig; \bsig) := \int_{0}^{t_1} L_r(t, x_\bsig(t), \sigma (t))dt  + \Psi(x_\bsig(t_1)) -  y_\bsig(t_1). \label{eqn_obj_fun_with_y}
\end{eqnarray}   
Intuitively, the dynamics of this max-trajectory can be viewed as the solution of the  ODE,
\begin{eqnarray}
\label{Eqn_y_dynamic_compact}
&&\hspace{-5mm}\bdot{y}_\bsig(s)
= \bigg ( \Delta_{mx} (s, x_\bsig (s), \sigma(s) )  \bigg  )^+  \mathds{1}_{\{L_{mx}(s, x_{\bsig}(s)) \ge y_{\bsig}(s)\}},   \mbox{where }    \Delta_{mx} \mbox{ is defined in } \eqref{Eqn_Delta_mx}, 
\end{eqnarray}
due to the following reasons.  
The above dynamics imply that $\y_\bsig$ is non-decreasing and evolves   for $s \in [s_1, s_2] \subset [0, t_1]$, only  when the trajectory $L_{mx}(s,x_\bsig(s))$ meets the current level $y_\bsig(s)$ with a positive derivative i.e, $\Delta_{mx} (s, x_\bsig(s), \sigma(s)) >0$. In this regime, $y_\bsig(s)$ tracks $L_{mx}(s,x_\bsig(s))$ exactly. If for some time $s$,  $\dt{L}_{mx}(s,x_\bsig(s)) < 0$, then $L_{mx}(s,x_\bsig(s))$ decreases below $y_\bsig(s)$ and $\y_\bsig$ remains constant until $L_{mx}(s,x_\bsig(s))$ increases again and catches up $y_\bsig (s)$. Hence, $\y_\bsig$  the solution of \eqref{Eqn_y_dynamic_compact} (when exists),  effectively captures the running maximum of $L_{mx}(s,x_\bsig (s))$ or the max-trajectory \eqref{Eqn_y_defn}, when further the initial conditions are equal,  $x_\bsig(0) = y_\bsig(0)$ (see Figure~\ref{Fig_x_y} also).



However, a solution to ODE \eqref{Eqn_y_dynamic_compact} need not exist in general.  
Although the max-trajectory~$\y$ in~\eqref{Eqn_y_defn}  exists for any admissible control   $\bsig \in \Sscr$, due to the well-posedness of the $\x$-dynamics, it is generally non-smooth (may not be differentiable)---see the discontinuity in   the right-hand side  of the corresponding ODE~\eqref{Eqn_y_dynamic_compact}. In particular, indicator  $\mathds{1}_{\{L_{mx}(s,x_\bsig (s))\ge y_\bsig (s)\}}$ induces a state-dependent discontinuity, making the direct analysis  of the control problem in \eqref{Eqn_final_problem_compact} significantly more involved---such analysis typically requires handling of Filippov   solutions, defined via differential inclusions (see e.g., \cite{barles2013bellman, filippov2013differential, rao2013hamilton}).
Rather, we replace the ODE \eqref{Eqn_y_dynamic_compact} with a `smooth approximation version' and show that the solutions of the resulting smooth-control problems provide approximate solutions to the original problem \eqref{Eqn_combined_problem}-\eqref{Eqn_x_pure_traj}.
In \cite{dhiman2024optimal}, we studied a class of  control problems, and we now show that their solutions approximate the current problem \eqref{Eqn_combined_problem}-\eqref{Eqn_x_pure_traj}. In the immediate next, we in fact propose   a more general class of smooth   problems that can approximate. 

\subsection{The smooth control problems}
\label{subsec_smooth_approx}
The idea  is to smooth the max-trajectory~\eqref{Eqn_y_defn} by 
replacing  indicator $\mathds{1}_{\{L_{mx}(s,x_\bsig(s))\ge y_\bsig(s) \}}$  in \eqref{Eqn_y_dynamic_compact} with a  continuous  $\psi_\delta$, yielding
\begin{eqnarray}
\bdot{y}_\bsig^\delta(s) &=& \big ( \Delta_{mx} (s, x_\bsig (s),\sigma(s) )  \big  )^+  \hspace{-1mm}\psi_\delta(L_{mx} (s,x_\bsig(s))-y_\bsig^\delta (s))  , \mbox{ where }\nonumber\\
 \psi_\delta(d) &:=& \phi \left(1 + \frac{d}{\delta}\right)\mathds{1}_{\{d \in [-\delta,0]\}} + \mathds{1}_{\{d>0\}}, \label{eq:ydelta} 
\end{eqnarray}
 \begin{wrapfigure}{r}{0.27\textwidth}
\begin{center}
\vspace{-12mm}
\includegraphics[trim = {0.5cm 14.0cm 1.5cm 1.6cm}, clip, width=4.3cm, height=4.1cm]{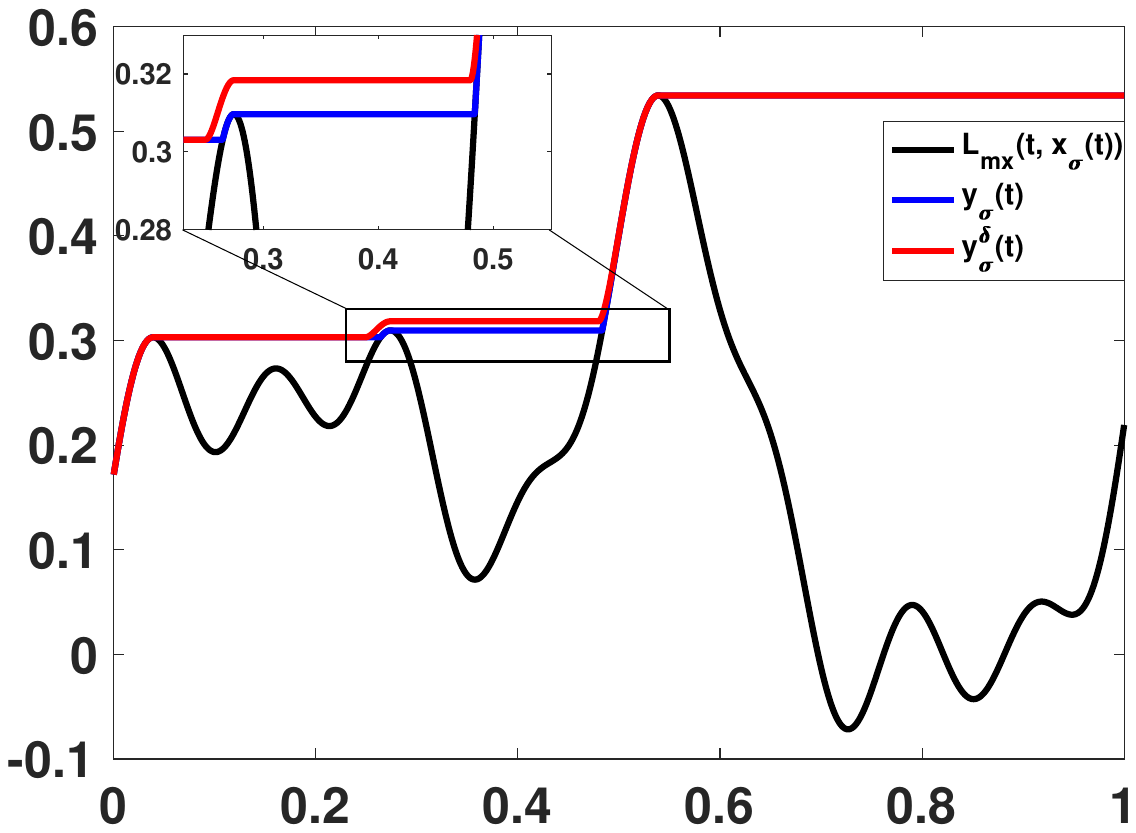}
 \vspace{-5mm}
\caption{$L_{mx}, \y_\bsig, \y_\bsig^\delta $ trajectories}
\label{Fig_y_delta}
\vspace{-2mm}
\end{center}
\end{wrapfigure} 
using  \textit{any Lipschitz continuous increasing function $\phi: [0,1] \to [0,1]$  satisfying, $\phi(0) =0$ and $\phi(1) =~1$.} This approximation (e.g., \cite{fleming2006controlled}) ensures Lipschitz continuity of the $\y$-ODE for any $\delta>0$, provided $\Delta_{mx}$ is sufficiently smooth, thereby enabling tractable analysis of the associated control problem. 

Now, we propose the   smooth problems  parametrized by $\delta > 0$, which can `approximate' ($W$ is defined in \eqref{eqn_obj_fun_with_y}): 
\begin{eqnarray}
&&\vr^\delta = \sup_{\bsig \in \Sscr} W(\x_\bsig,  \y_\bsig^\delta; \bsig) \quad \text{subject to }  \eqref{Eqn_relaxed_dynamics_refined} \mbox{ and } \eqref{eq:ydelta}, \mbox{ with } y_\bsig^\delta(0) = x_\bsig(0)=x_0. 
\label{Eqn_final_problem_compact_relaxed}
\end{eqnarray}
We first prove that a pure control in $\U$ optimizes the above smooth  problem:
\begin{lem}
\label{Lem_exist_optimal_control}
\textit{Fix any $\delta > 0$. Assume \ref{assum_a0}-\ref{assum_a3}.  Then the following are true for smooth-approximate control problem \eqref{Eqn_final_problem_compact_relaxed}. \\
(i)  The joint ODE 
\eqref{Eqn_relaxed_dynamics_refined}-\eqref{eq:ydelta}    has unique solution,  represented by   relaxed trajectories $(\x_\bsig, \y^\delta_\bsig)$, for any  $(\bsig, \delta)$. Further, 
$
\| (\x_{\bsig_n}, \y_{\bsig_n}^\delta) - (\x_{\bsig}, \y_{\bsig}^\delta) \|_\infty \to 0,$ for any sequence,  $\bsig_n \rwc \bsig$.\\
(ii) Additionally assume \ref{assum_a4}-\ref{assum_a5}. Then there exists an optimal solution $\u^*_\delta \in \U $ for the following counterpart of \eqref{Eqn_final_problem_compact_relaxed}, that optimizes   among the  pure controls: 
\begin{eqnarray}
&&\vp^\delta = \sup_{\u \in \U} W(\x_\u,  \y_\u^\delta; \u) \quad \text{subject to }  \eqref{Eqn_relaxed_dynamics_refined} \mbox{ and } \eqref{eq:ydelta}, \mbox{ with } y_\u^\delta(0) = x_\u(0)=x_0. 
\label{Eqn_final_problem_compact}
\end{eqnarray}
Moreover, $\u^*_\delta$ is also optimal for the relaxed smooth-approximate problem \eqref{Eqn_final_problem_compact_relaxed}}.
\end{lem}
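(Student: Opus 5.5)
For part (i), I would treat $(\x_\bsig,\y^\delta_\bsig)$ as the solution of a single ODE in $\mathbb{R}^{n+1}$ whose right-hand side is measurable in $t$ (through $\sigma(t)$) and Lipschitz in $(x,y)$ uniformly in $t$. The $x$-component is handled exactly as for \eqref{Eqn_relaxed_dynamics_refined}, using \ref{assum_a1}.

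For the $y$-component, the first step is to note that $\psi_\delta$ is Lipschitz with constant $K_\phi/\delta$, where $K_\phi$ is the Lipschitz constant of $\phi$. This holds because $\phi(0)=0$ and $\phi(1)=1$ make $\psi_\delta$ continuous at $d=-\delta$ and at $d=0$. Also $0\le\psi_\delta\le 1$.

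The second step is that $d\mapsto d^+$ is $1$-Lipschitz. Hence, by \ref{assum_a3} and integration against $\sigma(s,du)$, the averaged term $(\Delta_{mx}(s,x,\sigma(s)))^+$ is Lipschitz in $x$ with constant $K_\Delta$ and bounded by $K_\Delta(1+|x|)$. Here I read \eqref{eq:ydelta} with $\Delta_{mx}(s,x,\sigma(s)) := \int\Delta_{mx}(s,x,u)\sigma(s,du)$.

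The product of a bounded Lipschitz factor and a linearly growing Lipschitz factor is only locally Lipschitz. I would handle this in either of two ways. One is to observe that $\x_\bsig$ is a priori bounded on $T$: Gronwall with \ref{assum_a1} gives $\|\x_\bsig\|_\infty\le C$, uniformly in $\bsig$. The other is to solve the $x$-equation first and then view the $y$-equation as a scalar ODE in $y$ alone, with $x(\cdot)$ given. Its right-hand side is measurable in $s$, bounded by $K_\Delta(1+C)$, and Lipschitz in $y$ with constant $K_\Delta(1+C)K_\phi/\delta$ through $\psi_\delta$ and \ref{assum_a2}. Carathéodory's theorem then gives existence and uniqueness on all of $T$.

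For continuity, let $\bsig_n\rwc\bsig$. Theorem~\ref{thm:relaxed_controls_refined}(b) gives $\|\x_{\bsig_n}-\x_\bsig\|_\infty\to0$. Writing $e_n(t)=|y^\delta_{\bsig_n}(t)-y^\delta_\bsig(t)|$, I would split the difference of the integrands into three pieces. The first is the change in $x$, which is Lipschitz and so controlled by $\|\x_{\bsig_n}-\x_\bsig\|_\infty$. The second is the change in $y$, which is Lipschitz and controlled by $e_n$. The third is the term
\[
\int_0^t\Big[(\Delta_{mx}(s,x_\bsig(s),\sigma_n(s)))^+-(\Delta_{mx}(s,x_\bsig(s),\sigma(s)))^+\Big]\psi_\delta(\cdot)\,ds .
\]
This third term is the main obstacle, because $(\cdot)^+$ is applied after averaging over $\sigma_n$. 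The map $\sigma\mapsto(\int g\,d\sigma)^+$ is not linear, so r-weak convergence alone does not force the integrals to converge. For pure controls, or if $(\cdot)^+$ is taken inside the integral, it is an integral of a function in $\G$, namely $g(s,u)=(\Delta_{mx}(s,x_\bsig(s),u))^+\psi_\delta(L_{mx}(s,x_\bsig(s))-y^\delta_\bsig(s))$. In that case it converges to $0$ by \eqref{Eqn_convergence_sigma_seq_refined}, uniformly in $t$ by the equicontinuity argument of \cite{warga2014optimal}, applied by testing with $g\mathds{1}_{[0,t]}$ and using the uniform integrable bound. I would therefore adopt the reading that in the relaxed dynamics \eqref{eq:ydelta} the averaging is over $(\Delta_{mx})^+\psi_\delta$, which agrees with \eqref{eq:ydelta} for Dirac controls. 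Gronwall then yields $\sup_t e_n(t)\to0$.

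For part (ii), the map $\bsig\mapsto W(\x_\bsig,\y^\delta_\bsig;\bsig)$ is continuous on $\Sscr$. This follows from part (i), the continuity of $\Psi$, and Theorem~\ref{Thm_existence_relx_cont}(a)(i) for the running term. Since $\Sscr$ is compact by Lemma~\ref{lemma_space_compact}, a relaxed maximizer $\bsig^*$ exists.

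To obtain a pure optimizer, I would apply the Filippov/Roxin convexity argument to the augmented system
\[
F(s,(x,y,z),u)=\Big(f,\ (\Delta_{mx})^+\psi_\delta,\ L_r\Big).
\]
The relaxed velocity $\int F\,\sigma^*(s,du)$ lies in $\overline{\mathrm{co}}\,F(s,\cdot,U)$, and I would use measurable selection (Filippov's lemma) to choose $u^*(s)$ with $F(s,\cdot,u^*(s))$ equal to or dominating this velocity. This requires convexity of $F(s,\cdot,U)$, or at least of its hypograph in the $(y,z)$ coordinates. \ref{assum_a4} gives convexity only in the $f$-coordinate, which is a second obstacle. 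What I would try first is the standard device behind \cite{roxin1962existence}: take an upper convex envelope in the reward coordinate and show that the selected pure control achieves the relaxed objective. The resulting $\u^*_\delta$ satisfies $\vp^\delta\ge W(\u^*_\delta)\ge\vr^\delta\ge\vp^\delta$. Hence $\u^*_\delta$ is optimal both in $\U$ and in $\Sscr$.
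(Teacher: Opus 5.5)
Part (i) is sound and is essentially the paper's argument. The paper clips $x$ to $|x|\le\bar z$, so that the augmented field $(f,F)$ satisfies \ref{assum_a1}, and then applies Theorem~\ref{thm:relaxed_controls_refined}(b) to the state $(x,y)$; your Gronwall split does the same thing by hand. Your reading of the relaxed $y$-dynamics as $\int F(s,x,y,u)\,\sigma(s,du)$ is exactly the one that application implicitly uses.

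\textbf{The gap.} It is in the last step of part (ii). The ``upper convex envelope'' device does not produce a pure control attaining $v_r^\delta$, and no argument can do so from \ref{assum_a0}--\ref{assum_a5} alone. Take
\begin{itemize}
\item $n=1$, $U=[-1,1]$, $f=u$;
\item $L_{mx}\equiv0$, so $\Delta_{mx}\equiv0$ and $y^\delta\equiv x_0$;
\item $\Psi\equiv0$, $x_0=0$, $t_1=1$, and $L_r(s,x,u)=-|x|-(1-u^2)^2$.
\end{itemize}
All of \ref{assum_a0}--\ref{assum_a5} hold. The constant relaxed control putting mass $\tfrac12$ on each of $u=\pm1$ gives $x\equiv0$ and $W=0=v_r^\delta$. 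Yet $W<0$ for every pure control: $W=0$ would force $x\equiv0$ and $|u|=1$ a.e., while $x\equiv0$ forces $u=\dot x=0$ a.e. This is the classical Lagrange-problem non-existence example. So part (ii) needs an extra Cesari-type hypothesis, for instance convexity of $\{(z^0,\xi): z^0\le L_r(s,x,u),\ \xi=f(s,x,u),\ u\in U\}$ for every $(s,x)$.

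\textbf{The $y$-coordinate is not the obstruction.} You have placed the difficulty in the wrong coordinate. For $y$ the relevant set is an epigraph, because a larger $\dot y$ never helps, and that epigraph is automatically convex. Write $F(s,x,y,u)=\bigl(a(s,x)+b(s,x)\cdot f(s,x,u)\bigr)^+\psi_\delta(L_{mx}(s,x)-y)$ with $a=\partial_sL_{mx}$ and $b=\partial_xL_{mx}$. Then $F$ depends on $u$ only through $f$ and is convex in it. Under the hypothesis above, the argument closes as follows:
\begin{itemize}
\item Filippov's lemma gives a measurable $u^*(s)$ with $f(s,x^*(s),u^*(s))=\int f\,d\sigma^*$ and $L_r(s,x^*(s),u^*(s))\ge\int L_r\,d\sigma^*$, so $u^*$ reproduces $x^*$.
\item Jensen gives $F(s,x^*(s),y,u^*(s))\le\int F(s,x^*(s),y,u)\,\sigma^*(s,du)$ for every $y$.
\item Scalar comparison applies because the $y$-equation is Lipschitz in $y$. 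It gives $y^\delta_{u^*}\le y^\delta_{\sigma^*}$, which only helps since $-y(t_1)$ enters $W$.
\end{itemize}
Hence $W(u^*)\ge v_r^\delta$, and your purification route closes.

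\textbf{Comparison with the paper.} The paper instead augments the state with $z=\int L_r$ and invokes Roxin's theorem. It checks convexity of $f(s,x,U)$ and of $\{F(s,x,y,u):u\in U\}$ separately. Roxin's theorem needs convexity of the joint set $\{(f,F,L_r)(s,x,y,u):u\in U\}$, which fails in general. With $f=u$ and $L_{mx}=x$, the pairs $(u,u^+\psi_\delta(x-y))$ already trace a bent segment. Nothing in \ref{assum_a5} supplies convexity in the $L_r$-coordinate either, as the example shows. So the paper's proof has the same hole. Your Jensen-plus-comparison treatment of $y$ is what makes the result correct once convexity in the $L_r$-coordinate is assumed.
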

\textbf{Proof:}  We   prove the existence of unique solution for joint ODE \eqref{Eqn_relaxed_dynamics_refined}-\eqref{eq:ydelta} and 
establish  the   continuity result of part (i), simultaneously       using Theorem \ref{thm:relaxed_controls_refined}.(b). 
Towards this, it suffices to show that the joint ODE \eqref{Eqn_relaxed_dynamics_refined}-\eqref{eq:ydelta} also satisfies the assumptions 
\ref{assum_a0}-\ref{assum_a1} required by the theorem. 
%
The  
$\x_\bsig$ components already satisfy the required conditions by \ref{assum_a1}
and hence Theorem \ref{thm:relaxed_controls_refined}.(b) is applicable to joint $(\x_\bsig, \y_\bsig^\delta)$, if we show that the RHS of the  $\y_\bsig^\delta$-ODE in \eqref{eq:ydelta} also satisfies the Lipschitz continuity and uniform boundedness conditions of \ref{assum_a1}. These conditions  are  proved in Appendix \ref{app_lemma_exitence}. We also have  the proof of   part (ii), which is established   using the existence (among pure controls) results of \cite{roxin1962existence},  denseness of Lemma~\ref{lemma_space_compact} and the  continuity of the mapping  $\bsig \mapsto W(\x_\bsig, \y_\bsig^\delta; \bsig)$,  provided by  Theorem \ref{Thm_existence_relx_cont} and part~(i).  \eop

Thus, for smooth problems \eqref{Eqn_final_problem_compact_relaxed}, relaxed and pure controls yield the same value.
  Consequently, one may restrict the attention to   pure controls among  $\U$ and 
 the  standard numerical techniques can   compute $\u^*_\delta \in \U$, for any $\delta > 0.$

\subsection{Convergence}
We now move towards one of the main results of this paper, where we establish that the solution $\u^*_\delta$ of the smooth problem \eqref{Eqn_final_problem_compact_relaxed},  will  be $\epsilon$-optimal for the original problem \eqref{Eqn_combined_problem}-\eqref{Eqn_x_pure_traj}---and then any numerical solution of the smooth control problem  will be an  approximate solution for \eqref{Eqn_combined_problem}-\eqref{Eqn_x_pure_traj}, where the required level of accuracy can be derived by choosing appropriate $\delta$. 

The proof relies on the parametric continuity of the value function of appropriate sequence of  `control problems'   using the Maximum Theorem (see \cite[Chapter VI]{berge1877topological}). 
This theorem holds  under sufficiently general assumptions---it requires that the objective function is jointly continuous in the variable and the parameter,  and  that the domains,   one for each parameter, are sufficiently `well behaved'. 
To apply the theorem and establish the desired convergence, we first identify the objective function, parameter, and domains connecting the relaxed problems in \eqref{Eqn_combined_problem_relaxed} and \eqref{Eqn_final_problem_compact_relaxed}. We then relate their counterparts under pure controls $\U$, ultimately obtaining an approximate solution to \eqref{Eqn_combined_problem}-\eqref{Eqn_x_pure_traj}. 

We begin with the description of a  parametrized objective function that incorporates  the above   mentioned connection---consider the objective  function $\Gamma : \Sscr \times [0, \infty) \to \mathbb{R}$,    constructed using function $W$ of \eqref{eqn_obj_fun_with_y} and the parameter $\delta \ge 0$ as below: 
\begin{eqnarray}
&&\hspace{-6mm}\Gamma^* (\delta) = \sup_{\bsig \in \Sscr}  \Gamma  ( \bsig, \delta), \mbox{ where, }  \label{Eqn_Gamma_star}\\
    &&\hspace{-6mm}\Gamma  ( \bsig, \delta) :=   W(\x_\bsig, \y_\bsig^\delta; \bsig) = \int_{0}^{t_1} L_r(t, x_\bsig(t), \sigma (t))dt  + \Psi(x_\bsig(t_1)) - y_\bsig^\delta (t_1), \mbox{ with,  } \label{Eqn_Gamma}\\
    &&\hspace{-4mm}\x_\bsig  \mbox{  is the solution of the ODE \eqref{Eqn_relaxed_dynamics_refined},  and  } \nonumber\\
    &&\hspace{-4mm}\y^\delta_\bsig  = 
    \left \{ \begin{array}{llll}
    \mbox{  the solution of the  ODE \eqref{eq:ydelta} }      & \mbox{ when } \delta > 0,   \nonumber\\
  \mbox{ is the max-trajectory \eqref{Eqn_y_defn} }   & \mbox{ when } \delta = 0.    & 
    \end{array}
    \right . \nonumber
\end{eqnarray}
In the above,   any relaxed control $\bsig \in \Sscr$ forms a decision variable and the parameter $\delta $ links the required optimization problems,   because of the following reasons:
\begin{itemize}
    \item    the optimization problem  $ \sup_{\bsig\in \Sscr}\Gamma  ( \bsig, \delta) $, for any $\delta > 0$, exactly equals the smooth-problem  \eqref{Eqn_final_problem_compact_relaxed}, in other words,   $\vr^\delta =  \Gamma^*  (   \delta) $;  
    \item 
    for $\delta=0$, $\sup_{\bsig\in\Sscr}\Gamma(\bsig,\delta)$ is the original relaxed problem in \eqref{Eqn_combined_problem_relaxed}, so $\Gamma^*(0)$ equals its value.
\end{itemize}
Thus, the idea behind establishing the desired convergence (and then $\epsilon$-optimality) is to show that the  value function $\Gamma^*(\delta)$ is continuous wrt $\delta$. 
More precisely, we only require the continuity at $\delta = 0$, i.e., 
  the convergence of the relaxed values, 
\(
\Gamma^*(\delta_n) \to \Gamma^*(0), 
\)
for  any given sequence $\delta_n \stackrel{n \to \infty}{ \longrightarrow} 0$; let $\Theta := \{\delta_n\}_{n\ge 1}$.
To this end, 
we  apply  the  Maximum Theorem to  the parametrized function $\Gamma$ defined in \eqref{Eqn_Gamma}, after  restricting its joint domain to  $\Sscr \times \Theta$, by   proving  the following major steps:

\begin{enumerate}[label=(\textbf{\Roman*}), ref=(\textbf{\Roman*})]
\setcounter{enumi}{0}
\item  let $\Sscr_\delta$ represent the domain of optimization, when the parameter is $\delta$, then one needs to prove that $\delta \mapsto \Sscr_\delta$ is compact and continuous correspondence.  \label{step_I}
\item  one needs to prove that $\Gamma$ is jointly continuous at  $(\bsig, \delta) \in \Sscr\times \Theta$ in some appropriate topology. 
\label{step_II}
\end{enumerate} 

First, observe that the domain of optimization is the same  for any  $\delta$, i.e.,   
    $\Sscr_\delta = \Sscr$
    and thus the Step  \ref{step_I} is immediate by   
 Lemma \ref{lemma_space_compact}, after  empowering $\Sscr$ with r-weak  topology.  
 
 Towards proving the Step  \ref{step_II},   it suffices to  establish sequential continuity, as $\Sscr$ is metrizable by Lemma \ref{lemma_space_compact}.
By Theorems \ref{thm:relaxed_controls_refined}-\ref{Thm_existence_relx_cont}, we have already proved that 
the  trajectory $\x_\bsig$, the  max trajectory $\y_\bsig = \y^{0}_\bsig$ and the running cost $L_r$ are sequentially continuous wrt the relaxed control $\bsig$. Further  $\x_\bsig$ and $L_r$  do not depend upon $\delta$.  
\newcommand{\II}{{\mathbb I}}
We next \textit{establish the joint continuity of the max-trajectory $\y_\bsig^\delta$ wrt   $(\bsig, \delta) \in \Sscr \times \Theta$}:
\begin{thm}[{\bf $(\bsig,\delta)$-continuity}] 
\label{Thm_ydel_convg} \textit{Assume \ref{assum_a0}-\ref{assum_a3}. 
 Then  $\{\y^0_\bsig: \bsig \in \Sscr\}$, the  max-trajectories of \eqref{Eqn_y_defn},  and $\{(\x_\bsig, \y^\delta_\bsig):  \bsig\in \Sscr, \delta > 0\}$,  the solutions   of  \eqref{Eqn_relaxed_dynamics_refined}-\eqref{eq:ydelta}   satisfy
the following continuity results   in uniform topology. \\
(i)  Fix $\delta> 0$.  
Then 
$
\| (\x_{\bsig_n}, \y_{\bsig_n}^\delta) - (\x_{\bsig}, \y_{\bsig}^\delta) \|_\infty \to 0,$ for any sequence,  $\bsig_n \rwc \bsig$.\\
(ii) Fix $\bsig \in \Sscr$ and  $\delta > 0$. Then 
$\|\y^{\delta}_\bsig -  \y^0_\bsig \|_\infty \le \delta$. \\
  (iii) Further,  
$
\| (\x_{\bsig_n}, \y_{\bsig_n}^{\delta_n}) - (\x_{\bsig}, \y_{\bsig}^0) \|_\infty \to 0,$ for any    $\bsig_n \rwc \bsig$ and $\delta_n \to 0$.}
\end{thm}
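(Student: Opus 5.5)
Part (i) needs no new argument: it is exactly Lemma~\ref{Lem_exist_optimal_control}(i). That lemma was proved by checking that the right-hand side of the $\y^\delta$-ODE \eqref{eq:ydelta} satisfies the Lipschitz and linear-growth conditions of \ref{assum_a1}, and then invoking Theorem~\ref{thm:relaxed_controls_refined}(b) for the joint state $(\x,\y^\delta)$. So I would simply cite it.

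The heart of the proof is part (ii). I will show the sandwich $y^0_\bsig(s)\le y^\delta_\bsig(s)\le y^0_\bsig(s)+\delta$ for all $s\in T$.

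\emph{Setup.} Fix $\bsig$ and write $\ell(s):=L_{mx}(s,x_\bsig(s))$. By \ref{assum_a3}, $L_{mx}$ is $C^1$, and $\x_\bsig$ is absolutely continuous. Since $\Delta_{mx}$ is affine in $f$, we get $\dot\ell(s)=\Delta_{mx}(s,x_\bsig(s),\sigma(s))$ for a.e.\ $s$, where the averaged $\Delta_{mx}$ is defined through \eqref{Eqn_relaxed_dynamics_refined}. Also $\ell(0)=y^0_\bsig(0)=y^\delta_\bsig(0)$; this uses the paper's initialisation convention, read as $y(0)=L_{mx}(0,x_0)$. Finally, $\y^\delta_\bsig$ is nondecreasing because $\dot y^\delta\ge 0$.

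\emph{Lower bound.} Set $d:=\ell-y^\delta_\bsig$. Suppose $d(s')>0$ for some $s'$, and let $s_0<s'$ be the last zero of $d$ before $s'$. On $(s_0,s')$ we have $d>0$, hence $\psi_\delta(d)=1$. Therefore $\dot y^\delta=(\dot\ell)^+\ge\dot\ell$ a.e., so $d$ is nonincreasing on this interval. This contradicts $d(s_0)=0<d(s')$. Hence $\ell\le y^\delta_\bsig$ everywhere. Monotonicity of $\y^\delta_\bsig$ then gives $y^0_\bsig(s)=\sup_{r\le s}\ell(r)\le y^\delta_\bsig(s)$.

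\emph{Upper bound.} Set $e:=y^\delta_\bsig-y^0_\bsig-\delta$, so $e(0)=-\delta<0$. Suppose $e>0$ somewhere, and take a maximal interval $(s_0,s')$ on which $e>0$, with $e(s_0)=0$. On this interval $y^\delta-\ell\ge y^\delta-y^0>\delta$. Then $\psi_\delta(\ell-y^\delta)=0$, so $y^\delta$ is constant there. Meanwhile $y^0$ is nondecreasing, so $e$ is nonincreasing on the interval, which is a contradiction. Combining the two bounds gives $\|\y^\delta_\bsig-\y^0_\bsig\|_\infty\le\delta$.

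\emph{Main obstacle.} The main technical care in (ii) is making these first-crossing arguments rigorous with only a.e.\ derivatives. I would handle this by writing each difference as an integral of its a.e.\ derivative (absolute continuity) on the relevant interval, rather than arguing pointwise.

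Part (iii) then follows from the triangle inequality. The $\x$-component converges by Theorem~\ref{thm:relaxed_controls_refined}(b). For the $\y$-component,
$$\|\y^{\delta_n}_{\bsig_n}-\y^0_\bsig\|_\infty\le\|\y^{\delta_n}_{\bsig_n}-\y^0_{\bsig_n}\|_\infty+\|\y^0_{\bsig_n}-\y^0_\bsig\|_\infty\le\delta_n+\|\y^0_{\bsig_n}-\y^0_\bsig\|_\infty .$$
The first term tends to zero by part (ii), whose bound is uniform in $\bsig$. The second tends to zero by Theorem~\ref{Thm_existence_relx_cont}(a)(ii).
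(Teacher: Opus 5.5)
Your proof is correct. Parts (i) and (iii) coincide with the paper's arguments, but for part (ii) you take a genuinely different route.

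The paper partitions $T$ into alternating switching intervals, on which $\dot y^\delta=0$ or $\dot y^\delta>0$. These intervals are defined as infima of sets involving the pointwise sign of $\dot L_{mx}$. The paper then proves $y^0\le y^\delta\le y^0+\delta$ by induction over the intervals, with sub-cases according to whether $y^0$ moves or $y^\delta$ meets $L_{mx}$, plus a mean-value argument on the flat pieces.

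You replace all of this with two first-crossing arguments on the continuous gaps $d=\ell-y^\delta$ and $e=y^\delta-y^0-\delta$. You use only two facts: $\psi_\delta=1$ on $\{d>0\}$, so $\dot y^\delta\ge\dot\ell$ a.e.\ there; and $\psi_\delta=0$ on $\{d<-\delta\}$, so $\dot y^\delta=0$ there.

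This makes the argument more rigorous and more robust in three ways.
\begin{itemize}
\item You need no switching times built from a derivative that exists only a.e.
\item There is no induction over intervals that might accumulate before $t_1$; an induction on $i\in\mathbb{N}$ would not reach past such an accumulation point.
\item You never use the paper's inequality $\dot y^\delta\le\dot L_{mx}$. That inequality can fail for genuinely relaxed controls if the positive part of $\Delta_{mx}$ is taken before averaging over $\sigma(s)$. This is what one gets by relaxing the joint field $(f,F)$ in the proof of Lemma~\ref{Lem_exist_optimal_control}(i).
\end{itemize}

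Your argument also gives the stronger intermediate fact $L_{mx}(t,x_\sigma(t))\le y^\delta_\sigma(t)$ for all $t$. It works for any initial value $y^\delta(0)\in[L_{mx}(0,x_0),\,L_{mx}(0,x_0)+\delta]$. You are right that the stated initialisation $y^\delta(0)=x_0$ must be read as $y^\delta(0)=L_{mx}(0,x_0)$; the paper's own proof uses this reading as well.

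The paper's decomposition gives a more explicit picture of when $y^\delta$ is flat and when it tracks $L_{mx}$, but that structure is not needed for the estimate.
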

\textbf{Proof:} \textbf{Part (i)} is directly given by part (i) of Lemma \ref{Lem_exist_optimal_control}. \\
\textbf{Part (ii):}
We first define the following brief notations and some definitions:
 $$L_{mx}(t) = L_{mx}(t, x(t)), \  \  \bdot{L}_{mx} (t) = \Delta_{mx} (t, x(t), u(t) ), \forall  t. 
 $$  
Define $\Dw_0  := 0,$ and then the  following terms  for each  $i \ge 1$, recursively using \eqref{eq:ydelta},  
\begin{eqnarray}
    \Up_i &:=& \inf\{ t > \Dw_{i-1}: L_{mx}(t)> y^\delta(t) - \delta, \ \bdot{L}_{mx}(t) >0 \} = \inf\{ t > \Dw_{i-1}:  \bdot{y}^\delta(t)  >0 \}, \nonumber\\
    \Dw_i &:=& \inf\{ t > \Up_i: L_{mx}(t) \le  y^\delta(t) - \delta \mbox{ or }  \bdot{L}_{mx}(t) \le  0 \}= \inf\{ t > \Up_i:  \bdot{y}^\delta(t)  =0 \} \label{Eqn_definition_ui_di}
\end{eqnarray}
(we suppress $\bsig$ while retaining $\delta$ in notations for simplicity and clarity). 
Continue the above definitions till one of them touches $t_1$ and  
these imply the following:
 \begin{eqnarray*}
     &&\bdot{y}^\delta(t)  =0  \mbox{ for any } t \in \cup_{i\ge 1} [ \Dw_{i-1}, \Up_i], \mbox{ while,  } \bdot{y}^\delta(t)  > 0,\mbox{ for any } t \in \cup_{i\ge 1} [  \Up_i, \Dw_{i}]. 
 \end{eqnarray*}
Towards proving part (ii), it   suffices to prove the following,
\begin{eqnarray}
&&y^0(t) \le     y^\delta (t) \le y^0 (t) + \delta,  \mbox{ for all } t \in T. 
\label{Eqn_Sw}
\end{eqnarray}
As the functions involved in \eqref{Eqn_definition_ui_di} are absolutely continuous,  one can have at most countably many  $\{\Dw_i, \Up_i\}_i$ (see \cite{royden2010real}).
Thus, the result is derived using
 mathematical induction on $i$: with  $\II_\ell := (\Up_\ell, \Dw_\ell) \cup [\Dw_\ell, \Up_{\ell+1}]$ for each $\ell$,  we prove \eqref{Eqn_Sw}     for all $t \in   \II_i$,  assuming the  same for  all previous intervals, i.e., for all $t \in \cup_{\ell < i} \II_\ell$. Detailed proof is in  Appendix \ref{app_thm_y_ctny}.

\textbf{Part (iii):} For the given pair of sequences, one can split using   triangle inequality  as below and then using the upper bound of part (ii), which is uniform across $\bsig \in \Sscr$:
\begin{eqnarray*}
\|(\x_{\bsig_n}, \y^{\delta_n}_{\bsig_n}) - (\x_{\bsig}, \y^0_{\bsig})\|_\infty &\le&  \|(\x_{\bsig_n},\y^{\delta_n}_{\bsig_n})- (\x_{\bsig_n},\y^{0}_{\bsig_n}
    )\|_\infty 
    + \|(\x_{\bsig_n},\y^{0}_{\bsig_n}) - (\x_{\bsig}, \y^0_{\bsig})\|_\infty \\
   &\le& \delta_n  + \|(\x_{\bsig_n}, \y^{0}_{\bsig_n}) - (\x_{\bsig}, \y^0_{\bsig})\|_\infty. 
 \end{eqnarray*}
Next, the above converges to $0$ as $n \to \infty$ using Theorem \ref{thm:relaxed_controls_refined}.(b) and Theorem \ref{Thm_existence_relx_cont}.(a).(ii). \eop

Thus, we have established the  continuity arguments of various components that build  the  objective function $\Gamma$. 
We finally establish the major result of this paper:
\begin{thm} [{\bf Approximation and  convergence}] 
\label{Thm_epsilon_optimality}\textit{Assume \ref{assum_a0}-\ref{assum_a5} and  
consider    $\delta_n \to 0.$   
Then with $\u_n^*$ representing the solution of Lemma \ref{Lem_exist_optimal_control},  for each  $n$, we have:
$$
\lim_{n \to \infty}
\left |  \Gamma (\u^*_n, 0)  - \Gamma^* (0) \right | = 0, \mbox{ where  }
\Gamma (\u^*_n, 0) = J(x_0; \u_n^*), 
$$%
is the objective function of  \eqref{Eqn_combined_problem}    and $\Gamma^* (0)$ is the corresponding value. }
\end{thm}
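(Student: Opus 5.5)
The argument has two parts. First we show $\Gamma^*(\delta_n)\to\Gamma^*(0)$ via the Maximum theorem. Then we convert this value convergence into near-optimality of $\u^*_n$ for the $\delta=0$ problem. The convergence itself is cleanest through the uniform bound of Theorem \ref{Thm_ydel_convg}.(ii). Since $\x_\bsig$ and the running and terminal terms do not depend on $\delta$,
\[
|\Gamma(\bsig,\delta)-\Gamma(\bsig,0)| = |y^{\delta}_\bsig(t_1)-y^0_\bsig(t_1)| \le \|\y^\delta_\bsig-\y^0_\bsig\|_\infty \le \delta
\quad\mbox{for every } \bsig\in\Sscr .
\]
Taking suprema gives $|\Gamma^*(\delta_n)-\Gamma^*(0)|\le\delta_n\to 0$. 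This is exactly the continuity at $\delta=0$ that the Maximum theorem would give from Steps \ref{step_I}--\ref{step_II}. For the Maximum-theorem version, Step \ref{step_I} is immediate because the domain is constant and compact by Lemma \ref{lemma_space_compact}. Step \ref{step_II}, joint continuity of $\Gamma$ on $\Sscr\times(\Theta\cup\{0\})$, follows from Theorem \ref{Thm_existence_relx_cont}.(a).(i) for the integral term, Theorem \ref{thm:relaxed_controls_refined}.(b) with continuity of $\Psi$ for the terminal term, and Theorem \ref{Thm_ydel_convg}.(iii) for the $y$-term.

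Second, we use the optimality of $\u^*_n$. By Lemma \ref{Lem_exist_optimal_control}.(ii), $\u^*_n$ is optimal for the relaxed smooth problem, so $\Gamma(\u^*_n,\delta_n)=\Gamma^*(\delta_n)$. Applying the uniform bound once more, now at $\bsig=\u^*_n$, we get
\[
|\Gamma(\u^*_n,0)-\Gamma^*(0)| \le |\Gamma(\u^*_n,0)-\Gamma(\u^*_n,\delta_n)| + |\Gamma^*(\delta_n)-\Gamma^*(0)| \le 2\delta_n \to 0.
\]
The identification $\Gamma(\u^*_n,0)=J(x_0;\u^*_n)$ follows from the definition: for a pure control the relaxed ODE reduces to \eqref{Eqn_x_pure_traj}, and $y^0_{\u}(t_1)=\sup_{s\in T}L_{mx}(s,x(s))$. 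Also note $\Gamma^*(0)=\vr(x_0)$, which equals $\vp(x_0)$ by density (Corollary \ref{Corollary}). So $\u^*_n$ is $2\delta_n$-optimal for the original problem.

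The main obstacle is the uniformity in $\bsig$ of the approximation $\y^{\delta}_\bsig\approx\y^0_\bsig$. A proof based only on the Maximum theorem gives value convergence. It does not by itself control $\Gamma(\u^*_n,0)$, because the maximizers $\u^*_n$ move with $n$. One would then need a subsequence argument: extract $\u^*_{n_k}\rwc\bsig^*$ by compactness, use joint continuity to show $\bsig^*$ is optimal at $\delta=0$, and then use joint continuity again at $(\u^*_{n_k},0)\to(\bsig^*,0)$. A sub-subsequence argument then yields the full limit.

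The $\delta$-uniform estimate of Theorem \ref{Thm_ydel_convg}.(ii) avoids all this and gives the explicit rate $2\delta_n$. Since that estimate is already available, I would present the direct bound and mention the compactness argument as an alternative.
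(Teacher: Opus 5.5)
Your proof is correct, and its second half is exactly the paper's: the triangle inequality at $\u^*_n$, using $\Gamma(\u^*_n,\delta_n)=\Gamma^*(\delta_n)$ from Lemma~\ref{Lem_exist_optimal_control} and the bound of Theorem~\ref{Thm_ydel_convg}.(ii).

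\textbf{Where you differ} is in how you obtain $\Gamma^*(\delta_n)\to\Gamma^*(0)$.
\begin{itemize}
\item The paper proves sequential joint continuity of $\Gamma$ on $\Sscr\times\Theta$. It uses Theorem~\ref{Thm_existence_relx_cont}.(a).(i), Theorem~\ref{Thm_ydel_convg}.(iii) and continuity of $\Psi$, and then invokes Berge's Maximum theorem with the constant compact correspondence $\delta\mapsto\Sscr$.
\item You apply the $\bsig$-uniform bound $|\Gamma(\bsig,\delta)-\Gamma(\bsig,0)|\le\delta$ to every control and take suprema. This needs no continuity or compactness for this step, and it gives an explicit rate.
\end{itemize}
Your route also shows that the Maximum-theorem step is logically redundant in the paper's proof. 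The paper's final step already relies on Theorem~\ref{Thm_ydel_convg}.(ii), and that bound alone yields the value convergence.

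\textbf{What the paper's route buys} is a template that needs only joint continuity of $\Gamma$, which could survive a non-uniform smoothing. Even then, passing from value convergence to near-optimality of $\u^*_n$ would require the compactness/subsequence argument you sketch. The paper, like you, avoids this by using the uniform bound.

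\textbf{A small sharpening of your rate.} The proof of Theorem~\ref{Thm_ydel_convg}.(ii) actually gives the one-sided estimate \eqref{Eqn_Sw}, $0\le y^{\delta}_\bsig-y^{0}_\bsig\le\delta$. Hence $\Gamma(\bsig,\delta)\le\Gamma(\bsig,0)$ for all $\bsig$, and
$J(x_0;\u^*_n)=\Gamma(\u^*_n,0)\ge\Gamma(\u^*_n,\delta_n)=\Gamma^*(\delta_n)\ge\Gamma^*(0)-\delta_n$.
So $\u^*_n$ is in fact $\delta_n$-optimal, not merely $2\delta_n$-optimal.
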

\textbf{Proof:} 
Consider  any arbitrary sequence $\{\bsig_n\}_{n \geq 1}  \subset \Sscr$ such that $\bsig_n \rwc \bsig \in \Sscr$.  
Then,
\begin{eqnarray*}
 \Gamma(\bsig_n, \delta_n) &=&   \int_{0}^{t_1} L_r(t, x_{\bsig_n} (t), \sigma_n (t))dt  - y_{\bsig_n}^{\delta_n} (t_1) + \Psi(x_{\bsig_n}(t_1)) \\
  &\longrightarrow&   \int_{0}^{t_1} L_r(t, x_\bsig (t), \sigma (t))dt  - y_{\bsig}^{0} (t_1) + \Psi(x_{\bsig}(t_1))=  \Gamma(\bsig,  0),
\end{eqnarray*}
establishing  sequential continuity of $\Gamma$ on $\Sscr\times \Theta$, with $\Theta = \{\delta_n\}$, 
because of the following:\\
$\bullet$ integral terms converge by Theorem \ref{Thm_existence_relx_cont}.(a).(i), observe initial condition $x_0$ is fixed;\\
$\bullet$ second and third   terms converge 
     by Theorem \ref{Thm_ydel_convg}.(iii) and continuity of $\Psi$ (by \ref{assum_a2}).

Thus,  the Maximum Theorem (see \cite[Chapter VI]{berge1877topological}) is applicable  (see also Step \ref{step_I}) and hence we have:  $\Gamma^*(\delta_n) \to \Gamma^*(0).$
Also, by Lemma \ref{Lem_exist_optimal_control},   $\Gamma^*(\delta_n) = \Gamma( \u_n^*, \delta_n)$, for any $n$.    Thus,  
from \eqref{Eqn_Gamma} and by uniform (across   $\bsig$ or $\u$) upper bound of  Theorem \ref{Thm_ydel_convg}.(ii), we have, 
 \begin{eqnarray*}
\left |  \Gamma (\u_n^*,  0)  - \Gamma^* (0) \right | &\le&   \left |  \Gamma (\u_n^*,  0) - \Gamma(\u_n^*, \delta_n)  \right | + \left |  \Gamma (\u_n^*,  \delta_n) - \Gamma^*(0)  \right | \\
&=& \ |y^{\delta_n}_{\u_n^*} (t_1) - y^{0}_{\u_n^*} (t_1) |  + \left |  \Gamma^* (  \delta_n) - \Gamma^*(0)  \right | \le  \delta_n  + \left |  \Gamma^* (  \delta_n) - \Gamma^*(0)  \right | \to  0.  
 \end{eqnarray*}
    \eop

\subsection{Numerical solutions}
\label{sec_numerical_results}
  Thus, by Theorem \ref{Thm_epsilon_optimality}, the solution $\u^*_\delta$ of \eqref{Eqn_final_problem_compact_relaxed}, whose existence in $\U$ is guaranteed by Lemma \ref{Lem_exist_optimal_control}, is $\epsilon$-optimal for sufficiently small $\delta$. Fix such a $\delta>0$, after choosing an appropriate $\phi$ in \eqref{eq:ydelta}, and additionally assume that:
\begin{enumerate} [label=\textbf{A.\arabic*}, ref=\textbf{A.\arabic*}]
\setcounter{enumi}{5}
    \item \textit{There exists $K_\Psi>0$ such that  \begin{eqnarray*}
    |\Psi(x) - \Psi(x')|&<& K_\Psi| x- x'|, \mbox{ for all } x, x' \in \mathbb{R}^n, \mbox{ and }\\
      |\Psi(x)| &\leq&  K_\Psi (1 + |x|), \mbox{ for all } x \in \mathbb{R}^n. 
    \end{eqnarray*}}
    \label{assum_a6}
\end{enumerate}

It is well known (see \cite[Lemma 2.2]{zhou1993verification}) that the value function
$(t,x_0,y_0)\mapsto v(t,x_0,y_0)
$
of the smooth problem \eqref{Eqn_final_problem_compact_relaxed} is the unique viscosity solution of the HJB equation
\begin{eqnarray}
\label{Eqn_hjb_general}
&&
\frac{\partial v}{\partial t}
+\sup_{u\in U}
H\left(t,x,y,u,\frac{\partial v}{\partial x},\frac{\partial v}{\partial y}\right)=0,
\quad \mbox{ satisfying, }
v(t_1,x,y)=-y+\Psi(x),
\end{eqnarray}
where
\(
H(t,x,y,u,p,q)
:=pf(t,x,u)
+q\bigl(\Delta_{mx}(t,x,u)\bigr)^+
\psi_\delta\bigl(L_{mx}(t,x)-y\bigr)
+L_r(t,x,u), 
\)
  with 
\begin{eqnarray*}
    \Delta_{mx}(t,x,u)
&=&\frac{\partial L_{mx}(t,x)}{\partial t}
+\frac{\partial L_{mx}(t,x)}{\partial x}f(t,x,u), \mbox{ and }\\
\psi_\delta(d)
&=&\phi\left(1+\frac{d}{\delta}\right)\mathds{1}_{\{d\in[-\delta,0]\}}
+\mathds{1}_{\{d>0\}}.
\end{eqnarray*}
Recall that we are specifically interested in the value $v(0,x_0,x_0)$. Moreover, if $f$, $L_r$, and $\Psi$ are continuously differentiable, the corresponding verification result holds (see \cite[Theorem 3.2]{zhou1993verification}). Thus, solving the HJB equation \eqref{Eqn_hjb_general} yields a solution of \eqref{Eqn_final_problem_compact_relaxed} in $\U$, which is $\epsilon$-optimal for \eqref{Eqn_combined_problem}-\eqref{Eqn_x_pure_traj} for sufficiently small $\delta$.

One may also employ the \textit{Pontryagin maximum principle} (see, e.g., \cite{fleming2006controlled}) to compute the above \textit{candidate}. To this end, we solve the adjoint (co-state) equations
\[
\frac{d\lambda_x}{dt}=-\frac{\partial H}{\partial x},
\qquad
\frac{d\lambda_y}{dt}=-\frac{\partial H}{\partial y},
\]
with terminal conditions
\(
\lambda_x(t_1)=-\frac{\partial\Psi(x(t_1))}{\partial x},\
\lambda_y(t_1)=-1.
\)
The optimal control is then obtained by maximizing the Hamiltonian pointwise:
\[
u_\delta^*(t)\in\arg\max_{u\in U}
H\bigl(t,x(t),y^\delta(t),u,\lambda_x(t),\lambda_y(t)\bigr) \mbox{ for each } t \in T,\]
using the computed state and co-state trajectories $\x$, $\y^\delta$, ${\bm \lambda}_x$, and ${\bm \lambda}_y$.

We now apply the above smooth approximation framework to study the queuing control problem introduced in Subsection \ref{subsec_queue_example}.

\section{Queuing control: minimizing peak occupancy}
\label{sec_queu_control} 
We study the problem \eqref{Eqn_queue_dynamics}--\eqref{Eqn_Queue_problem_with_peak}, where the service rate is of the form
\(\mu(s, x(s); u(s))=(\alpha(s)+x(s))u(s)
\) $\forall s$, for a control function~$\u$. Thus, the service rate at each time (say at time $s$) is proportional to both the arrival rate ${\alpha(s)}$ and the current occupancy $x(s)$.
We consider the following assumptions: 
\begin{enumerate}[label=\textbf{C.\arabic*}, ref=\textbf{C.\arabic*}]
\item \textit{The   function ${\bm \alpha}$ is  Lipschitz continuous  and is  strictly positive  on interval $T$}. \label{assum_c1}
\item \textit{The control space  $U = [0, \bar u] $ for some $\bar u < \infty$}. \label{assum_c2}
\item \textit{We consider linear and quadratic  cost functions\footnote{One can   consider  other  cost functions  that satisfy \ref{assum_a5}.}}:  $
g(x) = x,     \mbox{ and }  h(\mu)  =   (\mu- \mu_{id})^2.  
$ \label{assum_c3}
\end{enumerate}
In \ref{assum_c3}, 
  $\mu_{id} >0$ represents the \textit{ideal operating service rate} of the system  at which the server incurs minimum cost;  basically the system is maximum efficient to serve at rate~$\mu_{id}$.
The problem \eqref{Eqn_queue_dynamics}--\eqref{Eqn_Queue_problem_with_peak} is again of the form \eqref{Eqn_combined_problem}--\eqref{Eqn_x_pure_traj} and can therefore be analyzed using the solution techniques of Subsection \ref{sec_numerical_results}. In particular, we take
 $\phi(s)= e^{1-s^{-2}}$  in  \eqref{eq:ydelta}.

\subsubsection*{Smooth approximation}
The smooth problem \eqref{Eqn_final_problem_compact_relaxed} for this setting is given by
\begin{align}
\label{Eqn_queue_objective_fun}
\sup_{u}\quad &
-\int_0^{t_1}\left[\rho x(t)+h\bigl(\mu(t)\bigr)\right]dt
-g_{t_1}\bigl(x(t_1),y^\delta(t_1)\bigr),
\quad
g_{t_1}(x,y^\delta):=\eta x+\beta y^\delta,
\\
&\text{subject to}\quad
\bdot{x}(t)=\alpha(t)-\mu(t, x(t); u(t) ),
\qquad
\bdot{y}^\delta(t)=\bigl(\bdot{x}(t)\bigr)^+
\psi_\delta\bigl(x(t)-y^\delta(t)\bigr),  
\label{Eqn_queuing_smooth_variant}
\end{align}
where $\mu(t, x; u) = (\alpha(t)+x)u$ and 
$
y^\delta(t)$ is the  smooth approximation of the peak congestion level up to time $t$  given by, $\sup_{s\in[0,t]}x(s)
$.

The assumptions \ref{assum_a0}–\ref{assum_a6}, required for Lemma \ref{Lem_exist_optimal_control}, Theorem \ref{Thm_epsilon_optimality}, and for ensuring that the HJB equation \eqref{Eqn_hjb_general} satisfies the conditions of the verification theorem, are automatically fulfilled under assumptions \ref{assum_c1}–\ref{assum_c3}.
 In view of this, we obtain the approximate numerical solution   for 
\eqref{Eqn_queue_dynamics}-\eqref{Eqn_Queue_problem_with_peak} by solving the following HJB PDE (see \eqref{Eqn_hjb_general}):  
\begin{eqnarray} \label{Eqn_queue_hamilton}
 &&\hspace{4mm}\frac{\partial v}{\partial t}  + \sup_{u }  H\left(t, x, y^\delta, u, \frac{\partial v}{\partial x} ,\frac{\partial v}{\partial y^\delta} \right) = 0, \mbox{ with }  v(t_1, x, y^\delta) = - \beta y^\delta - \eta x,     \\ 
&& \hspace{3mm}\mbox{ where }   H(t, x, y^\delta, u, p, q) := p\bdot{x} + q \bdot{y}^\delta - \rho x  -  h(\mu(t; u,x)). \nonumber
  \end{eqnarray}

\subsection{Numerical results}
We obtain the numerical results using the Pontryagin maximum principle. The co-state equations related to Hamiltonian $H$ in \eqref{Eqn_queue_hamilton}  are given by:
\begin{eqnarray*}
\frac{d \lambda_{x}}{dt} &=& - \frac{\partial H}{\partial x} =  \lambda_x u  -    {\lambda_y^\delta} \left ( \frac{ \partial \psi_\delta ( x- y^\delta) }{\partial x} (\alpha - \mu )^+  -  \psi_\delta ( x- y^\delta) u \mathds{1}_{\{\alpha \ge \mu \}}  \right )  +  \rho  + 2  u   (\mu - \mu_{id}) \\ 
    \frac{d \lambda_y^\delta}{d t} &=&  - \frac{\partial H}{\partial y} = -  {\lambda^\delta_{y}} \left (\frac{ \partial \psi_\delta ( x- y^\delta) }{\partial y^\delta} (\alpha - \mu)^+\right ), \mbox{ with the boundary conditions}\\
    \lambda_x(t_1) &=&  \frac{\partial g_{t_1}(x,y^\delta)}{\partial x} = -  \eta , \  \
\lambda_y^\delta(t_1)  = \frac{\partial g_{t_1}(x,y^\delta)}{\partial y^\delta} = - \beta. 
\end{eqnarray*}
One can solve the above equations along with \eqref{Eqn_queuing_smooth_variant}  to obtain state and costate trajectories, $( \x^*,  \y^{\delta*}, {\bm \lambda}^*_x, {\bm \lambda}^{\delta*}_y)$.
And then to compute the control policy, first 
observe that the Hamiltonian \eqref{Eqn_queue_hamilton} depends primarily on the sign of $(\alpha-\mu)$. Thus, for  each fixed $t$, we separately optimize over
$$
u\in\left[0,\frac{\alpha(t)}{\alpha(t)+x(t)}\right]
\quad\text{and}\quad
u\in\left[\frac{\alpha(t)}{\alpha(t)+x(t)},\bar u\right],
$$
(more precisely over non-empty sub-intervals given above), and then combine the resulting optimizers. For brevity, let 
$\lambda_x = \lambda_x^*(t)$,  $\lambda^\delta_y =\lambda_y^{\delta*}(t) $, $x=x^*(t) $,  $y^\delta=y^{\delta*}(t) $, $\alpha = \alpha(t)$.
The two sub-optimization problems, corresponding sub-optimizers $u_1^*$ and $u_2^*$ and the overall optimizer   at time $t$ are given by (using \eqref{Eqn_queue_objective_fun}-\eqref{Eqn_queue_hamilton} and \ref{assum_c3}):

\noindent $\bullet$ For the first interval, we optimize the following   over    $u \in [0,  \min\{{\bar u},\nicefrac{\alpha }{(\alpha+x)}\} ]$,
     \begin{eqnarray*}
    && O_1( x, y^\delta, u, \lambda_{x} , \lambda_{y}^\delta) := \bigg (\lambda_x + \lambda_y^\delta \psi_{\delta} (x-y)\bigg) (\alpha  - \mu ) - \rho x  -  h(\mu),   \\
 && \mbox{The first sub-optimizer,   } u_1^* := \mbox{\small$\max\left \{0, \min\left\{{\bar{u}} , \left(\frac{\alpha}{\alpha +x}\right),   \frac{2 \mu_{id} \mathds{1}_{\{u>0\}}- \lambda_x - \lambda_y^\delta   \psi_\delta ( x -y^\delta ) ) }{2  (\alpha +x)  \mathds{1}_{\{u>0\}}} \right \}\right\}$}. 
    \end{eqnarray*}
    
\noindent $\bullet$ 
The second sub-problem maximizes the following over \(u\in\left[\nicefrac{\alpha}{(\alpha+x)},\bar u\right]\), when non-empty:
\begin{eqnarray*}
    &&O_2( x, y^\delta, u, \lambda_{x} , \lambda_{y}^\delta) =  \lambda_x (\alpha- \mu)  - \rho x  -  h(\mu).  \\
     && \hspace{-15mm}\mbox{The second sub-optimizer, } u_2^*  :=  \mbox{\small$\max \left \{ \left(\frac{\alpha}{\alpha +x}\right), \  \min \left \{  \bar{u},  \frac{2  \mu_{id} \mathds{1}_{\{u>0\}}- \lambda_x  }{2 (\alpha +x)  \mathds{1}_{\{u>0\}}} \right \}\right \}$}. 
    \end{eqnarray*} 
  Thus, bringing back the dependency on $t$,  for any $t$,   
\begin{eqnarray}
\hspace{3mm}u^*(t) =
\begin{cases}
u_1^*(t), & \text{if } \frac{\alpha(t)}{(\alpha(t) + x^*(t))}> \bar u , \\
\arg\max\limits_{u \in \{u_1^*(t), u_2^*(t)\}}
H\big(t, x^*(t), y^{\delta*}(t), u, \lambda_x^*(t), \lambda_y^{\delta*}(t)\big), & \text{otherwise}.
\end{cases}
\end{eqnarray}



{\ignore{
Similarly, this framework can be applied to electricity load balancing problems. For instance, consider a power grid, where $d(t)$ represents the electricity demand at time $t$, and $g(t)$ represents the power generation rate controlled by the system operator. The imbalance between supply and demand, denoted by $x(t)$, can be modeled by the following ODE:
\begin{equation*}
    \dot{x}(t) =  g(t)- d(t).
\end{equation*}
The goal is to minimize the cost associated with the imbalance and the cost of power generation. A typical cost function might include the integral of the squared imbalance and the squared generation rate:
\begin{equation*}
    \int_0^{t_1} \left( x(t) + \gamma g(t)^2 \right) dt + x(T),
\end{equation*}
\noindent where $\gamma$ is a weighting factor. Alternatively, to account for peak load constraints, a combined cost function similar to the queuing system can be used:
\begin{equation*}
    \sup_{t \in [0, T]} x(t) + \gamma \int_0^{t_1} g(t)^2 dt + x(T).
\end{equation*}
\noindent This formulation ensures that the system not only minimizes the cumulative imbalance and generation costs but also avoids exceeding the grid's capacity limits.}}


\subsubsection*{Effect of varying $\beta$ and $\rho$} The objective function   \eqref{Eqn_Queue_problem_with_peak},   is a weighted combination of three distinct components: peak congestion  $y(t_1) = \sup_{t\in T} \{x(t)\}$, cumulative congestion $\int_0^{t_1} g(x(t))\,dt$, and server utilization $\int_0^{t_1} h(\mu(t))\,dt$. Typically,  in most of the existing queuing literature,  the trades-off between two objectives functions are studied, the cumulative congestion cost and the server utilization cost. Here, our formulation incorporates an additional objective,  the  peak term $y(t_1)$ and we investigate the impact of this new term. 
Specifically, we examine how the weights \(\beta\) and \(\rho\), associated with the \(L^\infty\)-term and congestion cost, respectively, affect the optimal control policies and corresponding state trajectories. The numerical results are obtained using the smooth approximate variants \eqref{Eqn_queue_objective_fun}--\eqref{Eqn_queue_hamilton}.

Figure~\ref{Fig_queue_sigma_effect} compares two representative cases: one with a larger $\beta$, placing greater emphasis on the peak term, and another with a smaller $\beta$. The two cases are chosen such that the server utilization cost, $\int_0^{t_1} h(\mu(t))dt$, is approximately the same at their respective optima, achieved by tuning the corresponding parameter sets.  With high $\beta$ and low $\rho$, the policy minimizes the peak, keeping $x(t)$ close to its initial value (blue curve). Here, congestion is less penalized, so the trajectory avoids large peaks without reducing cumulative integral cost. In contrast, when $\rho$ is large and $\beta$ is small, the policy minimizes cumulative congestion, resulting in a lower trajectory (red curve) for most of the horizon, but tolerates a sharp rise near the end---in other words, \textit{when one does not  account for peak levels in the control problem, significant congestion can occur towards the end of the horizon, as it contributes negligibly to the cumulative  cost}. 
This contrast illustrates the trade-off between the peak control and the cumulative congestion when the priorities differ, see the table also. 
\begin{figure*}[htbp]
\hspace{-0.3cm}
    \centering
    \begin{minipage}{5.5cm}
\includegraphics[trim = {0.5cm 12.3cm 0.4cm 0.2cm}, clip, width = 6cm, height = 5.15cm]{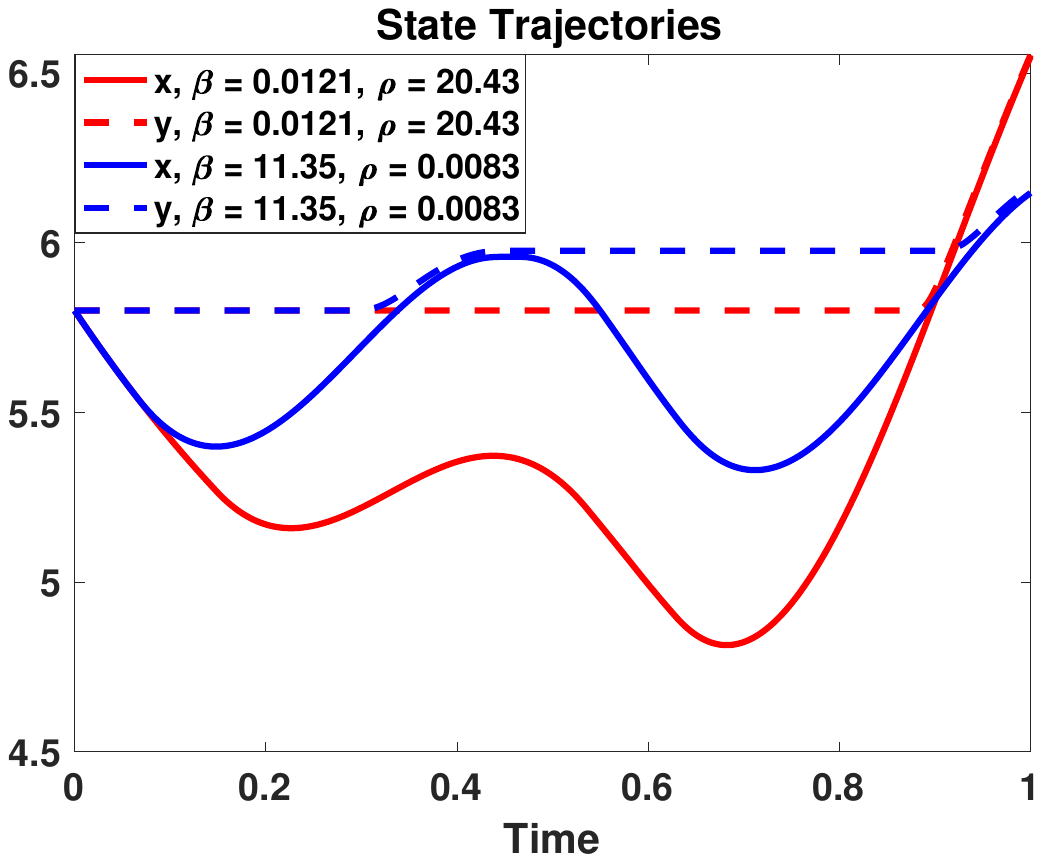}
    \end{minipage}
    \hspace{0.2cm}
        \begin{minipage}{5.5cm}
\includegraphics[trim = {0.4cm 13cm 3.6cm 0cm}, clip, width = 6cm, height = 5.20cm]{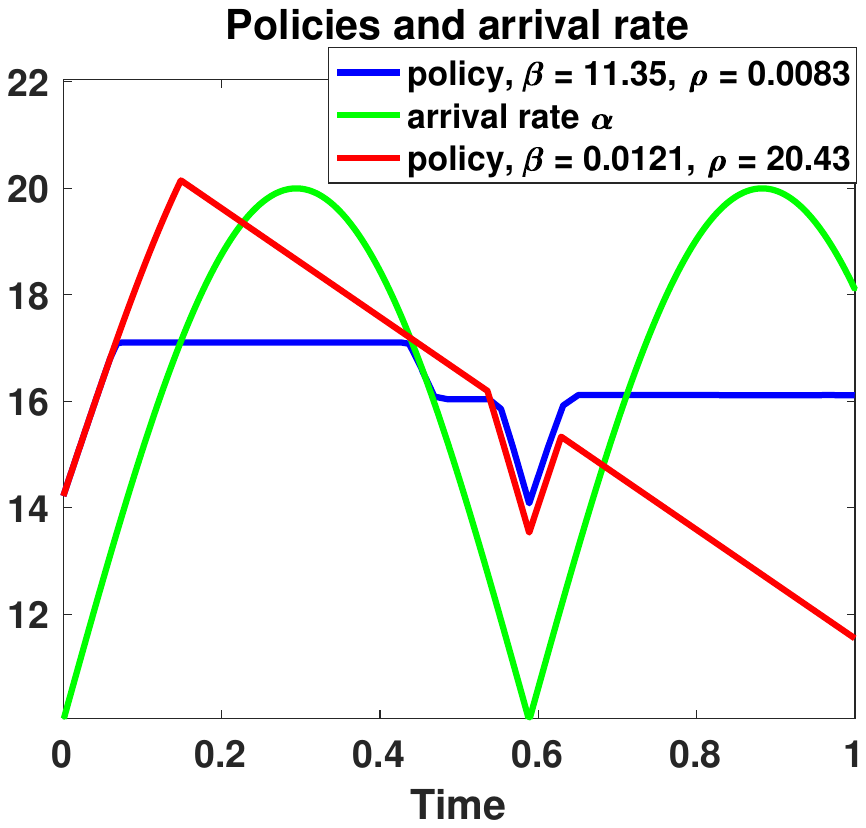}
    \end{minipage}
    \hspace{-0.1cm}
        \begin{minipage} {4.2cm}
{\small \begin{tabular}{|l|l|l|l|l|}
\hline
$\beta$ & $\rho$ &  $y^{\delta*}(t_1)$  \\ \hline
0.012       & 20.43 &  6.5536 \\ \hline
1135      & 0.0083& 6.1631  \\ \hline

\end{tabular}}
    \end{minipage}
    \caption{ State trajectories $(x^*(t), y^{\delta*}(t))$ and optimal policy $u^*(t)$, when $\delta = 0.2, t_1=1, \bar{u} = 0.9,  \eta = 1, \mu_{id} = 11.5, \int_0^{t_1} h(\mu(t))dt = 26.10, \alpha (t) = 10(1+ \cos{|\frac{3 \pi }{2} +1.7 \pi t|}). $}
    \vspace{-3mm}
    \label{Fig_queue_sigma_effect}
\end{figure*}

\subsubsection*{Pareto frontiers}  
From \eqref{Eqn_Queue_problem_with_peak} and \eqref{Eqn_queue_objective_fun}, one can observe that both peak congestion and cumulative congestion costs depend on the state trajectory. 
Nonetheless, the two costs accumulate in a very different way (over the entire time horizon),  thus one may not be able to control them simultaneously.  To study this aspect, we consider parts of two distinct  Pareto frontiers. 
We basically study  two different (and extreme) Pareto frontiers, one that trades off between the peak-congestion  and the cumulative server utilization, and the second that trades off between the cumulative congestion levels and the cumulative server utilization.  These cases highlight how different cost formulations affect the trade-offs among system objectives.

The first Pareto frontier is obtained by fixing $\rho=0$ and varying $\beta$, and is represented by the blue curve with stars in the left sub-figure of Figure~\ref{Fig_pareto_with_non_zero_mu_ideal}. It plots the cumulative server utilization cost $\int_0^{t_1}h(\mu(t)),dt$ against the peak level $y^\delta(t_1)$ at the corresponding solutions. The second frontier is obtained by fixing $\beta=0$ and varying $\rho$, and is represented by the red curve with stars in the middle sub-figure. It plots the cumulative server utilization cost $\int_0^{t_1}h(\mu(t)),dt$ against the cumulative congestion cost $\int_0^{t_1}g(x(t)),dt$.
Furthermore, at each point on the two frontiers, we tune $\beta$ and $\rho$, respectively, so that the corresponding cumulative server utilization costs are approximately equal.

We additionally plot a  curve with squares in the left sub-figure, showing the peak level $y^\delta(t_1)$ against the cumulative server utilization cost $\int_0^{t_1}h(\mu(t)),dt$, obtained by fixing $\beta=0$ and varying $\rho$, corresponding to the solutions shown in the middle sub-figure. This curve does not constitute a Pareto frontier; rather, the two curves of the sub-figure provide a comparison of the peak-level performance $y^{\delta*}(t_1)$ between solutions that explicitly prioritize peak congestion and those that do not.

Finally, in the rightmost sub-figure of Figure~\ref{Fig_pareto_with_non_zero_mu_ideal}, we plot the normalized (percentage) differences between the two blue curves and between the two red curves in the first two sub-figures. The blue curve represents the difference in peak levels attained at the respective optimizers, with and without explicit emphasis on peak congestion. Similarly, the red curve represents the difference in cumulative congestion costs between solutions that do and do not explicitly emphasize cumulative congestion.

Finally in the right most sub-figure of Figure~\ref{Fig_pareto_with_non_zero_mu_ideal}, we plot the normalized (or percentage) differences between the two  blue curves and that between two red curves  of the first two sub-figures. 
The blue curve in this sub-figure    illustrates the difference in peak levels attained at respective optimizers,  with  (blue in first sub-figure) and without (blue in second sub-figure) providing emphasis on peak-levels. Likewise the red curve in the right most sub-figure illustrates the difference in cumulative congestion cost attained with and without prominence to cumulative cost.

\begin{figure*}[htbp]
    \centering
    \begin{minipage}{5cm}
\includegraphics[trim = {3.0cm 13.cm 2.5cm 1cm}, clip, width = 5.5cm, height = 5.cm]{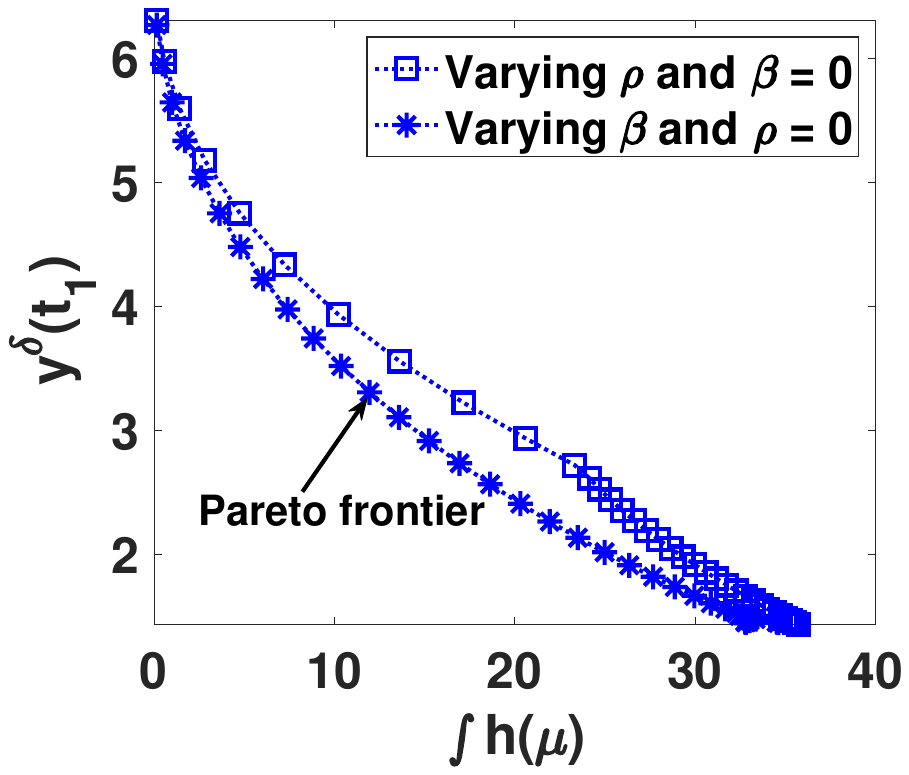}
    \end{minipage}
    \hspace{0.2cm}
        \begin{minipage}{5cm}
\includegraphics[trim = {0cm 12.7cm 4.7cm 1cm}, clip, width = 5.4cm, height = 5.04cm]{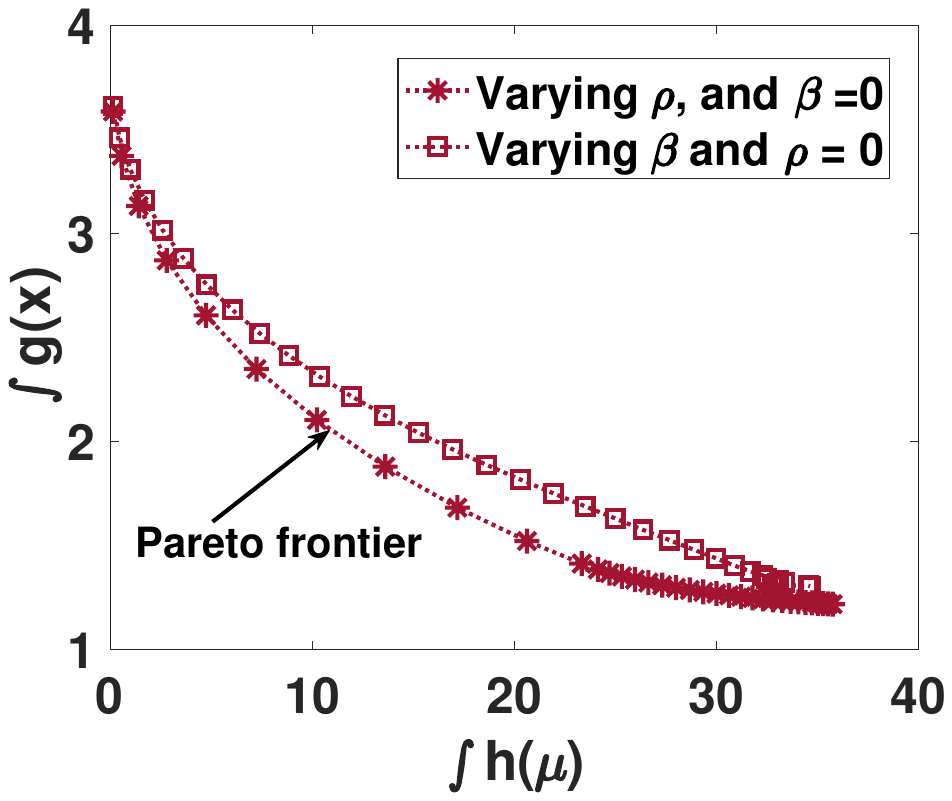}
    \end{minipage}
     \hspace{0.2cm}
        \begin{minipage}{5cm}
\includegraphics[trim = {3cm 12.5cm 3.4cm 0.4cm}, clip, width = 5.5cm, height = 5.5cm]{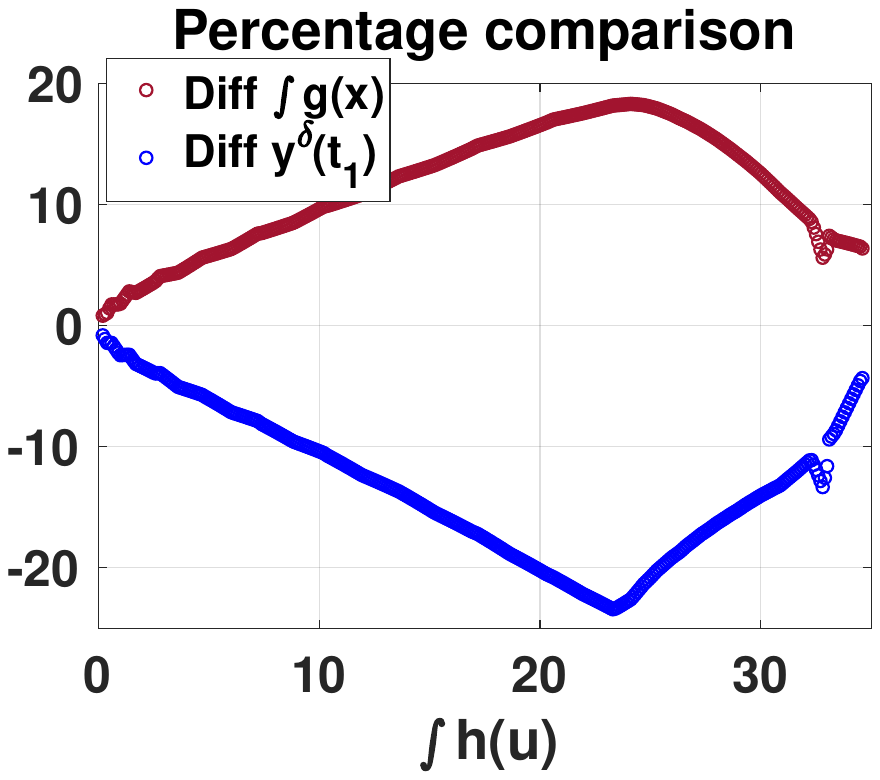}
\vspace{-10mm}
    \end{minipage}
    \caption{ Pareto frontiers:  $\delta = 0.2, \bar{u} = 0.95,   \alpha(t) = 1+|\cos\left(\frac{3 \pi}{2} + t \frac{1.7 \pi }{3000} \right)|, t_1=1, \mu_{id} = 11.5$.  
    }
\label{Fig_pareto_with_non_zero_mu_ideal}
\vspace{-3mm}
\end{figure*}

We observe that there is a significant reduction in the peak-congestion levels (even up to $27 \%$) when one includes the latter cost into the control problem; however this \textit{improvement is at the expense of the cumulative congestion cost, which degrades significantly (around $20 \%$)}, see the right sub-figure.  
The difference is minimal when the cumulative server utilization cost is either very low or very high. However, the error becomes significantly larger in the mid-range, which is typical in practical applications. 
Thus,  the design for  queuing systems must consider a three objective Pareto frontier, where 
the weight factors for  the three costs have to be chosen judiciously;  our proposed framework makes it possible to derive the corresponding $\epsilon$-optimal policy numerically.  
\subsubsection*{Convergence with $\delta$}
We next examine the convergence of the smooth approximation as the smoothing
parameter $\delta$ decreases. Figure~\ref{Fig_y_delta_converge} compares the
optimal state trajectories and corresponding policies for
$\delta=0.2,\,0.1,$ and $0.04$. 
\begin{figure*}[htbp]
\vspace{-3mm}
\hspace{-0.1cm}
    \centering
    \begin{minipage}{5.3cm}
\includegraphics[trim = {0.5cm 12.5cm 0.7cm 1.5cm}, clip, width=5.8cm, height=4.3cm]
{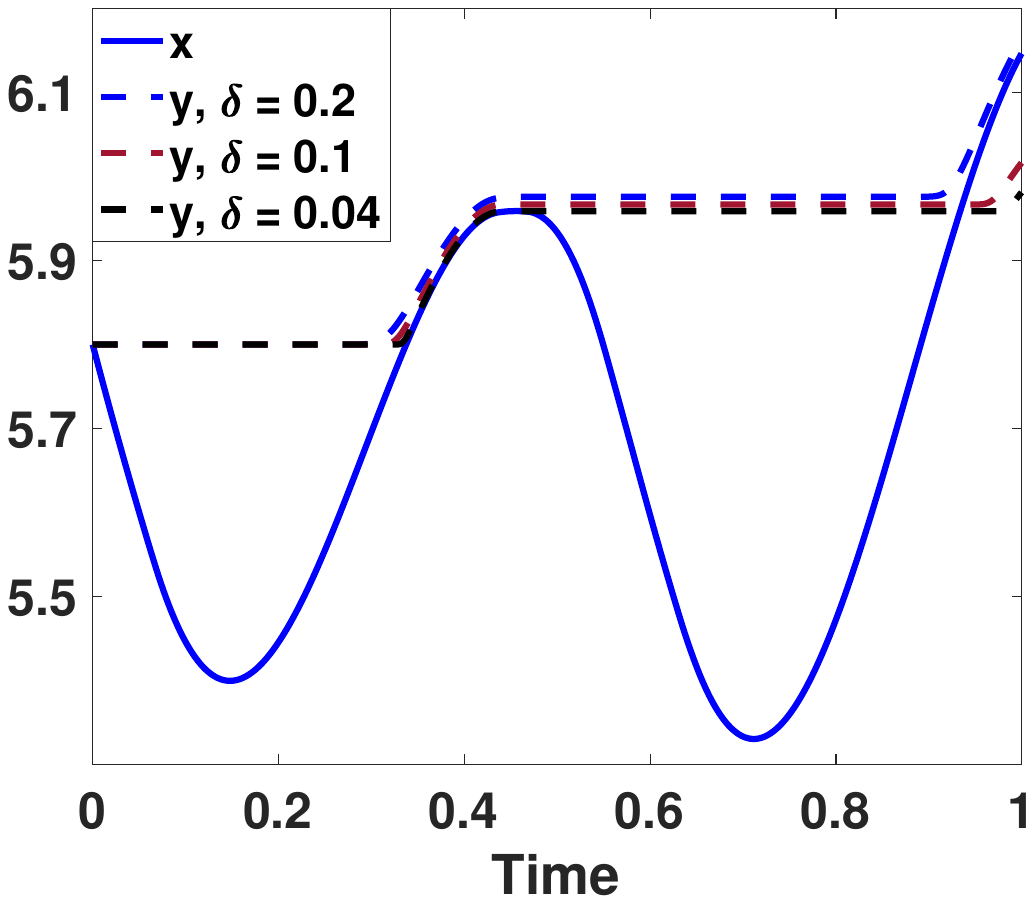}
    \end{minipage}
    \hspace{0.2cm}
        \begin{minipage}{5.3cm}
\includegraphics[trim = {0.4cm 13cm 0cm 0.8cm}, clip, width = 5.5cm, height = 4.70cm]{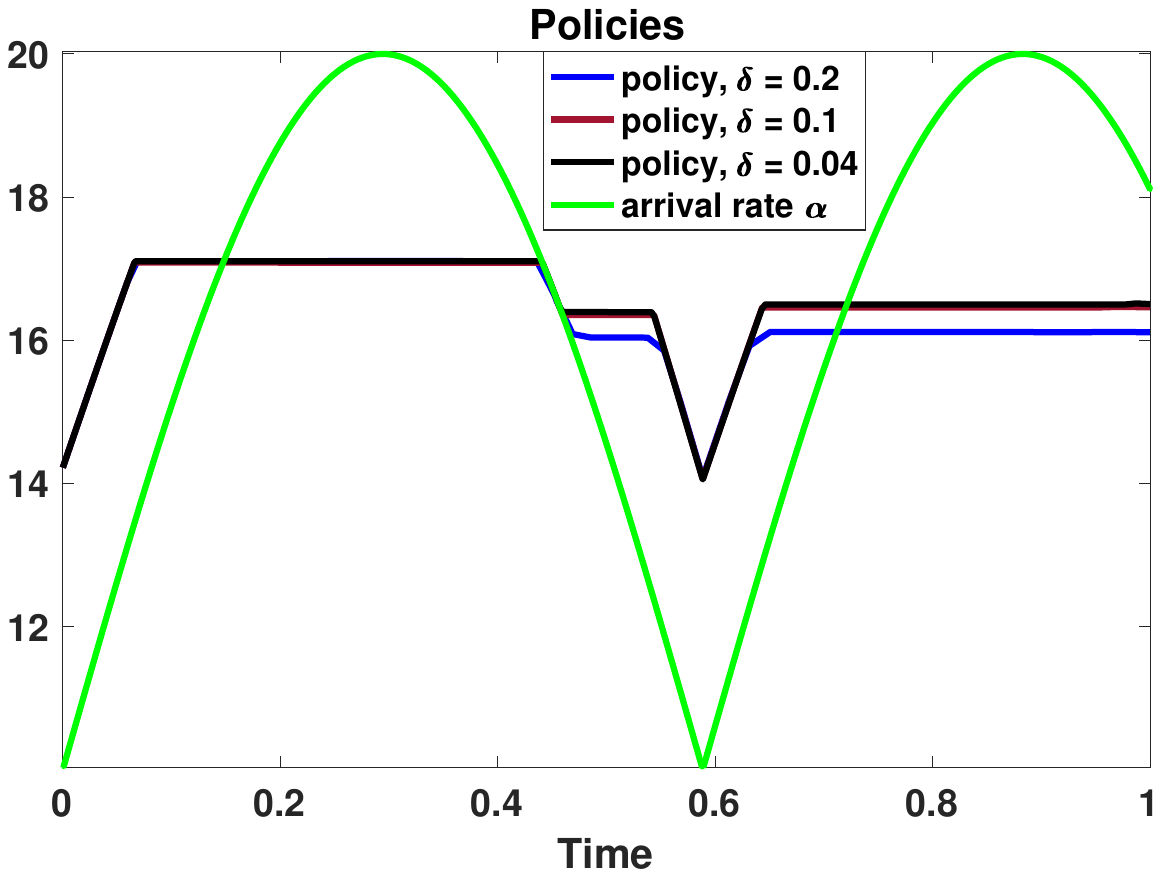}
    \end{minipage}
    \hspace{2mm}
        \begin{minipage} {3.2cm}
{\small \begin{tabular}{|l|l|l|l|l|}
\hline
 $\delta$ &  $y^{\delta*}(t_1)$  \\ \hline
 0.2 &  6.1631 \\ \hline
 0.1& 6.0166  \\ \hline
0.04& 5.9814  \\ \hline
\end{tabular}}
    \end{minipage}
    \caption{ State trajectories $(x^*(t), y^{\delta*}(t))$ and optimal policy $u^*(t)$, when $\beta = 11.35, \rho= 0.0083, t_1=1, \bar{u} = 0.9,  \eta = 1, \mu_{id} = 11.5, \alpha (t) = 10(1+ \cos{|\frac{3 \pi }{2} +1.7 \pi t|}). $}
    \vspace{-3mm}
    \label{Fig_y_delta_converge}
\end{figure*}
The left sub-figure shows that the state
trajectories corresponding to the three values of $\delta$ are already very
close, and the trajectories become progressively more aligned as $\delta$
decreases. A similar behaviour is observed for the optimal policies in the
middle sub-figure, where the policies for $\delta=0.1$ and $\delta=0.04$ are
nearly indistinguishable.

The terminal values $y^{\delta *}(t_1)$, reported in the accompanying table,
also exhibit a stabilizing behaviour as $\delta$ decreases. In particular, the
change in the terminal peak estimate becomes substantially smaller when
$\delta$ is reduced from $0.1$ to $0.04$ than when it is reduced from $0.2$ to
$0.1$. This provides numerical evidence that the solutions of the smooth
problems are approaching a limiting solution as $\delta\to0$.

Overall, the state trajectories, optimal policies, and terminal peak estimates
suggest that the effect of smoothing becomes negligible for sufficiently small
$\delta$, which is consistent with the convergence result established above.

\ignore{

\begin{figure*}[htbp]
\hspace{-0.1cm}
    \centering
    \begin{minipage}{4.5cm}
\includegraphics[trim = {4cm 13.cm 3cm 1cm}, clip, width = 5cm, height = 4.cm]{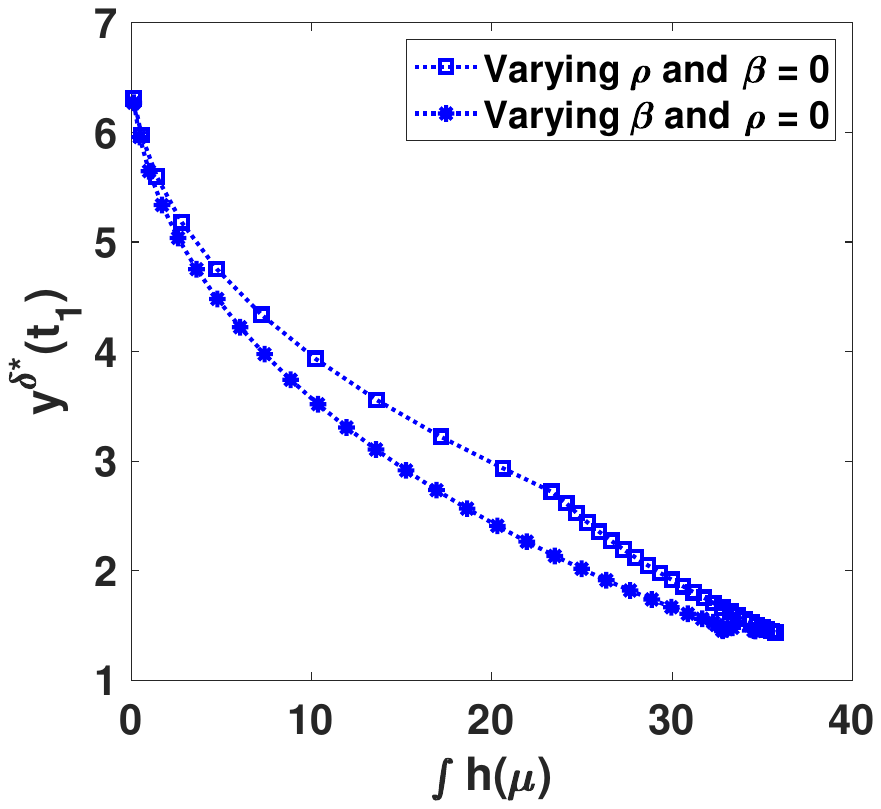}
    \end{minipage}
    \hspace{0.2cm}
        \begin{minipage}{5cm}
\includegraphics[trim = {0cm 12.7cm 4.7cm 1cm}, clip, width = 5cm, height = 4.04cm]{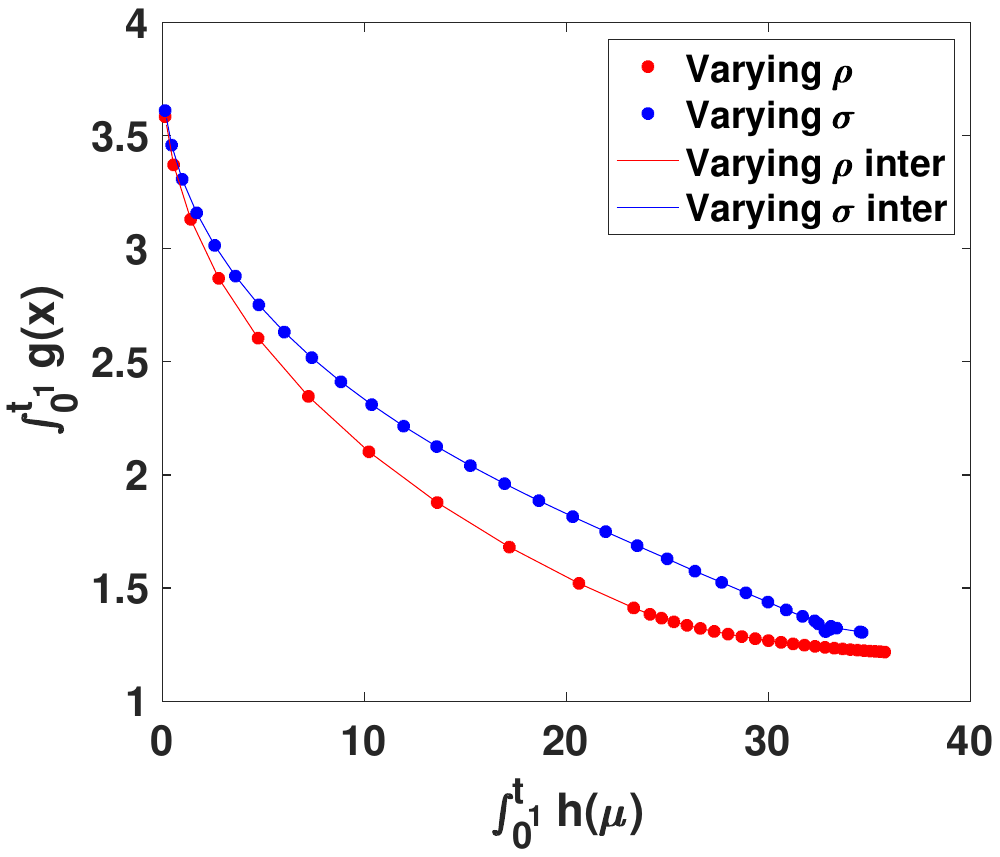}
    \end{minipage}
     \hspace{0.2cm}
        \begin{minipage}{5cm}
\includegraphics[trim = {3cm 12.5cm 3.4cm 0.4cm}, clip, width = 5cm, height = 4.04cm]{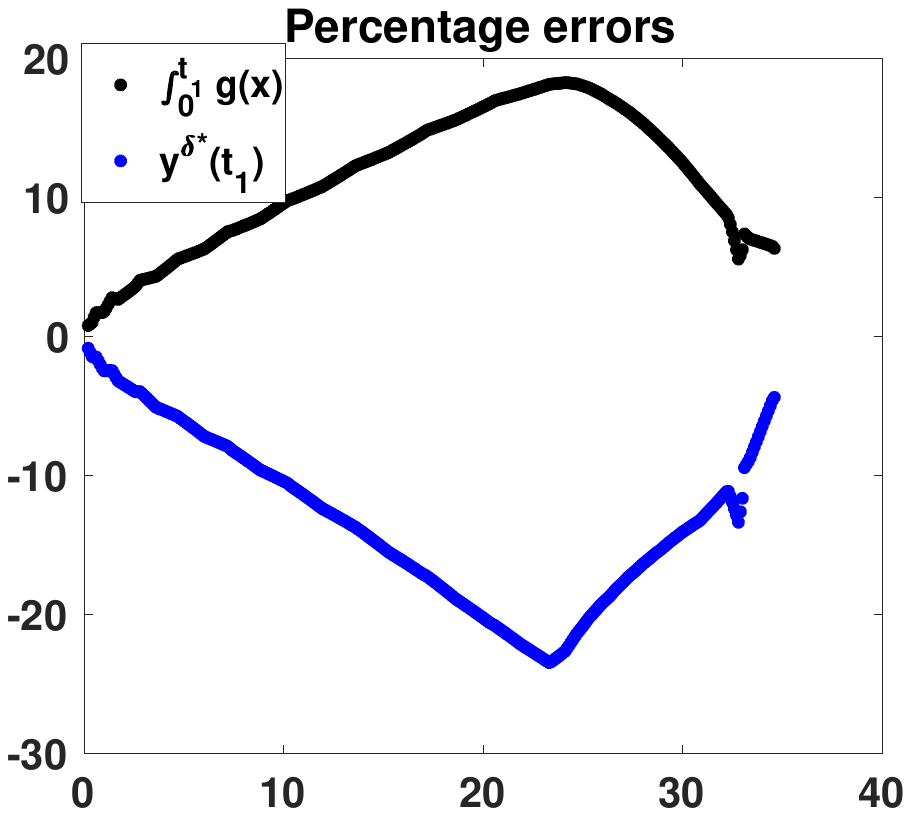}
    \end{minipage}
    \caption{ Difference in Pareto frontiers:  $\delta = 0.2, \bar{u} = .95,   \alpha(t) = 1+|\cos\left(\frac{3 \pi}{2} + t \frac{1.7 \pi }{3000} \right)|, t_1=1, \mu_{id} = 11.5$.  
    }
\label{Fig_pareto_with_non_zero_mu_ideal}
\vspace{-3mm}
\end{figure*}}

\ignore{

\begin{thm} {\color{magenta}[Continuity of $y^\delta$]} Fix $\sigma$ and $L_{mx} \equiv x$. Then 
$$\sup_{s \in T}\left|y^{\delta_n}(s) -  y^{0}(s)\right| \to 0, \mbox{ when } \delta_n \to  0.$$  
\end{thm}

\textbf{Proof:}
We begin with some  definitions that are  used throughout the proof. 
Define $\Dw_0  := 0,$ and then the  following terms  for each  $i \ge 1$, recursively,  
\begin{eqnarray}
    \Up_i &:=& \inf\{ t > \Dw_{i-1}: x(t)> y^\delta(t) - \delta, \ \bdot{x}(t) >0 \} = \inf\{ t > \Dw_{i-1}:  \bdot{y}^\delta(t)  >0 \}, \nonumber\\
    \Dw_i &:=& \inf\{ t > \Up_i: x(t) \le  y^\delta(t) - \delta \mbox{ or }  \bdot{x}(t) \le  0 \}=  \inf\{ t > \Up_i:  \bdot{y}^\delta(t)  =0 \}.  
    \label{Eqn_definition_ui_di}
\end{eqnarray}

Towards proving the theorem, it   suffices to prove the following, for any $\delta> 0$,
\begin{eqnarray}
&&y^0(t) \le     y^\delta (t) \le y^0 (t) + \delta,  \mbox{ for all } t \in T, \mbox{ and, } \nonumber\\
&&x(t) \ge  y^\delta (t)- \delta,  \mbox{ for all } t \in \cup_{i \ge 1} [\Dw_{i-1}. \Up_i],  \label{Eqn_Sw}
\end{eqnarray}
As the functions are absolutely continuous,  one can have atmost countably many  $\{\Dw_i, \Up_i\}_i$ (see \cite{royden2010real}).
%
Thus, the result is derived using
 mathematical induction on $i$---with  $\I_\ell := (\Up_\ell, \Dw_\ell) \cup [\Dw_\ell, \Up_{\ell+1}]$ for each $\ell$,  we prove \eqref{Eqn_Sw}     for all $t \in   \I_i$,  assuming the  same for  all previous intervals, i.e., for all $t \in \cup_{\ell < i} \I_\ell$. 
 The proof in each induction step  is derived using the following  two major parts:
\begin{enumerate}
    \item [1.]   We  will show that $x(t) \in [y^\delta(t) - \delta, y^\delta(t)]$ and   $y^0(t) \in [x(t), y^\delta(t)]$, for all $t \in \cup_i(\Up_i, \Dw_i)$. 
\item [2.] Second step is to show that   
$y^\delta(t) = y^\delta (\Dw_i)$ and $y^0(t) = y^0 (\Dw_i)$ for  $t \in [\Dw_i, \Up_{i+1}]$ for any $i$ (both the trajectories remain constant in these intervals). 
\end{enumerate}
Further we will see that the initial step (or the base case) of the induction proof follows exactly as any  other inductive step---if  $\Up_1 = 0$,  the initial step  is merged with  part (a), otherwise with part (b). 
 
{\bf Step 1:} Consider   $(\Up_i, \Dw_i)$  for some  $i$ and assume  \eqref{Eqn_Sw} is satisfied for all $t \le \Up_i$, which implies the following at $t = \Up_i$:
$$
y^0(\Up_i) \in [y^\delta(\Up_i) - \delta, y^\delta(\Up_i)] \mbox{ and } x(\Up_i)  \ge y^\delta(\Up_i) - \delta.
$$ 
Observe the above conditions are satisfied even for the first interval,   when $\Up_1 = 0$, as the initial conditions  are  $y^0(0) = y^\delta(0) = x(0)$ and the proof of the induction step remains the same even for such  an initial step. Hence we provide one proof as below.

To begin with, from \eqref{Eqn_definition_ui_di},
  $ x(t) >  y^\delta (t) - \delta$ and hence 
  \begin{eqnarray}
  \label{Eqn_one_dir}
      y^0(t) \ge x(t) >  y^\delta (t) - \delta \mbox{ for all } t \in (\Up_i, \Dw_i). 
  \end{eqnarray}
  If further  $y^0$ does not change, i.e., if  $y^0(t) = y^0 (\Up_i)$,   for all  $ t \in (\Up_i, \Dw_i)$, then  \eqref{Eqn_Sw} is satisfied for this interval $(\Up_i, \Dw_i)$--- since $y^\delta$ is non-decreasing,  $y^\delta (t) \ge y^\delta (\Up_i) \ge y^0(\Up_i) = y^0(t)$, for such $t$.  And this part of the inductive step is proved. 
  
Next, consider the case when $y^0$ changes:  there exists an  $ s \in  [\Up_i, \Dw_i)$ such that,
\begin{eqnarray}
\label{Eqn_y0_in_ui_to_di}
 \hspace{10mm}   y^0(\Up_i) =  y^0(t) > x(t) \mbox{ for  } t \in (\Up_i, s) \mbox{ and } \ 
    y^0(t) =  x(t),  \mbox{ for all } t \in  [s, \Dw_i) . 
\end{eqnarray}

From the definitions in \eqref{Eqn_definition_ui_di},
   the derivative  $\bdot{y}^\delta (t) > 0$,  for all $t \in (\Up_i, \Dw_i)$; further from \eqref{eq:ydelta}, we have  $0< \bdot{y}^\delta (t) \le \bdot{x}(t)$. By virtue of the smaller derivative for~$y^\delta$, we only have  the following two possibilities in this interval and sub-case \eqref{Eqn_y0_in_ui_to_di}. 

\begin{enumerate}[(a)]
    \item  The trajectory $y^\delta$ remain bigger, i.e.,  $y^\delta(t) > x(t)$ for all $t \in (\Up_i, \Dw_i)$.

    Since $y^\delta$ is non-decreasing,   we have $y^0(t) = y^0(\Up_i) \le y^\delta (\Up_i) \le y^\delta (t)$ for all $t \in (\Up_i, s)$, for  $s$  of \eqref{Eqn_y0_in_ui_to_di}; and for $t \in [s, \Dw_i)$, we again have 
    $y^\delta (t) > x(t) = y^0(t)$. In all, \eqref{Eqn_Sw} is again satisfied in  $(\Up_i, \Dw_i)$ for this condition also, see \eqref{Eqn_one_dir}. 
\item  There exists an $s' \in (\Up_i, \Dw_{i})$ such that   $y^\delta(t) > x(t)$ for $t \in (\Up_i, s')$ and $y^\delta(t) = x(t)$  for $t\in [s', \Dw_i)$.

Now for $t < s'$, \eqref{Eqn_Sw}  is satisfied as in previous $(a)$ case; and at $s'$,  we have 
$x(s') \le y^0(s') \le y^\delta(s') = x(s')$, implying $x(s') = y^\delta(s') = y^0(s')$;  finally for $t \in  (s' , \Dw_i)$,  \eqref{Eqn_Sw} is satisfied with
$y^\delta (t) = x(t) = y^0(t)$  (again using \eqref{Eqn_one_dir} and note here  $s \le s'$).

\end{enumerate}
In summary,  \eqref{Eqn_Sw} is satisfied for all $t \le \Dw_i$,  by continuity and further using the induction assumption.

{\bf {Step 2:}} Now consider  $(\Dw_i, \Up_{i+1})$  for some  $i$ and assume  \eqref{Eqn_Sw} is satisfied for all $t \le \Dw_i$, which implies: 
$$
y^0(\Dw_i)  = [y^\delta(\Dw_i),  y^\delta(\Dw_i) +\delta] \mbox{ and } y^\delta (\Dw_i) -\delta < x(\Dw_i).
$$
Again the initial step also satisfies the above because of the given initial conditions, when  $\Up_1 > 0 = \Dw_0$

For any $t \in [\Dw_{i}, \Up_{i+1}]$, by definition $\bdot{y}^\delta = 0$ and 
thus $y^\delta (t) = y^\delta (\Dw_{i}^\delta)$, for all such $t$.  Further, $x (t) < y^\delta (t) - \delta$ for $t \in (\Dw_{i}, \Up_{i+1})$, by definition of $\Up_i$ in \eqref{Eqn_definition_ui_di}. Therefore,  
 \begin{eqnarray*}
     y^0(t) \ge y^0 (\Dw_i) \ge y^\delta (\Dw_i ) - \delta = y^\delta (t)-\delta  > x(t), \mbox{ for all such } t   
 \end{eqnarray*}
 implying $y^0$ trajectory never meets $x$ trajectory or that $y^0(t) = y^0 (\Dw_i)$ for $t \in (\Dw_{i}, \Up_{i+1})$.
 
In summary, the comparison status at $\Dw_i$ continues for all $t \in [\Dw_i, \Up_{i+1}]$, hence $y^\delta(t) \in [y^0(t), y^0(t) + \delta]$  for such $t$,  and so \eqref{Eqn_Sw} is satisfied for all $t \le \Up_{i+1}$, after using the induction assumption. \eop

As $\y$ is continuous in $\sigma$, thus, if $\sigma_n \to \sigma$ in weak topology, imply,  for any fixed $\delta_n$
$$\sup_{s \in T}\left|y^{(\delta_n, \sigma_n)}(s) -  y^{(\delta_n, \sigma)}(s)\right| \to 0. $$ 
Thus, 
\begin{eqnarray*}
    \sup_{s \in T}\left|y^{(\delta_n, \sigma_n)}(s) -  y^{(0, \sigma)}(s)\right| &\le& \sup_{s \in T}\left|y^{(\delta_n, \sigma_n)}(s) -  y^{(\delta_n, \sigma)}(s)\right| +  \sup_{s \in T}\left|y^{(\delta_n, \sigma)}(s) -  y^{(0, \sigma)}(s)\right| \\
    & \rightarrow& 0, \mbox{ as } n \to \infty. 
\end{eqnarray*}}

\Remove{
We investigate the same in immediate next. 
\section{Dynamic programming equation}
\label{sec_DP} In this section, we relabel the control space $\U$ as $\U(T)$.  
We begin with   introducing a mapping $\vee$ below, for any fixed  $r \in [\tau, t_1]$ and $y$ 
\begin{eqnarray*}
    &&\vee_{r, y}^{t_1} (s; \x) :=\max\left \{ y,  \sup_{t \in [r, s]} L_{mx} (t, x(t) ) \right  \} \mbox{ for all } s \in [r, t_1], \mbox{ and } \vee_{r, y}^{t_1} (t_1; \x):= \vee_{r, y}^{t_1} ( \x). 
\end{eqnarray*}   
This leads to the following control problem with objective function:
\begin{equation}
    J( \tau, x, y;\u) =   \int_{\tau}^{t_1} L_r(s, x(s), u(s))ds - \vee_{\tau, y}^{t_1} ( \x) + \Psi(x(t_1)). 
\label{Eqn_objective_fun}
\end{equation}
Observe that the second term depends on entire trajectory $\x := \{ x(s)\}_{s}$ driven by control trajectory $\u := \{ u(s)\}_{s}$, with initial condition $x(\tau) = x$. 
Thus, define the \textit{value function} by 
\begin{equation}
    v(\tau, x,y) = \sup_{\u  \in \mathcal{U}([\tau, t_1]) } J(\tau, x, y;\u). 
    \label{Eqn_value_fun}
\end{equation}

 We have now converted the control problem in standard framework, we will establish well known principles of optimal control theory. One of them is dynamic programming principle. Proof is in Appendix \ref{App_lem_dp}. 
\begin{lemma}
\label{Lem_value_fun_ineq} For every initial condition $(\tau, x,y)$ and  $r \in [\tau, t_1]$, control $\u \in \mathcal{U}([\tau, r])$, we have
\begin{equation}
    v(\tau,x,y)  \ge    \left\{ \int_\tau^r L_r(t, x(t), u(t)) dt + v(r, x(r), z) \right\}, \mbox{ where } z_{\u} = \vee_{\tau, y}^r (\x).
    \label{Eqn_value_ineq1}
\end{equation}
where $\x$ is the state  trajectory corresponding to control $\u$ with initial condition $x(\tau) = x$. 
\end{lemma}
\begin{thm}{\textbf{[Dynamic Programming]}}
    For every initial condition $(\tau, x,y)$ and $r \in [\tau, t_1]$, we have 
    \begin{equation}
    \label{Eqn_DP}
        v(\tau,x,y)  = \sup_{\u \in \mathcal{U}([\tau, r])} \left\{ \int_\tau^r L_r(t, x(t), u(t)) dt + v(r, x(r), z_\u) \right\}, \mbox{ where } z_\u = \vee_{\tau, y}^r (\x). 
    \end{equation}
\end{thm}
\textbf{Proof:} Consider any $\delta > 0$ and from \eqref{Eqn_value_fun}, it is possible to choose a $\u \in \mathcal{U}([\tau, t_1])$ such that:
$$
\begin{aligned}
\delta +v(\tau, x, y) & \leq J(\tau, x, y; \u) \\
& =\int_\tau^r L_r(s, x(s), u(s)) d s+\int_{r}^{t_1} L_r(s, x(s), u(s)) d s +\Psi( x(t_1)) \\
& \hspace{10mm}-  \max \left\{ y, \sup_{t \in [\tau, r]} L_{mx}(t, x(t)), \sup_{t \in [r, t_1]} L_{mx}(t, x(t)) \right\} \\
& {=}\int_\tau^r L_r(s, x(s), u(s)) d s+\int_{r}^{t_1} L_r(s, x(s), u(s)) d s \\
& \hspace{10mm} -  \max \left\{ z_\u, \sup_{t \in [r, t_1]} L_{mx}(t, x(t)) \right\}+\Psi( x(t_1)) \\
& {=} \int_\tau^r L_r(s, x(s), u(s)) d s+J(r , x(r), z_\u ; \u'), \quad (\mbox{where } \u' := \u|_{[r,t_1]}) \\
& {\leq} \int_\tau^r L_r(s, x(s), u(s)) d s+ v(r, x(r),z_\u).
\end{aligned}
$$
Since $\delta$ is arbitrary, we have proved \eqref{Eqn_DP}, further using 
Lemma \ref{Lem_value_fun_ineq}. \eop

The dynamic programming relation in \eqref{Eqn_DP} leads to a non-linear PDE with discontinuities induced by the supremum term. At present, the existence of solutions to the PDE remains an open question. However, one may attempt to characterize solutions to the associated HJB equation using the framework of Filippov-type solutions, as developed in \cite{barles2013bellman, rao2013hamilton}. We leave this direction as a topic for future research.}


\ignore{

\begin{lemma}{[Continuity of $\vee$]}
\label{lem:vee_continuity}
Assume that $L_{mx}:T\times\mathbb{R}^n\to\mathbb{R}$ is bounded and
uniformly continuous in $(t,x)$, and let  $K_\infty$ be an increasing function such that
\[
\big|L_{mx}(t,x)-L_{mx}(s,z)\big|
\le K_\infty(|t-s|+|x-z|).
\]
Let $\x(\cdot) = \{x(s) \}_s$ and $\tilde \x(\cdot) = \{\tilde{x}(s) \}_s$ be two continuous trajectories on $[\tau,T]$ with $\x(\tau) = x_\tau$ and $\tilde{\x}(\tau) = \tilde{x}_\tau$, which further
satisfy
\begin{equation}
\label{eq:traj_estimate}
|x(s)-\tilde x(s)|\le |x_\tau-\tilde {x}_\tau|e^{K(s-\tau)}, \qquad s\in[\tau,T],
\end{equation}
for some constant $K>0$. Then,
\begin{equation}
\label{eq:vee_continuity}
\big|
\vee_{\tau,y}^{t_1}(\x)-\vee_{\tau,y}^{t_1}(\tilde{\x})
\big|
\le
K_\infty\!\left(|x_\tau-\tilde {x}_\tau|e^{KT}\right).
\end{equation}
\end{lemma}

\medskip
\noindent \textbf{Proof:}
By definition, for the two trajectories $\x$ and $\tilde{\x}$ given in the hypothesis 
\[
\vee_{\tau,y}^{t_1}(\x)
=
\max\Big\{
y,\,
\sup_{t\in[\tau,T]} L_{mx}(t,x(t))
\Big\},
\qquad
\vee_{\tau,y}^{t_1}(\tilde{\x})
=
\max\Big\{
y,\,
\sup_{t\in[\tau,T]} L_{mx}(t,\tilde x(t))
\Big\}.
\]
By continuity of $\x$ and $\tilde \x$,   $A:=\sup_{t\in[\tau,T]} L_{mx}(t,x(t)) = \max_{t\in[\tau,T]} L_{mx}(t,x(t))$ and $
B:=\max_{t\in[\tau,T]} L_{mx}(t,\tilde x(t))$.
Using the  inequality
$$|\max\{y,A\}-\max\{y,B\}|\le |A-B|,$$ we obtain
$\big|
\vee_{\tau,y}^{t_1}(\x)-\vee_{\tau,y}^{t_1}(\tilde{\x})
\big|
\le
\big|A-B\big|$.
Moreover, for any bounded functions $f$ and $g$,
\[
\big|\max_t f(t)-\max_t g(t)\big|
\le
\max_t |f(t)-g(t)|.
\]
Applying this inequality yields
\[
|A-B|
\le
\max_{t\in[\tau,T]}
\big|
L_{mx}(t,x(t)) - L_{mx}(t,\tilde x(t))
\big|.
\]
By uniform continuity of $L_{mx}$,
\[
\big|
L_{mx}(t,x(t)) - L_{mx}(t,\tilde x(t))
\big|
\le
K_\infty(|x(t)-\tilde x(t)|),
\qquad \forall\, t\in[\tau,T].
\]
Using \eqref{eq:traj_estimate} and monotonicity of $K_\infty$, we obtain
\[
K_\infty(|x(t)-\tilde {x}(t)|)
\le
K_\infty(|x_\tau-\tilde {x}_\tau|e^{KT}),
\qquad \forall\, t\in[\tau,T].
\]
Therefore,
$|A-B|
\le
K_\infty(|x_\tau-\tilde {x}_\tau|e^{KT})$,
which proves \eqref{eq:vee_continuity}. \eop

\begin{proposition}[Uniform and joint continuity of the value function]
\label{prop:uniform_joint_continuity}
Assume that:
\begin{itemize}
\item[(A1)] The functions $f,L$ are bounded and satisfy, for all $u\in U$,
\begin{eqnarray*}
|f(t_1,x_1,u)-f(t_2,x_2,u)|
&\le&
K_f\big(|t_1-t_2|+|x_1-x_2|\big),\\
|L_r(t_1,x_1,u)-L_r(t_2,x_2,u)|
&\le&
K_L\big(|t_1-t_2|+|x_1-x_2|\big).
\end{eqnarray*}
\item[(A2)] The terminal cost $\Psi$ is bounded and Lipschitz continuous, i.e.,
\[
|\Psi(x_1)-\Psi(x_2)|\le K_\Psi |x_1-x_2|.
\]
\item[(A3)] The function $L_{mx}$ is bounded and satisfies
\[
|L_{mx}(t_1,x_1)-L_{mx}(t_2,x_2)|
\le
K_\infty\big(|t_1-t_2|+|x_1-x_2|\big).
\]
\item[(A4)] The control set $U$ is compact.
\end{itemize}
Then the value function
\[
v(\tau,x,y)
=
\sup_{u(\cdot)\in\mathcal U([\tau,T])}
\left\{
\int_\tau^{t_1} L_r(s,x(s),u(s))\,ds
-
\sigma\,\vee_{\tau,y}^{t_1}(x)
+
\Psi(x(t_1))
\right\}
\]
is uniformly continuous on $T\times\mathbb R^n\times\mathbb R$, and hence
jointly continuous in $(\tau,x,y)$.
\end{proposition}

\textbf{Proof:}
We establish uniform continuity by showing continuity in each variable with
bounds that are uniform over the remaining variables.

\medskip
\noindent\textbf{Continuity wrt $x$.}
Fix $(\tau,x,y)$ and $(\tau,\tilde x,y)$, and let
$u(\cdot)\in\mathcal U([\tau,T])$ be arbitrary.
Denote by $x(\cdot)$ and $\tilde x(\cdot)$ the corresponding state trajectories. 
By assumption (A1) and Gr\"onwall’s inequality,
\[
|x(s)-\tilde x(s)|
\le
|x-\tilde x|e^{K_f(s-\tau)},
\qquad s\in[\tau,T].
\]
In particular,
\[
\sup_{s\in[\tau,T]}|x(s)-\tilde x(s)|
\le
|x-\tilde x|e^{K_f T}.
\]
Using assumption (A3) and Lemma~\ref{lem:vee_continuity}, it follows that
\[
\big|
\vee_{\tau,y}^{t_1}(x)-\vee_{\tau,y}^{t_1}(\tilde x)
\big|
\le
K_\infty\big(|x-\tilde x|e^{K_f T}\big).
\]
Consequently,
\begin{eqnarray*}
|J(\tau,x,y;u)-J(\tau,\tilde x,y;u)|
&\le&
\int_\tau^{t_1}
|L_r(s,x(s),u(s))-L_r(s,\tilde x(s),u(s))|\,ds
\\
&&\quad
+\sigma\big|
\vee_{\tau,y}^{t_1}(x)-\vee_{\tau,y}^{t_1}(\tilde x)
\big|
+
|\Psi(x(t_1))-\Psi(\tilde x(T))|
\\
&\le&
T K_L\big(|x-\tilde x|e^{K_f T}\big) +
\sigma K_\infty\big(|x-\tilde x|e^{K_f T}\big)+
K_\Psi\big(|x-\tilde x|e^{K_f T}\big).
\end{eqnarray*}
Taking the supremum over all admissible controls yields
\[
|v(\tau,x,y)-v(\tau,\tilde x,y)|
\le
C_x |x-\tilde x|,
\]
where $C_x>0$ is independent of $(\tau,x,y)$.

\medskip
\noindent\textbf{Continuity wrt $\tau$.}
Fix $(\tau,x,y)$ and $r>\tau$.
Let $u(\cdot)\in\mathcal U([\tau,T])$ and restrict it to $[r,T]$.
Then
\[
|J(\tau,x,y;u)-J(r,x,y;u)|
\le
\int_\tau^r |L_r(s,x(s),u(s))|\,ds
+
|J(r,x(r),y;u)-J(r,x,y;u)|.
\]
By boundedness of $L$ and the estimate obtained above for the state trajectory,
there exists $C_\tau>0$ such that
\[
|J(\tau,x,y;u)-J(r,x,y;u)|
\le
C_\tau |r-\tau|.
\]
Taking the supremum over $u(\cdot)$ gives
\[
|v(\tau,x,y)-v(r,x,y)|\le C_\tau |r-\tau|.
\]

\medskip
\noindent\textbf{Continuity wrt $y$.}
For fixed $(\tau,x)$ and any $y,\tilde y\in\mathbb R$,
\[
|\vee_{\tau,y}^{t_1}(x)-\vee_{\tau,\tilde y}^{t_1}(x)|
\le
|y-\tilde y|,
\]
which implies
\[
|v(\tau,x,y)-v(\tau,x,\tilde y)|
\le
\sigma |y-\tilde y|.
\]

\medskip
\noindent
Combining the above estimates, there exists a constant $C>0$ such that
\[
|v(\tau,x,y)-v(\tilde\tau,\tilde x,\tilde y)|
\le
C\big(|\tau-\tilde\tau|+|x-\tilde x|+|y-\tilde y|\big)
\]
for all $(\tau,x,y)$ and $(\tilde\tau,\tilde x,\tilde y)$ in
$T\times\mathbb R^n\times\mathbb R$.
Therefore, $v$ is uniformly continuous on this domain.
Since uniform continuity implies joint continuity, the proof is complete.
 \eop

\begin{thm}
Assume the existence of optimal control $\u^*$. Then, the value function $v$ is the unique Lipschitz continuous viscosity solution of the Hamilton--Jacobi--Bellman equation \eqref{Eqn_hjb}.
\end{thm}
\textbf{Proof:} We divide the proof into four parts: (i) viscosity sub-solution property, (ii) viscosity super-solution property, (iii) terminal condition, and (iv) uniqueness.

\medskip
\noindent\textbf{Viscosity sub-solution property.}
Fix $(t,x,y)\in[0,T)\times\mathbb R^d\times\mathbb R$.
Let $\phi\in C^1(T\times\mathbb R^d\times\mathbb R)$ be a test function such that
\[
(v-\phi)(t,x,y)=0
\quad\text{and}\quad
(v-\phi)(s,z,w)\le0
\]
for all $(s,z,w)$ in a neighborhood of $(t,x,y)$.
By definition, $\phi$ touches $v$ from above at $(t,x,y)$.

Let $u^*(\cdot)$ be the optimal control for initial data $(t, x, y)$ and let $x^*(\cdot)$ be the corresponding state  with $x^*(t) = x$. By the Dynamic Programming Principle, for every $h>0$ sufficiently small,
\begin{equation}
v(t,x,y) = 
\int_t^{t+h} L_r(s,x^*(s),u^*(s))\,ds + v(t+h,x^*(t+h),\vee_{t, y}^{t+h} (\x^*)).
\label{eq:DPP-sub}
\end{equation}
Since $\phi\ge v$ in a neighborhood of $(t,x,y)$, i.e., 
$$\phi(t+h,x^*(t+h),\vee_{t, y}^{t+h} (\x^*)) \ge v(t,x^*(t),\vee_{t, y}^{t} (\x^*))$$ 
and $\phi(t,x,y)=v(t,x,y)$, we may replace $v$ by $\phi$ on the right-hand side of
\eqref{eq:DPP-sub}, obtaining
\begin{equation}
0 \le \int_t^{t+h} L_r(s,x^*(s),u^*(s))\,ds + \phi (t+h,x^*(t+h),\vee_{t, y}^{t+h} (\x^*)) - \phi(t,x^*(t),\vee_{t, y}^{t} (\x^*).
\label{eq:sub-phi}
\end{equation}
We now expand the test function.
Since $\phi$ is continuously differentiable and $(x(\cdot),\vee(\cdot))$ are continuous,
a first-order Taylor expansion yields
\begin{eqnarray*}
\phi(t+h,x(t+h),\vee_{t, y}^{t+h} (\x))) &= &\phi(t,x,y)
+h\,\phi_t(t,x,y) +h\,D_x\phi(t,x,y)\cdot f(t,x,u)
\\
&& +h\,D_y\phi(t,x,y)f_y(t,x,u)\,
\mathds{1}_{\{y=L_{mx}(t,x),\,f_y(t,x,u)>0\}} +o(h^2),
\end{eqnarray*}
where $o(h^2)/h\to0$ as $h\downarrow0$. Substituting the expansion into \eqref{eq:sub-phi}, dividing by $h$, and letting $h\downarrow0$, we obtain
\begin{eqnarray*}
    0 &\le& L_r(t,x,u^*) + \phi_t(t,x,y) + D_x\phi(t,x,y)\cdot f(t,x,u^*) \\
    && \hspace{50mm}+ D_y\phi(t,x,y)\,f_y(t,x,u^*)\, \mathds{1}_{\{y=L_{mx}(t,x),\,f_y(t,x,u^*)>0\}}.
\end{eqnarray*}
Since the control $u^*$ is optimal, we have 
\begin{eqnarray*}
   && \phi_t(t,x,y) +\sup_{u\in U}
\left\{ D_x\phi(t,x,y)\cdot f(t,x,u)  \right.\\
&& \left. \hspace{40mm} +
D_y\phi(t,x,y)\,f_y(t,x,u)\,
\mathds{1}_{\{y=L_{mx}(t,x),\,f_y(t,x,u)>0\}}+ L_r(t,x,u)\right\} \le 0.
\end{eqnarray*}
This proves that $v$ is a viscosity sub-solution of \eqref{Eqn_hjb}.

\medskip
\noindent\textbf{Viscosity super-solution property.}
Let $\phi\in C^1$ touch $v$ from below at $(t,x,y)$.
Fix $\varepsilon>0$.
By the Dynamic Programming Principle, there exists an $\varepsilon$-optimal control
$\u^\varepsilon\in\mathcal U([t,T])$ such that
\[
v(t,x,y)
\ge
\int_t^{t+h} L_r(s,x^\varepsilon(s),u^\varepsilon(s))\,ds
+
v(t+h,x^\varepsilon(t+h),\vee_{t, y}^{t+h}(\x^\varepsilon))
-
\varepsilon h.
\]
Repeating the previous expansion and letting $h\downarrow0$, then $\varepsilon\downarrow0$,
yields the reverse inequality, proving the viscosity super-solution property.

\medskip
\noindent\textbf{Terminal condition.}
At $t=T$, by definition of the supremum functional,
$\vee_{T,y}^{t_1}(\x)= \max\{y, L_{mx}(T, x(T))\} $,
and hence $v(T,x,y) = -\sigma \max\{y, L_{mx}(T, x)\}+\Psi(x)$,
which establishes the terminal condition in the viscosity sense. \eop

\begin{thm}
    \begin{enumerate}
        \item 
 Let $W(\cdot , \cdot, \cdot)$ be the solution of the following boundary value problem:
      \begin{eqnarray*}
           && \hspace{-10mm} v_t +  \sup_u \left \{ v_x \cdot f(t, x, w) +   v_y  \cdot f_y(t,x,w) \mathds{1}_{\{y =  L_{mx}(t,x), f_y(t,x,w)>0\}}+   L_r(t, x, w)      \right \} = 0, \label{Eqn_hjb}\\
            && \hspace{5cm}\mbox{ with }  v(T, x, y) = - \sigma \max\{y, L_{mx}(T, x) \}+ \Psi(x). \nonumber
     \end{eqnarray*}
     Then $W (t,x, y) \ge v (t,x, y)$ for all $(t,x, y)$. 
     \item  If there exists a $\u^* \in \mathcal{U}([\tau, t_1])$ such that   with $\x^*$ representing the ODE solution under $\u^*$ with  some initial condition $(\tau, x, y)$  and for almost all $ t \in [\tau, t_1]$
           \begin{eqnarray}
             && \hspace{-1cm}\sup_{u \in U} \left\{v_x f(\tau,x,u)  + v_y f_y(t,x,u) \mathds{1}_{\{y =  L_{mx}(t,x), f_y(t,x,u)>0\}}+L_r(t,x,u)\right\} \nonumber \\
              && \hspace{1cm}= v_x f(\tau,x^*,u^*)  + v_y f_y(t,x^*,u^*) \mathds{1}_{\{y =  L_{mx}(t,x^*), f_y(t,x^*,u^*)>0\}}+L_r(t,x^*,u^*), \label{Eqn_verification_cond}
     \end{eqnarray}
    then $u^*$ is optimal for that $(\tau, x, y)$ and $W(\tau, x, y) = v(\tau, x, y)$.  
    \end{enumerate}
    \end{thm}
   \textbf{Proof:}
Consider any $\u \in \mathcal{U}([t,T])$ and corresponding state trajectory $\x$. Using multivariate calculus and the dynamic programming equation \eqref{Eqn_DP}, we obtain
\begin{eqnarray}
&&W\left(T, x\left(T\right), \vee_{t, y}^{t_1} (\x)\right)=  W(t, x, y)+\int_t^{T}\left[\frac{\partial}{\partial t} W(s, x(s), \vee_{t, y}^{t_1} (s;\x)) \right. \nonumber\\
&& \hspace{40mm}+f(s, x(s), u(s)) \cdot D_x W(s, x(s), \vee_{t, y}^{t_1} (s;\x))\nonumber \\
&& \hspace{40mm}\left. + f(s, x(s), u(s)) \cdot D_y W(s, x(s), \vee_{t, y}^{t_1} (s;\x)) \mathds{1}_{\{y =  L_{mx}(s,x), f_y(s,x,w)>0\}}\right] d s\nonumber\\
&&  \hspace{33mm}\leq W(t, x,y)-\int_t^{T} L_r(s, x(s), u(s)) d s. \quad (\mbox{using \eqref{Eqn_DP}})\label{Eqn_verifi_part1}
\end{eqnarray}
Also, $W\left(T, x\left(T\right), \vee_{t, y}^{t_1} (\x) \right)= - \sigma \max\{y, L_{mx}(T,x(T))\} + \Psi(x(t_1))$. Hence
\begin{equation}
W(t, x,y) \geq J(t, x,y ; \u) .
\label{Eqn_verif_part1_result}
\end{equation}
We get \eqref{Eqn_DP} by taking the supremum over $\u$.
To prove the second assertion of the theorem, let $\u^* \in \mathcal{U}([t,T])$ satisfy \eqref{Eqn_verification_cond}. We redo the calculation above with $\u^*$. This yields \eqref{Eqn_verifi_part1} with an equality. Hence
$$
W(t, x,y)=J\left(t, x,y ; \u^*\right) .
$$
By combining this equality with \eqref{Eqn_verifi_part1}, we conclude that $\u^*$ is optimal at $(t, x,y)$. \eop

\begin{thm}\textbf{[Verification Theorem]} {\color{red}[In viscosity sense]}
\label{thm:HJB_full}
Let $v(t,x,y)$ denote the value function of the augmented optimal control problem,
and assume that $v$ is differentiable wrt $(t,x)$ at the point
$(t,x,y)$ with $y \ge L_{mx} (t,x)$. Then the following statements hold.

\begin{enumerate}
\item For every $w\in\mathcal U$,
$$
v_t(t,x,y)+v_x(t,x,y) f(\tau,x,w)  + v_y(t,x,y) f_y(t,x,w) \mathds{1}_{\{y =  L_{mx}(t,x), f_y(t,x,w)>0\}}+L_r(t,x,w)\le 0.
$$

\item If there exists an optimal control policy $u^*(\cdot)$ such that $\lim_{s\downarrow t}u^*(s)=w^*\in\mathcal U,$  for some $w^*$ then 
\begin{eqnarray*}
&&\hspace{-5mm}v_t(t,x,y)+v_x(t,x,y)  f(\tau,x,w^*) \\
  &&\hspace{15mm}+ v_y(t,x,y) f_y(t,x,w^*) \mathds{1}_{\{y =  L_{mx}(t,x), f_y(t,x,w^*)>0\}}+L_r(t,x,w^*)=0.
\end{eqnarray*}
\end{enumerate}
\end{thm}

\textbf{Proof:} \underline{\emph{Proof of part (a).}} Consider $u$ such that $u(s) \to w$ as $s \to t$. 
Substituting the $r = t+h$ in Dynamic Programming Principle (DPP) (see \eqref{Eqn_value_ineq1}),
\begin{equation}
    v(t,x,y)
\stackrel{a}{\ge}
\int_t^{t+h}L_r(s,x(s),u(s))\,ds
+
v(t+h,x(t+h), \vee_{t,y}^{t+h}(\x)).
\label{Eqn_DPP_inequality}
\end{equation}
Since $v$ is differentiable wrt $(t,x)$ at $(t,x,y)$ and
$\bdot x(t)=f(t,x,w)$ and define $\bdot{L}_{mx}(t, x) =: f_y(t, x, w)$, a first-order Taylor expansion yields {\color{red}Does $v_y$ exist in viscosity sense?}
\begin{eqnarray*}
    v(t+h,x(t+h), \vee_{t,y}^{t+h}(\x))
&=&
v(t,x,y)
+
h\,v_t(t,x,y)
+
h v_x(t,x,y)\cdot f(t,x,w)  \\
&& \hspace{14mm}+ h v_y(t,x,y)\cdot f_y(t,x,w) \mathds{1}_{\{y =  L_{mx}(t,x), f_y(t,x,w)>0\}} +
o(h^2),
\end{eqnarray*}
where $o(h^2)/h\to0$ as $h\downarrow0$. Also, observe that $\lim_{h \to 0} \vee_{t,y}^{t+h}(\x) = y$.  

We now substitute this expansion into \eqref{Eqn_value_ineq1}
and subtracting $v(t,x,y)$ from both sides and dividing by $h$ gives
\begin{eqnarray*}
    && \hspace{-15mm}0 \ge \frac{1}{h}\int_t^{t+h}L_r(s,x(s),u(s))\,ds
+
v_t(t,x,y)
+ v_x(t,x,y)\cdot f(t,x,w) \\
&& \hspace{15mm}+ v_y(t,x,y)\cdot f_y(t,x,w) \mathds{1}_{\{y =  L_{mx}(t,x), f_y(t,x,w)>0\}}
+
\frac{o(h^2)}{h}.
\end{eqnarray*}
Since $L$ is continuous, the integral term converges as (here $u(s) \to w$, when $s \to t$)
$$
\frac{1}{h}\int_t^{t+h}L_r(s,x(s),u(s))\,ds
\;\longrightarrow\;
L_r(t,x,w)
\quad\text{as }h\downarrow0,
$$
and by construction $\frac{o(h^2)}{h} \to 0$. Passing to the limit $h\downarrow0$,
we obtain
$$
v_t(t,x,y)
+ v_x(t,x,y) \cdot f(t,x,w) + v_y(t,x,y) \cdot f_y (t,x,w) \mathds{1}_{\{y =  L_{mx}(t,x), f_y(t,x,w)>0\}}
+
L_r(t,x,w)
\le 0.
$$

Because the control value $w\in\mathcal U$ was arbitrary, this inequality holds
for all admissible controls, which establishes part~(a).

\medskip

\noindent \underline{\emph{Proof of part (b).}}
Assume that there exists an optimal control policy $u^*(\cdot)$ such that
$u^*(s)\to w^*$ as $s\downarrow t$.
Now, we have  equality at $(a)$ in place of inequality in \eqref{Eqn_DPP_inequality} and hence the result.  \eop
}

\section{Conclusions}
\label{sec_conclusions}

This work provides a systematic framework for analyzing optimal control problems that simultaneously account for cumulative (integral) performance and peak ($L^\infty$) behavior. While classical formulations address these criteria separately, the combined problem considered here captures a more realistic trade-off that arises in many practical systems, while solving multi-objective problems.

From a theoretical standpoint, we establish the existence of optimal solutions within the relaxed control framework and further show that classical pure controls can approximate these solutions arbitrarily well. At the same time, the non-smoothness introduced by the supremum term makes direct computation challenging and we don't even know  the applicability of standard dynamic programming techniques  using PDEs. To overcome this difficulty, we introduce a family of smooth approximations (for running peak-levels), which lead to tractable control problems---we then establish that the value under the  solutions of the   smooth problems   converges to the value of the original  problem, as smoothness factor reduces.

Overall, the results establish that the class of combined control problems considered here admits a rigorous and systematic analysis, while also permitting the development of tractable approximation schemes despite the underlying analytical difficulties. The numerical investigation of the queueing control problem further demonstrates the practical relevance of the proposed framework, revealing a clear trade-off between peak and cumulative performance. In particular, substantial reductions in peak congestion may be attained at the expense of an increase in cumulative cost and vice versa.

These findings not only extend the scope of optimal control theory but also provide useful insights for designing and managing real-world systems in which both average efficiency and peak behavior are of critical importance---our framework can be used to derive solutions for any given trade-off factor.

\section*{Dynamic Programming and Future work}

For the combined problem, we derived dynamic programming equations in integral form, similar to those in \cite[Section~I.4]{fleming2006controlled}, but omit them due to space constraints. It remains unclear whether a corresponding HJB-PDE formulation can be derived whose solutions directly characterize \eqref{Eqn_combined_problem}. An interesting  direction for future work would be to attempt  to characterize the associated HJB-PDEs using a Filippov-type solution framework, as in \cite{barles2013bellman, rao2013hamilton}.

\appendix 
\setcounter{section}{1}

\section{ Brief summary of results from \cite{warga2014optimal}}
\label{app_sec}

We will re-produce important notations, definitions and entities of  book \cite{warga2014optimal}   in our own notations and according to our requirements.

\subsection{Relaxed controls and Lemma \ref{lemma_space_compact}} 
\label{App_relaxed_control}
We begin with  some definitions of \cite[Section IV, pg. 263]{warga2014optimal}.
We adopt standard notations commonly used in the optimal control community, which differ from those in \cite{warga2014optimal}. To facilitate cross-referencing, we summarize the mapping between both the sets of symbols in Table \ref{Table_notations}.

Let  $C(U)$ be the space of continuous functions from $U$ to  real line $\mathbb{R}$, 
$L^1(T, C(U))$ be the space of integrable functions from $T$ to $C(U)$  (see $m_\g$ defined in Definition \ref{defn_relaxed_weak_conv}),
$$
\g \in L^1(T, C(U)), \mbox{ iff } \int_0^{t_1} \hspace{-2mm} \sup_{u \in U} |g(t, u)|   dt = \int_0^{t_1} \hspace{-2mm}  m_\g(t)   dt < \infty.
$$
Let
$L^1(T, C(U))^\star$ be its topological dual  (see Table \ref{Table_notations} for corresponding notations in \cite{warga2014optimal}).  

For any $\X$ a normed linear separable space, let $|.|_w$ represent the weak norm on its dual $\X^\star$,
\begin{equation}
 |f|_w := \sum_{i} 2^{-i} \frac{ |f(x_i)| }{1+ |f(x_i)|},   \mbox{ for all } f \in \X^\star, 
 \label{Eqn_weak_norm_fw}
\end{equation}
defined using a countable dense set $\{x_1, x_2, \cdots, \}$ of $\X$ (by well known Banach-Alaoglu theorem \cite{rudin1991functional}, unit ball in $\X^\star$ is compact under the corresponding weak-topology).  Let frm($U$)  be the vector space of   Radon measures on set $U$ and  rpm$(U) = \P(U)$ be  the set of   Radon probability measures on $U$. It is well known (Riesz
representation theorem \cite{warga2014optimal}) that  frm($U$) is  the dual of space of continuous functions, i.e.,   frm$(U) =C(U)^\star$---one can thus equip frm($U$) with weak-norm using the dense subset of $C(U)$, as in \eqref{Eqn_weak_norm_fw} and then the space of probability measures rpm$(U)$ is its subset.

Let  
$\N$ be the set of  $\muL$   or Lebesgue measurable functions   ${\mathbf \nu} : T \to (\mbox{frm}(U), |.|_w) $ such that $\mbox{ess sup}     |\nu(t)| (U)   < \infty$, where $\mbox{ess sup}$ is wrt $\muL$ measure. By \cite[Theorem  IV.1.8, pp 268]{warga2014optimal}, $\N$ is isomorphic\footnote{Each $\nu \in \N$, defines one  linear functional ${\mathdutchbcal l}_\nu $  on   $ L^1(T, C(U))$:
$$
{\mathdutchbcal l}_\nu(g):= \int_0^{t_1}\muL(dt) \int_U g(t, u) \nu(t, du)  \mbox{ for all } g \in L^1(T, C(U)),  
$$ is  a  linear functional. The mapping $\nu \mapsto {\mathdutchbcal l}_\nu$ is an isomorphism from $\N$ to  
$(L^1(T, C(U) ) )^\star$ (\cite{warga2014optimal}).} to the topological dual, $L^1(T, C(U))^\star$. Thus one can also equip $\N$ with weak-norm as in \eqref{Eqn_weak_norm_fw}, since   
$L^1(T, C(U))$ is also separable, see also \cite[Section IV.1]{warga2014optimal}. Further, $\N$ is metrizable \cite[Subsection VI.1.9]{warga2014optimal}.

Next the space of relaxed controls is defined as the following subset of $\N$, whose range is the space of probability measures (recall rpm$(U) = \P(U)$, see \cite[Page 263]{warga2014optimal}) 
\begin{eqnarray}
 \Sscr &=& \{ \nu \in \N : \nu (t) \in \mbox{rpm}(U), \mbox{ for almost all } t \in T  \} 
    \label{Eqn_calS_defn_app}\\
     &=&  \{ \bsig :T \to \P(U), \mbox{ which is measurable}\}, \nonumber
\end{eqnarray}
    which equals that defined in \eqref{eqn_calS_defn}. 

The next set of   definitions are from \cite[Section IV.3, pg. 279]{warga2014optimal}. In our case, the set of restrictions $U^\hash (t) = U $, the entire control space, for all $t$, as we don't have any restrictions (in \cite{warga2014optimal}, this is referred to as $R^\hash(t)$). 
Thus the space of relaxed controls, $\Sscr$ in \eqref{eqn_calS_defn} or \eqref{Eqn_calS_defn_app}, coincides with   $\Sscr^\hash = \Sscr$ of 
\cite{warga2014optimal}.
As a result Lemma \ref{lemma_space_compact}
is true using the results of chapter~IV. Furthermore, $\Sscr$ is a subset of metrizable space $\N$ and hence is metrizable. \eop 

\begin{table}[h]
\begin{tabular}{|l|l|l|l|l|l|l|l|l|}
\hline
      & Notations of \cite{warga2014optimal} & This paper & Remarks \\ \hline
1    & $R$  & $  U$    &     Compact control space         \\ \hline
2   & $C(R)$ & $C(U)$ &     Space of continuous functions on $U$  \\ \hline
3  & $L^1(T, C(R))$ & $L^1(T, C(U))$  &     Space of $\muL$ integrable functions on $C(U)$  \\ \hline
4         & frm   $(S)$ &     frm   $(U)$ &  Space of Radon measures     on  $U$             \\ \hline
5     &  rpm   $(S) $   &     rpm   $(U) = \P(U)$   &   Space of Radon probability measures     on  $U$             \\ \hline
6       &   $\Sscr $  &   $\Sscr$ &   Space of  relaxed controls             \\ \hline 
7       &   $\R$ &    $\U(T)$  &   Space of  original (or pure) controls             \\ \hline 
\end{tabular}
\vspace{2mm}
\caption{ Notations of this paper and that in \cite{warga2014optimal}}
\label{Table_notations}
\end{table}

\newcommand{\Z}{{\cal Z}}
\newcommand{\yy}{{\bf y}}

\subsection{Relaxed trajectories and proof of Theorem \ref{thm:relaxed_controls_refined}} 
\label{App_relaxed_trajectories}

\textbf{Proof of part (a).}
Consider  the solution of $\bdot{ z} = K_f (1+{z})$, with $z(0) = |x_0|$,  for $K_f$ given in the uniform-boundedness assumption \ref{assum_a1},  which  equals  $z_{x_0}(t) =(1+|x_0|)e^{K_f t} -1 $,  for any $t \in T$, $x_0 \in \I$ (here $| \cdot |$ also represents the Euclidean norm). 
Now  
 observe, using the standard results in ODE literature (see \cite[Chapter 1]{piccinini2012ordinary}), the 
 ODE \eqref{Eqn_relaxed_dynamics_refined} has  \textit{unique} solution  $\x_{\bsig, x_0}$, 
 for any relaxed control $\bsig\in \Sscr$, under  assumption \ref{assum_a1},  which   can be upper bounded:  
 $$
 |x_{\bsig, x_0}(t)| \le z_{x_0}(t), \mbox{ for all   } t\in T, \ x_0 \in \I.
 $$ 
 
Define the following terms,  all of which are finite by assumptions \ref{assum_a0}-\ref{assum_a1} and after domination by $z_{x_0}$ trajectories---here  $T, U$, $\I$  are compact and $f$ is    continuous:
\begin{eqnarray}
 \zeta(t) &:=& \max_{ |x| \le z_{x_0}(t), \  u \in U,   x_0 \in \I }  | f(t, x, u) | \mbox{  for each } t \in T 
\mbox{ and, }  \overline{\zeta} := \max_{t\in T} \zeta(t) .
\label{Eqn_bar_psi}
\end{eqnarray}
Using the above bounds, we have the following:  
\begin{eqnarray}
\label{Eqn_f_upperbounds}
  | f(t, x_{\bsig, x_0}(t), u) | &\le  & \overline{\zeta}, \mbox{ for all } t \in T, \mbox{ $\bsig \in \Sscr$, $u\in U$ and $x_0 \in \I$, } \mbox{ and } \\   
  \max_{t, \bsig, x_0} |x_{\bsig, x_0} (t) |  &\le  & \bar z := \max_{t \in T, x_0 \in \I} |z_{x_0}(t) | , \label{Eqn_upperbounds} \mbox{ for all $\bsig \in \Sscr$ and $x_0 \in \I$.} 
\end{eqnarray}
Now consider the following `projected  set'  of  only relaxed trajectories, see \eqref{Eqn_A_set}:
$$
\T := \{ \x : 
 (\x, \bsig, x_0) \in \A, \mbox{ for some } \bsig \in \Sscr, x_0 \in \I \}  \subset C(T).
 $$
 By \eqref{Eqn_upperbounds}, the above collection is bounded (point-wise also). 
Further, using the bound of \eqref{Eqn_f_upperbounds}, 
for any $\x \in \T$, there exists some $(\bsig,x_0)$  such that $(\x, \bsig, x_0) \in \A$ (in other words, $\x= \x_{\bsig, x_0}$), and then we have the following integral form and boundedness:
\begin{eqnarray*}
  && \left | x (\tau_2) - x(\tau_1) \right  | =  \left |\int_{\tau_1}^{\tau_2} f(s, x(s), \sigma(s)) ds \right | \le{\overline{\zeta}} (\tau_2-\tau_1), \mbox{ for all } 0 \le \tau_1 \le \tau_2 \le t_1, 
\end{eqnarray*}
implying the collection in $\T$ are equicontinuous. 
Thus by  Arzelà--Ascoli theorem (see, \cite{warga2014optimal}), $\T $ is relatively compact   under uniform topology, given by $\|\cdot \|_\infty$. Therefore, \textit{its closure $cl(\T)$ is compact and  sequentially compact}.

\textbf{Proof of $\T$ is closed and compact under uniform topology:}
Let $\{\x_n\} \subset \T$ be a sequence such that 
$\|\x_n - \x\|_\infty \to 0$, or converges uniformly on $T$.
By definition of $\T$ and ODE \eqref{Eqn_relaxed_dynamics_refined}, for each $n$,  there exist $\bsig_n \in \Sscr$ and $x_{0n} \in \I$ such that 
\begin{equation}
x_n(t) = x_{0n} + \int_0^t f(s,x_n(s),\sigma_n(s))\,ds,  \mbox{ for all } t \in T.
\label{Eqn_xn}
\end{equation}

Since $\I$ is compact, there exists a subsequence (for simplicity of notations, relabel with original indices) such that
$x_{0n} \to  x_0 \in \I.$
By Lemma \ref{lemma_space_compact},  $\Sscr$ is sequentially compact under the relaxed-weak topology, thus there exists a further subsequence (again after relabeling) such that
$\bsig_n \rwc  \bsig \in \Sscr$ and 
$x_{0n} \to  x_0 \in \I.$

Taking limits as $n \to \infty$ for the integral term in the right hand side of \eqref{Eqn_xn}, we have the following convergence by the Dominated Convergence Theorem (wrt $\muL$ measure),
\begin{eqnarray}
\label{Eqn_integral_ctns}
\int_0^t f(s,x_n(s),\sigma_n(s))\,ds
\to
\int_0^t f(s,x(s),\sigma(s))\,ds \mbox{ for all } t \in T, 
\end{eqnarray}
because   each  such $[0, t] \subset T $ and as, 
\begin{itemize}
\item  $x_n (s) \to x (s)$, for all $s \in T$, by  uniform convergence  on $T$, given  by hypothesis;
\item $\sigma_n  (s, \cdot) \to \sigma 
(s, \cdot)$ for all $s \in T$, by convergence in relaxed-weak topology,
\item  thus,   we have the following\footnote{ 
To begin with, \eqref{Eqn_integral_ctns}  can be split as below,
\begin{eqnarray*}
\lim_{n \to \infty} \int_{s\in T}\int_U f(s, x_n(s),  u) \sigma_n(s,du) =  \lim_{n \to \infty}\int_{s \in T}\int_U f(s, x(s) ,  u) \sigma_n(s,du) ds \\
&& \hspace{-50mm}+ \lim_{n \to \infty}\int_{s \in T}\int_U [f(s, x_n(s) ,  u)   - f(s, x(s) ,  u) ]\sigma_n(s,du)  ds. 
\end{eqnarray*}
The limit of the second term is zero by dominated convergence theorem and  using   \ref{assum_a1}, as, for each $s \in T$:
\begin{eqnarray*}
\int_U \left  |  f(s, x_n(s) ,  u) - f(s, x(s) ,  u) \right |  \sigma_n (s, du)  \le K_f \int_U \left  |    x_n(s)- x(s) \right | \sigma_n (s, du) \le K_f \left   |    x_n(s)- x(s) \right |   \to 0. 
\end{eqnarray*}
 Also, $(s, \bsig) \to f (s, x(s), \sigma(s))$ belongs to $L^1(T, C(U))$, we have, by relaxed weak convergence of $\bsig_n \to \bsig$,
 $
 \lim_{n \to \infty} \int_{s\in T} \int_U f(s, x(s) ,  u) \sigma_n(s,du)  dt  = \int_{s\in T} \int_U f(s, x(s) ,  u) \sigma(s,du) ds. 
 $
}, for all $s \in T$ and using \ref{assum_a1}; 

$$
f(s, x_n(s), \sigma_n(s) ) - f(s, x(s), \sigma_n(s) )
\le \int_U \left  |  f(s, x_n(s) ,  u) - f(s, x(s) ,  u) \right |  \sigma_n (s, du) \to 0. $$

\item  $|f(s,x_n(s),\sigma_n(s))| \le \overline{\zeta}$, for all $s \in T$ (dominated by integrable, actually a constant function). 
\end{itemize}
Considering limit $n\to \infty$ for both sides of \eqref{Eqn_xn}, we finally have that:  for each $t \in T$, 
$
x(t) =  x_0 + \int_0^t f(s,x(s), \sigma(s))\,ds,
$
which shows that $\x = \x_{\bsig,x_0}$ and thus  $\x \in \T$. 
Therefore     $\T$ is closed  and thus $\T = cl(\T)$  is compact in the uniform  topology.  

\textbf{Proof of $\A$ is Compact.} Observe that
$\A \subsetneq \T \times \Sscr \times \I.$
Since $\T$, $\Sscr$, and $\I$ are compact, their product is compact under the product topology. 
Suffices to prove $\A$ is closed, towards that consider a sequence 
$(\x_n,\bsig_n,x_{0n}) \in \A, \mbox{ such that } (\x_n,\bsig_n,x_{0n}) \to (\x,\bsig,x_0)$. 
Then, using  the same arguments as before, we obtain
\begin{equation}
    x(t) = x_0 + \int_0^t f(s,x(s),\sigma(s))\,ds, \mbox{ for all } t \in T,
    \label{Eqn_x_integral_form}
\end{equation}
and thus that $\x = \x_{\bsig,x_0}$, equals the solution under $(\bsig, x_0)$---hence $(\x, \bsig, x_0) \in \A$. Therefore,  $\A$ is closed  and so is compact. This completes the proof of part (a). 

\textbf{Proof of part (b)---Continuity of the ODE solution.} 
We obtain this proof using the parametric continuity provided by the Maximum theorem (see \cite{berge1877topological}) and we begin with some definitions.  Consider the following  parameterized function   $\Upsilon: \T \times \Sscr \times \I \to [0, \infty)$, defined as below, with $(\bsig, x_0) \in \Sscr\times \I$ as parameters: 
\begin{eqnarray} \hspace{10mm}
  \Upsilon (\x; \bsig, x_0 ) :=  \int_0^{t_1} \left ( x(t) -  x_0 - \int_0^t f(s, x(s), \sigma(s) ) ds \right )^2   dt, \quad  \mbox{ for all }  \x \in \T. 
    \label{Eqn_gamma_max_thm}
\end{eqnarray}
Now recall     $\x_{\bsig, x_0}$ is the unique solution of ODE \eqref{Eqn_relaxed_dynamics_refined}, for any given  $(\bsig, x_0)$---at such an $\x = \x_{\bsig, x_0}$, the integrand in \eqref{Eqn_gamma_max_thm} is zero for all $t \in [0, t_1]$ because of the integral representation of the ODE solution (see \eqref{Eqn_x_integral_form})---thus
the   function in \eqref{Eqn_gamma_max_thm}  possesses a unique minimizer in the form of the relaxed trajectory  $\x_{\bsig, x_0}$  for any $(\bsig, x_0)$---and this is because:
$$
\min_{\x \in \T } \Upsilon (\x; \bsig, x_0 ) = \Upsilon (\x_{\bsig, x_0}; \bsig, x_0) = 0, \ \  \mbox{also observe  by construction, }  \Upsilon (\x; \bsig, x_0) \ge 0. 
$$

Thus we have unique minimizer for  the  function in \eqref{Eqn_gamma_max_thm} and 
$$
a_\Upsilon^*(\bsig, x_0) := \mbox{Arg} \min_{\x \in \T } \Upsilon (\x; \bsig, x_0 )  = \{\x_{\bsig, x_0}\}.
$$
Further   the hypothesis of the Maximum theorem  are satisfied
for the above parametrized optimization problem because of the following steps:
\begin{itemize}
    \item using similar arguments as before (see \eqref{Eqn_integral_ctns}), Dominated convergence theorem twice and the uniform upper bounds of \eqref{Eqn_upperbounds}, one can  prove the joint sequential continuity\footnote{As $\| \x_n -\x \|_\infty \to 0, \bsig_n \rwc \bsig$ and $ x_{0n} \to x_0$, we have  
    $\Upsilon( \x_n; \bsig_n, x_{0n}) \to \Upsilon( \x; \bsig, x_{0})$. } of $\Upsilon$ on $\T \times \Sscr \times \I$;
    \item the domain of optimization  is the same  for any $(\bsig, x_0) \in \Sscr \times \I$ and equals $\T$---the  class $\T$ is proved to be compact under uniform topology in part (a)---thus we have a constant and hence continuous compact correspondence, $(\bsig, x_0) \mapsto \T$. 

    \item further by uniqueness,  the minimizer correspondence $a_\Upsilon^*$ has unique element $\x_{\bsig, x_0}$  for every $(\bsig, x_0) \in \Sscr \times \I $---thus one can view  $a_\Upsilon^*$ as a  single-valued  function (and not correspondence);
\end{itemize}
Therefore by the Maximum theorem, and its corollaries (see \cite{berge1877topological}), we have continuity of the minimizer function $a_\Upsilon^*$ wrt $(\bsig, x_0)$ and the minimizers are exactly the relaxed trajectories. In all,  we have  the continuity of the relaxed trajectories  in the  uniform topology  and so  \eqref{Eqn_thm1_partb} follows.  
\eop

\subsection*{Proof of Theorem \ref{Thm_existence_relx_cont}, Part (a)}
\label{App_thm3}Define $K:= \max \{K_f, K_L \}$ (see \ref{assum_a1}-\ref{assum_a2} for $K_f, K_L$). 
Using exactly the similar arguments as in proof of  \eqref{Eqn_integral_ctns} and using the dominated convergence theorem using \ref{assum_a5}, we have, for  each  $[0, t] \subset T $
\begin{eqnarray}
\label{Eqn_Lr_ctns}
\int_0^t L_{r}(s,x_{\bsig_n, x_{0n}}(s),\sigma_n(s))\,ds
\to
\int_0^t L_r(s,x_{\bsig, x_{0}}(s), \sigma(s))\,ds, \quad  \mbox{ for all } t \in T, 
\end{eqnarray}
this time 
using the  continuity   of $L_r$ given by \ref{assum_a2} and the upper  bound $\bar z$ of \eqref{Eqn_upperbounds}, which in turn is derived using \ref{assum_a1}---observe  $L_r$ can be   bounded as below, further by   \ref{assum_a0}, 
$$
|L_r(s,x_{\bsig_n, x_{0n}} (s),\sigma_n(s))| \le \max_{t \in T, \  |x| \le \bar z,  \  u \in U }  | L_r(t, x, u) | < \infty,   \  \mbox{ for any }
n, s. $$ 
Let $\x_n :=\x_{\bsig_n, x_{0n}}$ and $\x := \x_{\bsig, x_0}$;
similarly  let $\y_n$ and $\y$ represent the  corresponding max-trajectories   $\y_{\bsig_n,x_{0n}}$ and $ \y_{\bsig,x_{0}}  $ defined in  \eqref{Eqn_y_defn}.
By  Theorem~\ref{thm:relaxed_controls_refined}.(b),  for every $\epsilon>0$, there exists an  $N_\epsilon >0$ such that,
$|| \x_n - \x||_{\infty}   < \nicefrac{\epsilon}{K}$  for all $n \ge  N_\epsilon$.

Now, consider any $ t \in  T$.
By compactness of $[0, t]$ and continuity of $L_{mx}$ using \ref{assum_a2},  there exists $s^*_n \in [0,t]$ such that
$
y_n(t)= \sup_{s \in [0,t] }L_{mx}(s, x_n (s)) = L_{mx}(s^*_n, x_n(s_n^*)).
 $
Then using \ref{assum_a2},
\begin{eqnarray*}
y_n(t) - y(t)  &=&   L_{mx}(s^*_n, x_n(s_n^*)) - y(t)  \le   L_{mx}(s^*_n, x_n(s_n^*)) - L_{mx}(s^*_n, x(s^*_n))  \\
     &\le& K  | x_n(s^*_n) - x(s^*_n) | \ 
    \le \  K      \|   \x_n -\x \|_\infty < \epsilon,  \mbox{ for all } n > N_\epsilon. 
\end{eqnarray*}
Since $t \in T$ is arbitrary, we have
$\| \y_n - \y\|_\infty <  \epsilon,  \mbox{ for all } n > N_\epsilon.$
Again,  as $\epsilon > 0$ is arbitrary, this completes the proof of part (a)---also see  \eqref{Eqn_Lr_ctns}.

{\bf Part (b):}
By part (a),  the objective function in \eqref{Eqn_combined_problem_relaxed} is  continuous  wrt  $(\x, \bsig, x_0) \in \A$  and then   part (b) follows by  the compactness of $\A$ from Theorem \ref{thm:relaxed_controls_refined}.(a) and the Weierstrass theorem.  \eop

\subsection*{Proof of Lemma \ref{Lem_exist_optimal_control}}
\label{app_lemma_exitence}
\textbf{Part (i):} We show that the RHS of the  $\y_\bsig^\delta$-ODE in \eqref{Eqn_final_problem_compact_relaxed}  satisfies the Lipschitz continuity and uniform boundedness conditions of \ref{assum_a1}.
 
 {\bf Lipschitz continuity:}
Let the RHS of the $\y_\bsig^\delta$-ODE in \eqref{Eqn_final_problem_compact_relaxed} be represented compactly by, 
$
F(s,x,y,u) := \big ( \Delta_{mx} (s, x, u)  \big )^+ \psi_\delta(L_{mx} (s,x)-y).
$
Further,
we have already shown, while proving Theorem \ref{thm:relaxed_controls_refined},   that $|x(t)| \le \bar z$  of \eqref{Eqn_upperbounds}, under \ref{assum_a1} for all $\bsig \in \Sscr$, $t \in T$ and $x_0 \in \I$. Also observe $\x_\bsig$ ODE is decoupled, while only $\y_\bsig^\delta$ solution is coupled to that of $\x_\bsig$. 
\textit{Thus,  
one can replace 
$F(s, x, y, u) = F(s, x_z, y, u)$ without  altering the problem, where the $i$-th component of $x_z$  is given by,
$$(x_z)_i := \max \{ -\bar{z}, \   \min \{ \bar z, x_i\} \}; \mbox{ also note here, } |x_z| \le \bar z \mbox{ and }  |x_z-x_z'| \le |x-x'|.
$$}
Further  by definition, 
$|\psi_\delta(d) - \psi_\delta(d')| \le  \delta^{-1} |d-d'|$ and $\psi_\delta(d) \in [0,1]$, for all $d$. 
Now consider any  $x,x' \in \mathbb{R}^n$ and $ y,y' \in \mathbb{R}$. Then, using $\bar{z}$   of   \eqref{Eqn_upperbounds}  and assumptions \ref{assum_a1}-\ref{assum_a3},  $F$ is Lipschitz jointly continuous in $(x,y)$, because: 
\begin{eqnarray*}
  && \hspace{-2mm}   |F(s,x_z,y, u) - F(s, x'_z,y', u)| \\
&& \le \Big| \left(\Delta_{mx}(s,x_z, u)\right)^+   - \left(\Delta_{mx}(s,x'_z,u ) \right)^+ \Big| \psi_\delta\big (L_{mx}(s,x_z)-y \big ) \\
&& \quad + \Big| \psi_\delta(L_{mx}(s,x_z)-y)  - \psi_\delta(L_{mx}(s,x_z')-y') \Big|  \left(\Delta_{mx}(s,x'_z,u)\right)^+ \\
&& \le \Big| \Delta_{mx}(s,x_z,u)   - \Delta_{mx}(s,x_z',u)  \Big| \psi_\delta(L_{mx}(s,x_z)-y)    \qquad  (\mbox{as } |a^+ - b^+| \le |a-b|) \\
&& \quad + \left (\Big|  L_{mx}(s,x_z)   -  L_{mx}(s,x'_z) \Big| + \Big|y - y'\Big| \right ) \delta^{-1}K_\Delta(1 + \bar{z}) \\
&& \le K_\Delta |x_z-x'_z| +  \left( K_L|x_z-x'_z| + |y-y'|  \right) \delta^{-1} K_\Delta(1 + \bar{z}) \ \ \le \ \  K_F \max \{ |x -x'|,  |y-y'| \}, 
\end{eqnarray*}
where $K_F:= K_\Delta \max\{  1 +  K_L\delta^{-1}(1 +\bar{z}) , \delta^{-1} (1 + \bar{z})\}$ is the Lipschitz constant, and we used the  max-norm  for the ordered pair $(x,y)$.

\textbf{Uniform boundedness:}  
Since 
  $\psi_\delta \le 1$, we have, 
$|F(s,x_z,y, u)| \le |\Delta_{mx}(s,x_z, u)|.$
Again using $\bar{z}$  of \eqref{Eqn_upperbounds}   and \ref{assum_a3}, we have 
$|\Delta_{mx}(s,x_z, u)| \le K_\Delta (1 +\bar{z})$. Therefore, $|F(s,x_z,y)| \le K_\Delta (1 +\bar{z})$.

Thus, the joint trajectories of ODEs 
$(\x_\bsig, \y_\bsig^\delta)$ satisfy both Lipschitz continuity and linear growth conditions of \ref{assum_a1}, after replacing $x$ with $(x, y)$ and  $f$ with $(f, F)$. Hence the result follows by  Theorem \ref{thm:relaxed_controls_refined}.(b). 

\textbf{Part (ii):} Using the    results of \cite{roxin1962existence},  we  first prove the existence of solution for, 
\begin{eqnarray}
    \label{Eqn_smooth_pure}
\sup_{\u \in \U} W(\x_\u,  \y_\u^\delta; \u)\quad \text{subject to }  \eqref{Eqn_relaxed_dynamics_refined} \mbox{ and } \eqref{eq:ydelta}, \mbox{ with } y_\bsig^\delta(0) = x_\bsig(0)=x_0, \end{eqnarray}
 obtained  by 
 replacing the domain $\Sscr$ with smaller  $\U$
 in \eqref{Eqn_final_problem_compact_relaxed}. Towards this, we first convert the above  into a Mayer-type problem (with only terminal cost)---applying  a standard technique, we augment the state with a new component  
$z_{\u}(t) := \int_0^{t}  L_r(s, x_{\u}(s), u(s)) ds,$  for all  $t \in [0, t_1]$
and equivalently solve $ \sup_{\u \in  \U} \left(z_{\u}(t_1) -   y^\delta_\u(t_1)  - \Psi(x(t_1))\right)$. All the  assumptions from \cite[Assumptions (i)-(vii)]{roxin1962existence} are satisfied in our setup as  below:
\\
$\bullet$ Assumption (i) follows from \ref{assum_a0} and (v), (vi) are satisfied  by definition of $\U$ in \eqref{Eqn_Controls_space}. 
\\
$\bullet$
     Assumption (ii) is satisfied because, over a finite horizon, and under assumptions \ref{assum_a1}-\ref{assum_a2},  functions $f$ and $L_r$ are continuous and hence integrable in $t$ for all $(x, u) \in \mathbb{R}^n \times U$. From assumption \ref{assum_a3}, the right-hand side of equation \eqref{eq:ydelta} is well-defined and RHS of ODE \eqref{eq:ydelta} also satisfy assumption (ii) of \cite{roxin1962existence}.\\
$\bullet$ 
      The remaining assumptions (iii), (iv) and (vii) are directly satisfied by assumptions \ref{assum_a1} and \ref{assum_a3}-\ref{assum_a5}. 
      Also observe that, for each $(s,x,y)\in T\times\mathbb{R}^n\times\mathbb{R}$, the set 
\[
\begin{aligned}
\left\{
F(s,x,y,u):u\in U
\right\}& =
\begin{cases}
\{0\},&
\Delta_{mx}(s,x,u)\leq 0,\\
\left\{
\Delta_{mx}(s,x,u) 
\psi_\delta\big(L_{mx}(s,x)-y\big):u\in U
\right\},&
\Delta_{mx}(s,x,u)>0.
\end{cases}
\end{aligned}
\]
 is convex because of following reasons:  $\Delta_{mx}(s,x,u)$ defined in \eqref{Eqn_Delta_mx} is an affine transformation of the convex set $f(s,x,U)$ and its intersection with the positive half-line and  is therefore convex; moreover, the condition $\psi_\delta(L_{mx}(s,x)-y)\geq 0$ is independent of $u$.

Thus by \cite{roxin1962existence}, there  exists a  $\u^*_\delta \in \U$  that is optimal for \eqref{Eqn_smooth_pure}. The denseness of Lemma~\ref{lemma_space_compact} and the  continuity of $\bsig \mapsto W(\x_\bsig, \y_\bsig^\delta; \bsig)$,  provided by  Theorem \ref{Thm_existence_relx_cont} and part (i),  implies the following:    for every $\epsilon > 0$, there exists a   $\u_\epsilon \in \U$ such that,
$ W(\x_{\u_\epsilon}, \y_{\u_\epsilon}^\delta;\u_\epsilon) \geq \vr^{\delta} -\epsilon$, for  $\vr^\delta$ of \eqref{Eqn_final_problem_compact_relaxed}.
Further since $\U \subset \Sscr$, we have: 
$$
\vr^{\delta}  \geq W(\x_{\u^*_\delta}, \y_{\u^*_\delta}^\delta;\u^*_\delta) 
\geq W(\x_{\u_\epsilon}, \y_{\u_\epsilon}^\delta;\u_\epsilon)
\geq \vr^{\delta} - \epsilon. 
$$
By letting  $\epsilon \to 0$, we have   $W(\x_{\u^*_\delta}, \y_{\u^*_\delta}^\delta;\u^*_\delta) = \vr^{\delta}$, and so, $\u^*_\delta$ is also optimal for~\eqref{Eqn_final_problem_compact_relaxed}.
\eop

\subsection*{Proof of Theorem \ref{Thm_ydel_convg}}
\label{app_thm_y_ctny}

 \textbf{Part (ii):} The result is derived using
 mathematical induction on $i$: with  $\II_\ell := (\Up_\ell, \Dw_\ell) \cup [\Dw_\ell, \Up_{\ell+1}]$ for each $\ell$,  we prove \eqref{Eqn_Sw}     for all $t \in   \II_i$,  assuming the  same for  all previous intervals, i.e., for all $t \in \cup_{\ell < i} \II_\ell$. 
 
Further we will see that the initial step (or the base case) of the induction proof follows exactly as any  other inductive step;
these details are evident in the proof given below. 
 
{\bf Step 1:} Consider   $(\Up_i, \Dw_i)$  for some  $i$ and assume  \eqref{Eqn_Sw} is satisfied for all $t \le \Up_i$, which implies the following at $t = \Up_i$:
\begin{equation}
y^0(\Up_i) \in [y^\delta(\Up_i) - \delta, y^\delta(\Up_i)] \mbox{ and  further by definition of $\Up_i$, } L_{mx}(\Up_i)  \ge y^\delta(\Up_i) - \delta.
\label{Eqn_y0_at_ui}
\end{equation}
Observe the above conditions are satisfied even for the first interval,   when $\Up_1 = 0$, as the initial conditions  are  $y^0(0) = y^\delta(0) = L_{mx}(0)$ and the proof of the induction step remains the same even for such  an initial step. Hence we provide one proof as below.

To begin with, from \eqref{Eqn_definition_ui_di}, for all $ t \in (\Up_i, \Dw_i)$, we have 
  $ L_{mx}(t) >  y^\delta (t) - \delta$ and  $\bdot{L}_{mx}(t) >0$. Thus by definition of $y^0(t)$ in \eqref{Eqn_y_defn},
  \begin{eqnarray}
  \label{Eqn_one_dir}
      y^0(t) \ge L_{mx}(t) >  y^\delta (t) - \delta \mbox{ for all } t \in (\Up_i, \Dw_i). 
  \end{eqnarray}
Firstly, say  $y^0$ remains constant:    $y^0(t) = y^0 (\Up_i)$,   for all  $ t \in (\Up_i, \Dw_i)$.
Then  by non-decreasing nature of $y^\delta$ and \eqref{Eqn_y0_at_ui}, we have $y^\delta (t) \ge y^\delta (\Up_i) \ge y^0(\Up_i) = y^0(t)$, for all $t \in (\Up_i, \Dw_i)$.  Hence \eqref{Eqn_Sw} is satisfied  for all $t <  \Dw_i$, proving the inductive step for this sub-case, see also \eqref{Eqn_one_dir}. 
  
Now  consider the next sub case when $y^0 (\tau) > y^0(\Up_i)$ for some $\tau  \in  (\Up_i, \Dw_i)$.
As  $\bdot{L}_{mx}(t) >0$  in $(\Up_i, \Dw_i)$ interval,    $L_{mx} (t)$ is strictly increasing and thus by definition of $y^0(t)$ in \eqref{Eqn_y_defn}, 
  there exists an  $ s \in  [\Up_i, \Dw_i)$ such that (one can have $(\Up_i, s)$ empty if $s=\Up_i$),
\begin{eqnarray}
\label{Eqn_y0_in_ui_to_di}
 \hspace{10mm}   y^0(\Up_i) =  y^0(t) > L_{mx}(t) \mbox{ for  } t \in (\Up_i, s) \mbox{ and } \ 
    y^0(t) =  L_{mx}(t),  \mbox{ for all } t \in  [s, \Dw_i).
\end{eqnarray}

From the definitions in \eqref{Eqn_definition_ui_di},
   the derivative  $\bdot{y}^\delta (t) > 0$,  for all $t \in (\Up_i, \Dw_i)$; further by \eqref{eq:ydelta}, we have  $0< \bdot{y}^\delta (t) \le \bdot{L}_{mx}(t)$. By virtue of the smaller derivative for~$y^\delta$, we only have  the following two possibilities in this interval and the sub-case of \eqref{Eqn_y0_in_ui_to_di}. 

\begin{enumerate}[(a)]
    \item  The trajectory $y^\delta$ remain bigger, i.e.,  $y^\delta(t) > L_{mx}(t)$ for all $t \in (\Up_i, \Dw_i)$.   
    Since $y^\delta$ is non-decreasing,   we have 
    $$y^0(t) = y^0(\Up_i) \le y^\delta (\Up_i) \le y^\delta (t) \mbox{ for all }t \in (\Up_i, s),  \mbox{ for } s   \mbox{ of }\eqref{Eqn_y0_in_ui_to_di}, $$  and for  $t \in [s, \Dw_i)$, we again have 
    $y^\delta (t) > L_{mx}(t) = y^0(t)$. In all, \eqref{Eqn_Sw} is again satisfied in  $(\Up_i, \Dw_i)$ for this condition also, see \eqref{Eqn_one_dir}. 
\item  There exists an $s' \in (\Up_i, \Dw_{i})$ such that   $y^\delta(t) > L_{mx}(t)$ for $t \in (\Up_i, s')$ and $y^\delta(t) = L_{mx}(t)$  for $t\in [s', \Dw_i)$. 

Now for $t < s'$, \eqref{Eqn_Sw}  is satisfied as in previous $(a)$ case; 
by continuity $y^\delta(s') \ge y^0(s')$,  and at $s'$,  we have 
$L_{mx}(s') \le y^0(s') \le y^\delta(s') = L_{mx}(s')$, implying $L_{mx}(s') = y^\delta(s') = y^0(s')$;  finally for $t \in  (s' , \Dw_i)$,  \eqref{Eqn_Sw} is satisfied with
$y^\delta (t) = L_{mx}(t) = y^0(t)$  (again using \eqref{Eqn_one_dir} and note here that, we will have  $s \le s'$).

\end{enumerate}
In summary,  \eqref{Eqn_Sw} is satisfied for all $t \le \Dw_i$,  by continuity and further using the induction assumption.

{\bf {Step 2:}} Now consider  $(\Dw_i, \Up_{i+1})$  for some  $i$ and assume  \eqref{Eqn_Sw} is satisfied for all $t \le \Dw_i$, which implies: 
$$
y^\delta (\Dw_i)  \in  [y^0(\Dw_i),  \  y^0(\Dw_i) +\delta] \mbox{ and by definition \eqref{Eqn_definition_ui_di}, } y^\delta (\Dw_i) -\delta \le  L_{mx}(\Dw_i).
$$
Again the initial step also satisfies the above because of the  initial conditions $y^0(0) = y^\delta(0) = L_{mx}(0)$, even when  $\Up_1 > 0 = \Dw_0$. 

For any $t \in [\Dw_{i}, \Up_{i+1}]$, by definition $\bdot{y}^\delta = 0$ and 
\textit{thus $y^\delta (t) = y^\delta (\Dw_{i}^\delta)$, for all such $t$.}  

 We further claim  $L_{mx} (t) \le  y^\delta (t) - \delta$ for  $t \in (\Dw_{i}, \Up_{i+1})$. For the purpose of contradiction, assume at some $\tau \in ( \Dw_i, \Up_{i+1})$, we have $L_{mx} (t) >  y^\delta (t) - \delta$. Now define $s := \sup \{ t < \tau:  L_{mx} (t) = y^\delta (t) - \delta\}$. Clearly by continuity $s < \tau$ and   $ L_{mx} (t) >  y^\delta (t) - \delta$  for all $t \in (s, \tau]$. But  by mean value theorem implies the existence of a $s' \in (s, \tau)$   where $\bdot{L}_{mx} (s') > 0$ (as $y^\delta $ is constant here), which contracts the definition of $\Up_{i+1}$, as $s' <\tau < \Up_{i+1}$. 
 And hence       $L_{mx} (t) \le  y^\delta (t) - \delta$  and  $y^\delta (t) = y^\delta (\Dw_i)$ for all $t \in [\Dw_{i}, \Up_{i+1}]$, implying
\begin{eqnarray*}
y^\delta (t ) - \delta =  y^\delta (\Dw_i ) - \delta \le  y^0 (\Dw_{i})  \le    y^0(t)  &= & \max \left  \{ y^0 (\Dw_i), \max_{s \in [\Dw_i, t]} L_{mx} (s) \right  \}  \\
 &\le& \max \left  \{ y^\delta (\Dw_i), \max_{s \in [\Dw_i, t]} y^\delta  (s) -\delta \right  \} =  y^\delta (\Dw_i)  =  y^\delta (t).
\end{eqnarray*}
%
\ignore {
this inequality is strictly negated, this means

Further, $L_{mx} (t) < y^\delta (t) - \delta$ for $t \in (\Dw_{i}, \Up_{i+1})$, by definition of $\Up_i$ in \eqref{Eqn_definition_ui_di}. Therefore,  
 \begin{eqnarray*}
     y^0(t) \ge y^0 (\Dw_i) \ge y^\delta (\Dw_i ) - \delta = y^\delta (t)-\delta  > L_{mx}(t), \mbox{ for all such } t   
 \end{eqnarray*}
 implying $\y^0$ trajectory never meets $\x$ trajectory or that $y^0(t) = y^0 (\Dw_i)$ for $t \in (\Dw_{i}, \Up_{i+1})$.

 {\color{red} We have $y^\delta (\Dw_i) -\delta \le  L_{mx}(\Dw_i)$. For any $t \in [\Dw_{i}, \Up_{i+1}]$, by definition $\bdot{y}^\delta = 0$ and 
thus $y^\delta (t) = y^\delta (\Dw_{i}^\delta)$, for all  $t$.  Thus, for all $t \in [\Dw_{i}, \Up_{i+1}]$
\begin{eqnarray*}
     y^0(t) \ge y^0 (\Dw_i) \ge  L_{mx}(\Dw_i)\ge y^\delta (\Dw_i ) - \delta  = y^\delta (t ) - \delta. 
 \end{eqnarray*}
This implies that $ y^0(t) \ge y^\delta (t ) - \delta$, for all $t \in [\Dw_{i}, \Up_{i+1}]$. 

Now, also observe that $y^\delta$ is constant and $y^\delta (\Dw_{i}^\delta) \ge  y^0(\Dw_{i}^\delta)$ with $y^0$ is non-decreasing function. Also, 
for all $t \in [\Dw_{i}, \Up_{i+1}]$,  we have $x(t) < y^0(t)$. This implies $y^0(t) \le y^\delta(t)$. }

}%
 Hence $y^0 (t) \in [y^\delta (t)-\delta, y^\delta ]$ and   the comparison status at $\Dw_i$ continues for all $t \in [\Dw_i, \Up_{i+1}]$,   and so \eqref{Eqn_Sw} is satisfied for all $t \le \Up_{i+1}$, after using the induction assumption. 
\eop

\Remove{
\section*{Proof of Lemma \ref{Lem_value_fun_ineq}}
\label{App_lem_dp}
From \eqref{Eqn_value_fun}, for any given $\delta>0$, it is possible to   choose a control $\u^1 \in \mathcal{U}([r, t_1])$ such that
\begin{equation}
    \int_r^{t_1} L\left(s, x^1(s), u^1(s)\right) d s - \vee_{r, z_\u}^{t_1} (\x^1) +\Psi\left( x^1\left(T\right)\right) \geq v(r, x(r), z_\u)+\delta,
    \label{Eqn_value_supremum_property}
\end{equation}
where  $\x^1$ is the state trajectory for control $\u^1$ and 
$x^1(s)$  is the state  at time $s$ with initial condition ($r, x(r), z_\u$). Such a control $\u^1$ is called $\delta$-optimal for \eqref{Eqn_value_fun}. Define an admissible control $\tilde{\u} \in \mathcal{U}(\tau, t_1)$ by
$\tilde{u}(s) = u(s) \mathds{1}_{\{s \leq r \}} + u^1(s)\mathds{1}_{\{s >r \}},  \mbox{ for all } s \in [\tau, t_1]$. 
Let $\tilde{\x}$  be the state trajectory corresponding to $\tilde{\u}$ with initial condition $(\tau, x)$. We have
$$
\begin{aligned}
v(\tau, x,y) & \geq J(\tau, x,y ; \tilde{\u})  =\int_\tau^{t_1} L_r(s, \tilde{x}(s), \tilde{u}(s)) d s - \vee_{\tau, y}^{t_1} (\tilde{\x})
+\Psi( \tilde{x}(t_1)) \\
 & =\int_\tau^r L_r(s, x(s), u(s)) d s+\int_r^{t_1} L\left(s, x^1(s), u^1(s)\right) d s  +\Psi\left( x^1\left(t_1\right)\right) \\
 & \hspace{10mm}-   \max \left\{ y, \sup_{t \in [\tau, r]} L_{mx}(t, x(t)), \sup_{t \in [r, t_1]} L_{mx}(t, x^1(t)) \right\} \\
 & {=}\int_\tau^r L_r(s, x(s), u(s)) d s+\int_r^{t_1} L\left(s, x^1(s), u^1(s)\right) d s +\Psi\left( x^1\left(t_1\right)\right) \\
 & \hspace{10mm} - \max \left\{ z_\u, \sup_{t \in [r, t_1]} L_{mx}(t, x^1(t)) \right\} \quad \left(\because \max \{ y, \sup_{t \in [\tau, r]} L_{mx}(t, x(t))\} = z_\u\right)\\ 
 & {=}\int_\tau^r L_r(s, x(s), u(s)) d s+\int_r^{t_1} L\left(s, x^1(s), u^1(s)\right) d s - \vee_{r, z_\u}^{t_1} (\x^1) +\Psi\left( x^1\left(t_1\right)\right) \\
& {\geq} \int_\tau^r L_r(s, x(s), u(s)) d s+ v(r, x(r), z_\u)+\delta   \qquad \ (\mbox{using \eqref{Eqn_value_supremum_property}}).
\end{aligned}
$$
As $\delta>0$ is arbitrary,  the  inequality \eqref{Eqn_value_ineq1} is satisfied. 
\eop}

\ignore{
Part (c) is established using  Weierstrass theorem, however 
one requires more arguments  for the non-standard problem in \eqref{Eqn_combined_problem_relaxed}---mainly to establish continuity of max-trajectory $\y_\bsig = (y_\bsig (t), t \in T)$,  defined point-wise  as below:
\begin{eqnarray}
    y_\bsig(t) :=  \sup_{s \in [0, t] }  L_{mx} (s, x_\bsig (s) )  , \mbox{ for all } t \in T,\label{Eqn_y_defn}
\end{eqnarray}
with respect to $\sigma$. 
This part of the proof is    also provided  in  Appendix \ref{App_relaxed_trajectories}.

The major steps are:
\begin{itemize}
    \item We will first show the conditions of \cite[Theorem VI.1.1]{warga2014optimal} 
    are applicable to  establish parts (a)-(b). 

    \item We next  show that the max trajectory $\y_\bsig$  of \eqref{Eqn_y_defn}  and hence the objective function in \eqref{Eqn_combined_problem} are continuous wrt  $(\x, \bsig, x_0) \in \A$. Then the proof 
    follows by Weierstrass theorem. 
\end{itemize}

Towards applying \cite[Theorem VI.1.1]{warga2014optimal}, we first show that the  problem in \eqref{Eqn_combined_problem} can be typecast as  the problem formulated as in \cite[Chapter VI.0, pg 347]{warga2014optimal}, with following details:

\begin{itemize}

\item We have no constraints on the relaxed trajectories,  thus we set  $g_2 = h_2 \equiv 0$ and $C_2 = \{ w: w (t) =   \mathbb{R}^n \mbox{ for all } t \in T \}$ in the problem formulation of \cite[Section VI.0]{warga2014optimal}.

\item We don't have  control parameters in our setup---observe $f$ in \eqref{Eqn_combined_problem} depends only upon $(t, x, u)$ and  is independent of $b$; also  $g_1$ is meant to impose conditions on terminal time $t_1$, we hence   set $h_1 \equiv 0$ and $B_1 = {\mathbb R}$, see also \cite[Chapter III, pp 241]{warga2014optimal}. 

\item Further to obtain continuity also  wrt initial condition $x_0$, we  add  $b:=\nicefrac{x_0}{t_1}$ to $f$ in \eqref{Eqn_combined_problem}  to obtain the function $f$  considered in 
 \cite[Theorem VI.1.1]{warga2014optimal}, we also set $v_0 = 0$, $t_0=0$ in the theorem hypothesis. Observe then the solution of integral equation in page 347 of \cite[Section VI.0]{warga2014optimal} equals:
 \begin{eqnarray*}
     y = F(y, \bsig, b) = 0 + \int_0^{t} \left ( f(\tau, y(\tau), \sigma(\tau) ) + b  \right ) d\tau = \frac{x_0}{t_1} t_1 + \int_0^{t_1} \left ( f(\tau, y(\tau), \sigma(\tau) )    \right ) d\tau
 \end{eqnarray*}

\item Consider  the solution of $\bdot{z} = K (1+z)$ for $K$ given in assumptions \ref{assum_a0}-\ref{assum_a1},  which  equals  $z(t; x_0) =(1+x_0)e^{Kt} -1 $,  for any $t \in T$ and define,
$$\psi (t) := \max_{ z \le z(t; x_0), u \in U, x_0 \in \I }  | f(t, z, u) | \mbox{  for each } t \in T.
$$
Now  
 observe, using standard results in ODEs and assumption \ref{assum_a1},  that for any solution $\x_\bsig$ of ODE \eqref{Eqn_relaxed_dynamics_refined} under any relaxed control $\sigma \in \Sscr$ we have $x_\bsig(t) \le z(t)$  and thus  
\begin{eqnarray*}
    | f(t, x_\bsig (t; x_0), u) | \le \psi (t) \mbox{ for all } t \in T \mbox{ and } \\  \sup_{t, \bsig, x_0} |x_\bsig (t; x_0) | \le \bar z := \sup_{t \in T, x_0 \in \I} |z(t; x_0) |. 
\end{eqnarray*}

is defined as $f + x_0$
 
Finally  with  $v_0=0,  t_0=0,   R,  \psi $ and $  V_0 $ of 
   \cite[Theorem VI.1.1]{warga2014optimal} are given respectively by   $0$,   $f$ in \eqref{Eqn_relaxed_dynamics_refined}, $U$, $\psi (t) = (1+x_0)e^{Kt} -1$ and  $\{ x \in \mathbb{R}^n : |x| \le \bar z\}$  and satisfy the required hypothesis with $V  = \mathbb{R}^n $, because of the above mentioned steps and assumptions \ref{assum_a0}-\ref{assum_a2}.

\end{itemize}

}

\ignore{
\newpage 
We start with the initial condition $y^0(0) = y^\delta(0) = x(0)$. Also, $\Dw_0^\delta = 0$. 

\begin{enumerate}
    \item 
We begin with interval $[\Dw_{0}^\delta, \Up_1^\delta]$. 
Now, there are two cases, 
\begin{enumerate}
    \item Now, we begin with the first case, i.e., $\bdot{x}(0) <0$. Then, by definition, for $t \in [\Dw_{0}^\delta, \Up_1^\delta], $ and hence $\bdot{y}^\delta (t) = 0$, i.e., $y^\delta(t) = y^\delta(0)$. Also, observe that $x(\Up_1^\delta) = y^\delta(\Up_1^\delta)- \delta$ and $x(t) < y^\delta(t)- \delta$, for all $t  \in [0, \Up_1^\delta)$. As $y^0(t) = \sup_{s \in T } \{ x(s)\}$, we have $y^0(t) = y^0(0)$, for all $t \in [\Dw_{0}^\delta, \Up_1^\delta]$. 
In summary, $y^\delta(0) = y^\delta(t) = y^0(t)$, for all $t \in [\Dw_{0}^\delta, \Up_1^\delta]$  and $x(t) < y^\delta(t)- \delta$, for all $t  \in [0, \Up_1^\delta)$, with $x(\Up_1^\delta) = y^\delta(\Up_1^\delta)- \delta$. 
 \item Now, $\bdot{x}(0) \ge0$.  By definition, here  $\Up_1^\delta = \Dw_{0}^\delta =0$. Thus, trivially, $$y^\delta(\Up_1^\delta) = y^0(\Up_1^\delta) = x(\Up_1^\delta)= y^0(0). $$ 
\end{enumerate}
In summary, for any $t \in [\Dw_{0}^\delta, \Up_1^\delta]$, $y^\delta(t) \le  x(t) - \delta$ and $y^\delta (t) \in [y^0(t), y^0(t) + \delta]$. Thus, $y $ and $y^\delta$ trajectories  remain constant within this interval.

\item Now, we consider the next interval $[ \Dw_{1}^\delta, \Up_2^\delta]$. Here $\bdot{y}^\delta =0$. Thus, $y^\delta (t) = y^\delta (\Dw_{1}^\delta)$, for all $t \in [ \Dw_{1}^\delta, \Up_2^\delta]$. Also, observe a) $y^\delta (\Dw_{1}^\delta) - y^0(\Dw_{1}^\delta) \le \delta$, b) $y^\delta (\Up_{2}^\delta)-\delta = x(\Up_{2}^\delta) $ and, c) $y^0$ is non decreasing function. Thus, $y^\delta(t) = y^0(t) \ge y^\delta(t) - \delta $ and $y^\delta(\Up_{2}^\delta) = y^0(\Up_{2}^\delta) \ge  x(\Up_{2}^\delta)$. 

In summary, $y^\delta(t) \in [y^0(t), y^0(t) + \delta]$.

\item Now, repeat the above steps (2) and (3), untill we hit time horizon $T$.  Thus, for all $t \in T$, $y^\delta(t) \in [y^0(t), y^0(t) + \delta]$.
\end{enumerate}

\newpage
Define $y_i^\delta := y^\delta (\Dw_{i-1})$  for all $\delta \ge 0.$ 
Say $\eta := y_i^\delta - y_i^0$  for some $i$, where $\eta \ge  0$, recall $x_i \le y_i^0$.  
\textbf{The first step is to prove that:}
$$
y_{i+1}^\delta - y_{i+1}^0 \le 2 \delta. 
$$
First we consider $(\Dw_{i-1}, \Up_i]$. We have $\bdot{y}^\delta (t) = 0$, for all $ t \in (\Dw_{i-1}, \Up_i]$, which implies that $y^\delta (t)= y^\delta (\Dw_{i-1})$, for all $ t \in (\Dw_{i-1}, \Up_i]$. 
Also, $y^\delta (\Up_i) -  \delta = x(\Up_i)$. 

Now, we consider three cases, 
\begin{enumerate}
    \item $\tilde{\Dw}_i^\delta < \Up_i$, 
    \item $\tilde{\Dw}_i^\delta = \Up_i$, 
    \item $\tilde{\Dw}_i^\delta > \Up_i$. 
\end{enumerate}

\textbf{Case 1. $\tilde{\Dw}_i^\delta < \Up_i$:} This implies that $x(t) < y^\delta (t) - \delta$ and $x(t) \le y^0(t)$, for all $t \in (d^\delta_{i-1}, \tilde{d}^\delta_{i})$. As $y^0(\cdot)$ is non-decreasing function, thus, $y^0(d^\delta_{i-1}) \le y^0(t) \le y^0(\tilde{d}^\delta_{i})$, for all $t \in (d^\delta_{i-1}, \tilde{d}^\delta_{i}]$. 
Observe that, we have $y^\delta (\Dw_{i-1}) - y^0 (\Dw_{i-1}) = \eta$. Above argument implies that $y^\delta (t) - y^0 (t) \le  \eta$, for all $t \in [d^\delta_{i-1}, \tilde{d}^\delta_{i}]$.

Now, we consider $(\tilde{\Dw}^\delta_{i}, \Up_i)$. Here, $y^0(t)= x(t)$ and $y^\delta (t) = y^\delta(\tilde{\Dw}^\delta_{i})$, for all $t \in (\tilde{\Dw}^\delta_{i}, \Up_i)$. 
By definition, $y^\delta (\Up_i) -  \delta = x(\Up_i) = y^0(\Up_i)$. Now,

\newpage 
And $y^0 (\Dw_{i-1}) = y^0 (\Up_i) =  x(\Up_i)  -  \delta$. 
Thus, we have 
\begin{equation}
y^\delta (\Up_i) - \delta = y^0(\Up_i) = x(\Up_i) - \delta. 
\label{Eqn_y_delta_at_Di}
\end{equation}

Now, we consider $(\Up_i, \Dw_{i}]$,  we have $\bdot{y}^\delta (t) > 0$, for all $ t \in (\Up_i, \Dw_{i}]$. 
\begin{enumerate}
    \item Consider $(\Up_i, \tilde{\Up}_{i}^\delta]$
    \begin{eqnarray*}
    y^\delta(\tilde{\Up}_{i}^\delta) - y^0(\tilde{\Up}_{i}^\delta) &=& y^\delta(\tilde{\Up}_{i}^\delta) - y^0({\Up}_{i}^\delta)  \\
    &=& y^\delta(\tilde{\Up}_{i}^\delta) - (y^\delta({\Up}_{i}^\delta)- \delta) \\
    &=& \int_{{\Up}_{i}^\delta}^{\tilde{\Up}_{i}^\delta} \psi_\delta (x(s)- y^\delta (s)) (\bdot{x}(s))^+ ds +\delta\\
    &=& \int_{{\Up}_{i}^\delta}^{\tilde{\Up}_{i}^\delta}  \bdot{x}(s) ds  +\delta \\
    &=& x(\tilde{\Up}_{i}^\delta) - x({\Up}_{i}^\delta)  +\delta \quad (\mbox{by definition of } \tilde{D}, x(\tilde{\Up}_{i}^\delta)= y^0(\tilde{\Up}_{i}^\delta)) \\
    &=& \left(y^{\delta} ({\Up}_{i}^\delta) - \delta\right) - \left(y^{\delta} ({\Up}_{i}^\delta)- 2\delta \right)  +\delta = 2\delta \quad (\mbox{see } \eqref{Eqn_y_delta_at_Di}). 
\end{eqnarray*}
\item Now, we consider $(\tilde{\Up}_{i}^\delta, {\Dw}_{i}^\delta]$ (if it exists, which is possible when $\tilde{\Up}_{i}^\delta <{\Dw}_{i}^\delta$). Here, $\bdot{y}^\delta (t) = \bdot{y}^0 (t) = \bdot{x} (t)$, for all $t \in (\tilde{\Up}_{i}^\delta, {\Dw}_{i}^\delta]$. Also, we know that 
$$
y^\delta(\tilde{\Up}_{i}^\delta) - 2 \delta = y^0(\tilde{\Up}_{i}^\delta) = x(\tilde{\Up}_{i}^\delta). 
$$
Thus, we have 
$$
y^\delta(t) - y^0(t) = y^\delta(\tilde{\Up}_{i}^\delta) - y^0(\tilde{\Up}_{i}^\delta) = 2\delta, \ \forall t \in (\tilde{\Up}_{i}^\delta, {\Dw}_{i}^\delta]. 
$$
\end{enumerate}

\newpage 
We first  prove  
\begin{equation}
    y^\delta (s) \le y^0 (s) + g(\delta), \mbox{ for all } s
    \label{Eqn_ydel_le_y}
\end{equation} 
with some $g(\delta) \ge 0, \forall \delta$ and $g(\delta)$ converge to zero as $\delta \to 0$. 
\begin{enumerate}
    \item 
As $D_0^\delta =0$, so we begin with interval $[0, \tau_1^\delta]$. 
\begin{enumerate}
    \item Consider the case when $ \tau_1^\delta = T$. Then $\bdot{y}^\delta > 0$ (which implies that $\bdot{x}(t) > 0$), for all $t \in T$. Thus, we have $y^\delta(t) = y^0(t) = x(t)$ for all $t \in T$, which trivially satisfy \eqref{Eqn_ydel_le_y}.
    \item Consider the case when $ \tau_1^\delta < T$. Then, using the same arguments as above, we have $y^\delta(t) = y^0(t) = x(t)$ for all $t \in [0,\tau_1^\delta]$. 
\end{enumerate}
\item Now, we consider the next interval $[\tau_1^\delta, D_1^\delta]$, which exists when $\tau_1^\delta<T$ (i.e., second sub-case (1.a) of above case). 
\begin{enumerate}
    \item Consider the case when $D_1^\delta = T$. Then for all  $t \in [\tau_1^\delta, T]$, we have $\bdot{y}^\delta(t)=0$. This implies that 
$y^\delta(t) = y^0(t)$, for all $t \in [\tau_1^\delta, T]$ (observe that $y^\delta (\tau_1^\delta) = y^0 (\tau_1^\delta)= x(\tau_1^\delta)$). 
\item Consider the case when $D_1^\delta < T$. Then using the same arguments as before, we have 
$y^\delta(t) = y^0(t)$, for all $t \in [\tau_1^\delta, D_1^\delta]$. Also, 
\begin{equation}
   y^\delta(D_1^\delta) = y^0(D_1^\delta) = x(D_1^\delta) + \delta.
   \label{Eqn_y_delta_at_D1}
\end{equation} 
\end{enumerate}
\item Now, we consider the next interval $[D_1^\delta, \tau_2^\delta]$, which exists when $D_1^\delta<T$ (i.e., second sub-case (2.a) of above case). We have $\bdot{y}^\delta(t)>0$, for all $t \in [D_1^\delta, \tau_2^\delta]$. 
\begin{enumerate}
    \item Consider the case when $\tilde{D}_1^\delta = \tau_2^\delta$, we have $y^\delta(t) > y^0(t) = y^0 ({D}_1^\delta)$ for all $t \in ({D}_1^\delta,  \tau_2^\delta]$.  Now, consider 
\begin{eqnarray*}
    y^\delta(\tilde{D}_1^\delta) - y^0(\tilde{D}_1^\delta) &=& y^\delta(\tilde{D}_1^\delta) - y^0({D}_1^\delta)  \\
    &=& y^\delta(\tilde{D}_1^\delta) - y^\delta({D}_1^\delta) \\
    &=& \int_{{D}_1^\delta}^{\tilde{D}_1^\delta} \psi_\delta (x(s)- y^\delta (s)) (\bdot{x}(s))^+ ds \\
    &=& \int_{{D}_1^\delta}^{\tilde{D}_1^\delta}  \bdot{x}(s) ds \\
    &=& x(\tilde{D}_1^\delta) - x({D}_1^\delta) \quad (\mbox{by definition of } \tilde{D}, x(\tilde{D}_1^\delta)= y^0(\tilde{D}_1^\delta)) \\
    &=& y^{\delta} ({D}_1^\delta) - \left(y^{\delta} ({D}_1^\delta)-\delta \right) = \delta \quad (\mbox{see } \eqref{Eqn_y_delta_at_D1}). 
\end{eqnarray*}
\item Consider the case when $\tilde{D}_1^\delta < \tau_2^\delta$. Same arguments will follow as above part and we have 
$$
y^\delta(\tilde{D}_1^\delta) - y^0(\tilde{D}_1^\delta) = \delta. 
$$
Now, we consider the interval $[\tilde{D}_1^\delta , \tau_2^\delta]$, we have $\bdot{y}^\delta(t) = \bdot{y}^0(t) = \bdot{x}$. And we know that $y^\delta(\tilde{D}_1^\delta) -  \delta = y^0(\tilde{D}_1^\delta)= x(\tilde{D}_1^\delta)$. Thus, we have $$y^\delta(t) - y^0(t) = y^\delta(\tilde{D}_1^\delta) - y^0(\tilde{D}_1^\delta) = \delta, \mbox{ for all } t \in [\tilde{D}_1^\delta , \tau_2^\delta]. $$ 
Hence, $y^\delta(\tau_2^\delta) -  \delta = y^0(\tau_2^\delta) = x(\tau_2^\delta). $
\end{enumerate}
\item Now, we consider the next interval $[\tau_2^\delta, D_2^\delta]$, which exists if  $\tau_2^\delta<T$. 
\begin{enumerate}
    \item Consider the case when $D_2^\delta = T$. Then for all  $t \in [\tau_2^\delta, T]$, we have $\bdot{y}^\delta(t)=0$. This implies that 
$y^\delta(t) - \delta = y^0(t)$, for all $t \in [\tau_2^\delta, T]$ (observe that $y^\delta (\tau_2^\delta) - \delta = y^0 (\tau_2^\delta)= x(\tau_2^\delta)$). 
\item Consider the case when $D_2^\delta < T$. Then using the same arguments as before, we have 
$y^\delta(t) - \delta = y^0(t)$, for all $t \in [\tau_2^\delta, D_2^\delta]$. Also, 
\begin{equation}
   y^\delta(D_2^\delta) - \delta = y^0(D_2^\delta) = x(D_2^\delta) + \delta.
   \label{Eqn_y_delta_at_D2}
\end{equation} 
\end{enumerate}

\item Now, we consider the next interval $[D_2^\delta, \tau_3^\delta]$, which exists when $D_2^\delta<T$. We have $\bdot{y}^\delta(t)>0$, for all $t \in [D_2^\delta, \tau_3^\delta]$. 
\begin{enumerate}
    \item Consider the case when $\tilde{D}_2^\delta = \tau_3^\delta$, we have $y^\delta(t) > y^0(t) = y^0 ({D}_2^\delta)$ for all $t \in ({D}_2^\delta,  \tau_3^\delta]$.  Now, consider 
\begin{eqnarray*}
    y^\delta(\tilde{D}_2^\delta) - y^0(\tilde{D}_2^\delta) &=& y^\delta(\tilde{D}_2^\delta) - y^0({D}_2^\delta)  \\
    &=& y^\delta(\tilde{D}_2^\delta) - (y^\delta({D}_2^\delta)- \delta) \\
    &=& \int_{{D}_2^\delta}^{\tilde{D}_2^\delta} \psi_\delta (x(s)- y^\delta (s)) (\bdot{x}(s))^+ ds +\delta\\
    &=& \int_{{D}_2^\delta}^{\tilde{D}_2^\delta}  \bdot{x}(s) ds  +\delta \\
    &=& x(\tilde{D}_2^\delta) - x({D}_2^\delta)  +\delta \quad (\mbox{by definition of } \tilde{D}, x(\tilde{D}_2^\delta)= y^0(\tilde{D}_2^\delta)) \\
    &=& \left(y^{\delta} ({D}_2^\delta) - \delta\right) - \left(y^{\delta} ({D}_2^\delta)- 2\delta \right)  +\delta = 2\delta \quad (\mbox{see } \eqref{Eqn_y_delta_at_D2}). 
\end{eqnarray*}
\item Consider the case when $\tilde{D}_2^\delta < \tau_3^\delta$. Same arguments will follow as above part and we have 
$$
y^\delta(\tilde{D}_2^\delta) - y^0(\tilde{D}_2^\delta) = 2 \delta. 
$$
Now, we consider the interval $[\tilde{D}_2^\delta , \tau_3^\delta]$, we have $\bdot{y}^\delta(t) = \bdot{y}^0(t) = \bdot{x}$. And we know that $y^\delta(\tilde{D}_2^\delta) -  2 \delta = y^0(\tilde{D}_2^\delta)= x(\tilde{D}_2^\delta)$. Thus, we have $$y^\delta(t) - y^0(t) = y^\delta(\tilde{D}_2^\delta) - y^0(\tilde{D}_2^\delta) = 2 \delta, \mbox{ for all } t \in [\tilde{D}_2^\delta , \tau_3^\delta]. $$ 
Hence, 
$y^\delta(\tau_3^\delta) -  2\delta = y^0(\tau_3^\delta) = x(\tau_3^\delta). $
\end{enumerate}

\end{enumerate}

\newpage
\textbf{Closed form expression of $y^\delta$:}
By the definition of the supremum, $L_{mx}(s)$ is a non-decreasing function of $s$. Therefore, its derivative $\bdot{L}_{mx}(s)$ satisfies:
$$\bdot{L}_{mx}(s) \ge 0 \quad \text{for almost all } s \in T.$$
This implies that the indicator function $\mathds{1}_{\{\bdot{L}_{mx} \ge 0\}}$ is identically equal to $1$ whenever $\bdot{L}_{mx}(s) \neq 0$. Consequently, the differential equation simplifies to:
$$dy^{\delta_n} = \psi_{\delta_n}(L_{mx} - y^{\delta_n}) \, dL_{mx}.$$
$$
\frac{dy^{\delta_n}} {d L_{mx}} \times  \frac{d L_{mx}}{ds} = \psi_{\delta_n}(L_{mx} - y^{\delta_n}) \, \frac{d L_{mx}}{ds}. 
$$

We treat $y^{\delta_n}$ as a function of the level of the maximum $L_{mx}$ rather than time $s$. Let $x = L_{mx}(s)$. The equation becomes:
$$\frac{dy^{\delta_n}}{dx} = \psi_{\delta_n}(x - y^{\delta_n}).$$
Using the definition of $\psi_\delta(d)$ for the regime $d \in [-\delta_n, 0]$ (where the tracking occurs):
$$\frac{dy^{\delta_n}}{dx} = 1 + \frac{x - y^{\delta_n}}{\delta_n}.$$
Rearrange the equation into standard linear form:
$$\frac{dy^{\delta_n}}{dx} + \frac{1}{\delta_n} y^{\delta_n} = 1 + \frac{x}{\delta_n}.$$
To solve this, we use the integrating factor $e^{\int \frac{1}{\delta_n} dx} = e^{x/\delta_n}$:
$$\frac{d}{dx} \left( y^{\delta_n} e^{x/\delta_n} \right) = \left( 1 + \frac{x}{\delta_n} \right) e^{x/\delta_n}.$$
Integrating both sides wrt $M$:
$$y^{\delta_n} e^{x/\delta_n} = \int \left( 1 + \frac{x}{\delta_n} \right) e^{x/\delta_n} \, dx. $$
Using integration by parts for the second term ($\int \frac{x}{\delta_n} e^{x/\delta_n} dx = x e^{x/\delta_n} - \int e^{x/\delta_n} dx$):
$$y^{\delta_n} e^{x/\delta_n} = \int e^{x/\delta_n} dx + x e^{x/\delta_n} - \int e^{x/\delta_n} dx + C. $$
$$y^{\delta_n} e^{x/\delta_n} = x e^{x/\delta_n} + C.$$

\textbf{Applying Initial Conditions:}
At $s=0$, let $x(0) = x_0$ and $y^{\delta_n}(0) = y_0$.
$$y_0 e^{x_0/\delta_n} = x_0 e^{x_0/\delta_n} + C \implies C = (y_0 - x_0) e^{x_0/\delta_n}.$$

Substituting $C$ back into the expression:
$$y^{\delta_n}(x) e^{x/\delta_n} = x e^{x/\delta_n} + (y_0 - x_0) e^{x_0/\delta_n}.$$
Divide by $e^{x/\delta_n}$:
$$y^{\delta_n}(x) = x + (y_0 - x_0) e^{-\frac{x - x_0}{\delta_n}}.$$

Returning to the time variable $s$ and replacing $x$ with $L_{mx}(s)$, the explicit solution is:
$$y^{\delta_n}(s) = L_{mx}(s) + (y_0 - L_{mx}(0)) \exp\left( -\frac{L_{mx}(s) - L_{mx}(0)}{\delta_n} \right). $$
This expression holds for the region where $L_{mx}(s) - y^{\delta_n}(s) \in [-\delta_n, 0]$. If $L_{mx}(s) - y^{\delta_n}(s) > 0$, the function $\psi_{\delta_n} = 1$, yielding the linear growth $y^{\delta_n}(s) = y_0 + (L_{mx}(s) - L_{mx}(0))$.

\textbf{Try to prove:} $y^\delta(s) \le y^0(s) + g(\delta)$, for all $s$ and with $g(\delta) \to 0$, when $\delta \to 0$. 

\textbf{Step 1: Properties of $L_{mx}$.}

Since $x(\cdot)$ is absolutely continuous and $| \bdot{x}(s) | \le M$, it follows that $x$ is Lipschitz continuous. Hence $L_{mx}(\cdot)$ is also Lipschitz and therefore absolutely continuous. Moreover,
\[
0 \le \bdot{L}_{mx}(s) \le M \quad \text{for almost all } s \in T.
\]

\medskip

\textbf{Step 2: Define the error.}
 Let $d^\delta(s) := L_{mx}(s) - y^\delta(s)$. Using the explicit expression for $y^\delta(s)$, we obtain
\[
d^\delta(s)
=
\bigl(L_{mx}(0) - y_0\bigr)
\exp\!\left(-\frac{L_{mx}(s)-L_{mx}(0)}{\delta}\right).
\]
Define
\[
D_0 := L_{mx}(0) - y_0 \ge 0.
\]
Thus,
\[
d^\delta(s)
=
D_0 \exp\!\left(-\frac{L_{mx}(s)-L_{mx}(0)}{\delta}\right).
\]

\medskip

\textbf{Step 3: Basic bounds.}

Since $L_{mx}(\cdot)$ is non-decreasing,
\[
L_{mx}(s) - L_{mx}(0) \ge 0,
\]
which implies
\[
0 \le d^\delta(s) \le D_0.
\]
Consequently,
\[
y^\delta(s) \le L_{mx}(s) = y^0(s), \quad \forall s.
\]

\medskip

\textbf{Step 4: Growth bound on $L_{mx}$.}

From $0 \le \bdot{L}_{mx}(s) \le M$, we obtain
\[
L_{mx}(s) - L_{mx}(0)
=
\int_0^s \bdot{L}_{mx}(r)\,dr
\le Ms
\le MT.
\]

\medskip

\textbf{Step 5: Uniform bound on $d^\delta(s)$.}

We write
\[
d^\delta(s)
=
D_0 \exp\!\left(-\frac{z(s)}{\delta}\right),
\quad \text{where } z(s) := L_{mx}(s) - L_{mx}(0).
\]

We consider two cases.

\medskip

\emph{Case 1: $z(s) \le \delta$.}

Since $z(s) \ge 0$, we use the inequality
\[
e^{-u} \le 1 - \frac{u}{2}, \quad \text{for } 0 \le u \le 1,
\]
to obtain
\[
d^\delta(s)
=
D_0 e^{-z(s)/\delta}
\le
D_0 \left(1 - \frac{z(s)}{2\delta}\right).
\]
Since $z(s) \le \delta$, this implies
\[
d^\delta(s) \le D_0.
\]
Moreover, using $z(s) \le \delta$, we obtain the crude bound
\[
d^\delta(s) \le D_0 \le \frac{D_0}{\delta} z(s) + D_0 \le C_1 \delta,
\]
for some constant $C_1$.

\medskip

\emph{Case 2: $z(s) > \delta$.}

Then
\[
\frac{z(s)}{\delta} > 1,
\]
and hence
\[
d^\delta(s)
=
D_0 e^{-z(s)/\delta}
\le
D_0 e^{-1}.
\]
In fact, since $z(s) \le MT$, we obtain
\[
d^\delta(s)
\le
D_0 e^{-z(s)/\delta}
\le
D_0 e^{-c/\delta}
\]
for some $c>0$, which is exponentially small in $\delta$.

\medskip

\textbf{Step 6: Conclusion.}

Combining both cases, there exists a constant $C>0$ such that
\[
d^\delta(s) \le C \delta, \quad \forall s \in T.
\]

Since
\[
y^\delta(s) = L_{mx}(s) - d^\delta(s),
\]
we obtain
\[
|y^\delta(s) - y^0(s)| = d^\delta(s) \le C\delta.
\]

Taking supremum over $s \in T$ completes the proof:
\[
\sup_{s \in T} |y^\delta(s) - y^0(s)| \le C\delta.
\]
\newpage 
\textbf{Proof:}
We divide the proof into several steps.

\medskip
\noindent
{\bf Step 1: Uniform convergence of state trajectories.}

Let $x^{\sigma_n}(\cdot)$ and $x^\sigma(\cdot)$ be the solutions corresponding to $\sigma_n$ and $\sigma$, respectively. Define
$e_n(s):=x^{\sigma_n}(s)-x^\sigma(s)$.
Then
$\bdot e_n(s)
= f(s,x^{\sigma_n}(s),\sigma_n) - f(s,x^\sigma(s),\sigma)$.
Add and subtract $f(s,x^{\sigma_n}(s),\sigma)$:
\[
\bdot e_n(s)
= \big[f(s,x^{\sigma_n}(s),\sigma_n) - f(s,x^{\sigma_n}(s),\sigma)\big]
+ \big[f(s,x^{\sigma_n}(s),\sigma) - f(s,x^\sigma(s),\sigma)\big].
\]
Using continuity in $\sigma$ and Lipschitz continuity in $x$, there exists $L>0$ such that
\[
|\bdot e_n(s)|
\le \omega_n(s) + L|e_n(s)|,
\]
where 
$\omega_n(s):=\big|f(s,x^{\sigma_n}(s),\sigma_n) - f(s,x^{\sigma_n}(s),\sigma)\big| \to 0$, 
uniformly in $s$. By Grönwall’s inequality,
\[
|e_n(s)|
\le \int_0^s \omega_n(r)e^{L_r(s-r)}\,dr.
\]
Taking supremum over $s\inT$ gives
$\|x^{\sigma_n}-x^\sigma\|_\infty \to 0$.

\medskip
\noindent
{\bf Step 2: Uniform convergence of $g_n$.}

Define
$g_n(s):=L_{mx}(s,x^{\sigma_n}(s)), \quad g(s):=L_{mx}(s,x^\sigma(s))$.
By continuity of $L_{mx}$ and uniform convergence of $x^{\sigma_n}$,
\[
\|g_n - g\|_\infty \to 0.
\]

\medskip
\noindent
{\bf Step 3: Approximation of the running supremum.}

Fix $n$ and denote $y_n := y^{(\sigma_n,\delta_n)}$ and 
$g_n(s):=L_{mx}(s,x^{\sigma_n}(s))$. Define the running supremum
\[
\bar y_n(s):=\max\{y_0,\sup_{t\in[0,s]} g_n(t)\}.
\]

We establish that $y_n$ uniformly approximates $\bar y_n$.

\medskip
\noindent
\emph{Upper bound.}
We claim that for all $s\inT$,
\[
y_n(s) \le \bar y_n(s).
\]
Indeed, suppose by contradiction that there exists $s_0$ such that
$y_n(s_0)>\bar y_n(s_0)$. Let
\[
\tau:=\inf\{s\inT: y_n(s)>\bar y_n(s)\}.
\]
Then $y_n(\tau)=\bar y_n(\tau)$ and for $s>\tau$ sufficiently close,
$y_n(s)>g_n(s)$, so that $g_n(s)-y_n(s)<0$. In this region,
$\psi_{\delta_n}(g_n(s)-y_n(s))\le 1$, and hence
\[
\bdot y_n(s) = \bdot g_n(s)\,\psi_{\delta_n}(g_n(s)-y_n(s))
\le \bdot g_n(s).
\]
This implies that locally $y_n$ cannot increase faster than $g_n$,
while $\bar y_n$ increases exactly when $g_n$ attains new maxima.
Thus $y_n(s)$ cannot exceed $\bar y_n(s)$, yielding a contradiction.
Hence,
\begin{equation}
\label{eq:upper_bound}
y_n(s) \le \bar y_n(s), \quad \forall s\inT.
\end{equation}

\medskip
\noindent
\emph{Lower bound.}
We now show that $y_n$ tracks $\bar y_n$ from below up to an error of order $\delta_n$.

Let $s\inT$ and fix $\varepsilon>0$. By definition of $\bar y_n(s)$,
there exists $t_\varepsilon \in [0,s]$ such that
\[
g_n(t_\varepsilon) \ge \bar y_n(s) - \varepsilon.
\]
We compare the evolution of $y_n$ on $[t_\varepsilon,s]$.

Define the gap $d_n(r):=g_n(r)-y_n(r)$. Observe that whenever
$d_n(r)\in[-\delta_n,0]$, we have
\[
\psi_{\delta_n}(d_n(r)) = 1 + \frac{d_n(r)}{\delta_n},
\]
and hence
\[
\bdot d_n(r)
= \bdot g_n(r)\bigl(1-\psi_{\delta_n}(d_n(r))\bigr)
= -\frac{\bdot g_n(r)}{\delta_n}\, d_n(r).
\]
Thus, in this regime, $d_n(\cdot)$ satisfies a linear differential
equation that drives it toward $0$ at a rate proportional to $1/\delta_n$.
In particular, as long as $d_n(r)\in[-\delta_n,0]$, the interval
$[-\delta_n,0]$ is invariant for this dynamics.

We show that $d_n$ cannot become
too negative. Assume that $|\bdot g_n(s)| \le M$ uniformly in $s$ and $n$.
We consider two cases.

\smallskip
\noindent
\emph{Case 1: $d_n(s)\in[-\delta_n,0]$.}
In this region,
\[
\bdot d_n(s)
= \bdot g_n(s)\bigl(1-\psi_{\delta_n}(d_n(s))\bigr)
= -\frac{\bdot g_n(s)}{\delta_n}\, d_n(s),
\]
so that $d_n(\cdot)$ is driven toward $0$ and remains in
$[-\delta_n,0]$.

\smallskip
\noindent
\emph{Case 2: $d_n(s)<-\delta_n$.}
Then $\psi_{\delta_n}=0$, and hence
\[
\bdot d_n(s)=\bdot g_n(s).
\]
Thus,
\[
\frac{d}{ds} d_n(s) \ge -M.
\]
Starting from the boundary $d_n=-\delta_n$, it follows that
$d_n$ can decrease at most at rate $M$, and therefore cannot
drop below $-M\delta_n$ without first spending time of order
$\delta_n$ in the transition region $[-\delta_n,0]$.

\smallskip
\noindent
Combining the above, we conclude that
\[
d_n(s) \ge - M \delta_n, \quad \forall s\inT,
\]
which implies
\[
y_n(s) \ge g_n(s) - M\delta_n.
\]



Finally, using the definition of $\bar y_n(s)$ and the choice of
$t_\varepsilon$,
\[
y_n(s)
\;\ge\; y_n(t_\varepsilon)
\;\ge\; g_n(t_\varepsilon) - M\,\delta_n
\;\ge\; \bar y_n(s) - \varepsilon - M\,\delta_n.
\]
Since $\varepsilon>0$ is arbitrary, we obtain
\[
y_n(s) \ge \bar y_n(s) - M\,\delta_n.
\]

\medskip
\noindent
\emph{Conclusion.}
Combining \eqref{eq:upper_bound} and the above lower bound, we conclude
that
\[
\|y_n - \bar y_n\|_\infty \le M\,\delta_n,
\]
which shows that $y_n$ uniformly approximates the running supremum
$\bar y_n$.

\ignore{
\medskip
\noindent
{\bf Step 3: Structural properties of $y^{(\sigma_n,\delta_n)}$.}

Fix $n$ and write $y_n := y^{(\sigma_n,\delta_n)}$ and $g_n$ as above. We analyze the dynamics:
\[
\bdot y_n(s) = \bdot g_n(s)\,\psi_{\delta_n}(g_n(s)-y_n(s)).
\]

\emph{Claim 1:} For all $s\inT$,
\[
y_n(s) \le \max\{y_0,\sup_{t\in[0,s]} g_n(t)\}.
\]

\emph{Proof of Claim 1:}
Define
$\bar y_n(s):=\max\{y_0,\sup_{t\in[0,s]} g_n(t)\}.$

Suppose, by contradiction, that there exists $s_0$ such that $y_n(s_0)>\bar y_n(s_0)$. Let
\[
\tau:=\inf\{s: y_n(s)>\bar y_n(s)\}.
\]
Then $y_n(\tau)=\bar y_n(\tau)$ and for $s>\tau$ close enough,
$y_n(s)>g_n(s).$ 
Hence $g_n(s)-y_n(s)<0$, and in fact for sufficiently small interval, $g_n(s)-y_n(s)<-\delta_n$, so $\psi_{\delta_n}=0$ and $\bdot y_n(s)=0$. Thus $y_n$ cannot increase beyond $\bar y_n$, contradiction. Hence Claim~1 holds.

\medskip

\emph{Claim 2:} For all $s\inT$, 
$g_n(s) - y_n(s) \ge -\delta_n.$ \\
\emph{Proof of Claim 2:}
Assume there exists $s_0$ such that $g_n(s_0)-y_n(s_0)<-\delta_n$. Let
\[
\tau:=\inf\{s: g_n(s)-y_n(s)<-\delta_n\}.
\]
Then at $s=\tau$,
$g_n(\tau)-y_n(\tau)=-\delta_n$.
For $s>\tau$ close enough, we have $\psi_{\delta_n}=0$, hence $\bdot y_n(s)=0$, while $g_n$ evolves continuously. Therefore $g_n(s)-y_n(s)$ cannot decrease further below $-\delta_n$, contradiction. Hence Claim 2 holds.

\medskip

Combining Claims 1 and 2,
\[
\bar y_n(s) - \delta_n \le y_n(s) \le \bar y_n(s), \quad \forall s\inT.
\]
Thus,
\begin{equation}
\label{eq:key_bound_detailed}
\|y_n - \bar y_n\|_\infty \le \delta_n.
\end{equation}
}
\medskip
\noindent
{\bf Step 4: Stability of the running supremum.}

Define
$\bar y(s):=\max\{y_0,\sup_{t\in[0,s]} g(t)\} = y^{(\sigma,0)}(s)$.
We show
\[
\|\bar y_n - \bar y\|_\infty \le \|g_n - g\|_\infty.
\]

Indeed, for any $s\inT$,
\[
\sup_{t\in[0,s]} g_n(t)
\le \sup_{t\in[0,s]} \big(g(t) + \|g_n-g\|_\infty\big)
= \sup_{t\in[0,s]} g(t) + \|g_n-g\|_\infty,
\]
and similarly,
\[
\sup_{t\in[0,s]} g(t)
\le \sup_{t\in[0,s]} g_n(t) + \|g_n-g\|_\infty.
\]
Hence
\[
\left|\sup_{t\in[0,s]} g_n(t) - \sup_{t\in[0,s]} g(t)\right|
\le \|g_n-g\|_\infty.
\]
Taking maximum with $y_0$ preserves this bound, yielding
\[
\|\bar y_n - \bar y\|_\infty \le \|g_n - g\|_\infty \to 0.
\]

\medskip
\noindent
{\bf Step 5: } Using the triangle inequality,
\[
\|y_n - y^{(\sigma,0)}\|_\infty
\le \|y_n - \bar y_n\|_\infty + \|\bar y_n - \bar y\|_\infty.
\]
Using \eqref{eq:key_bound_detailed} and Step 4,
\[
\|y_n - y^{(\sigma,0)}\|_\infty
\le \delta_n + \|g_n - g\|_\infty \to 0.
\]
This completes the proof.
\eop

\newpage
\textbf{Proof:}
Fix a trajectory $x(\cdot)$ and write $L_r(s):=L_{mx}(s)$. Observe first that $y^\delta$ is absolutely continuous and non-decreasing. Indeed, from \eqref{eq:ydelta}, $\bdot{y}^\delta(s)\ge 0$ since $\psi_\delta\ge 0$ and the indicator enforces $\bdot{L}(s)\ge 0$.

\medskip

\noindent\textit{Step 1: Upper bound.}
We claim that for all $s\in[r,T]$,
$$\limsup_{\delta\to 0} y^\delta(s)\le \max\left\{y_0,\ \sup_{t\in[r,s]} L_r(t)\right\}.$$ 
Say it is not true. Then there exists a $\bar{\delta}>0$ such that for atleast one $s\in[r,T]$
$$ y^\delta(s) > \max\left\{y_0,\ \sup_{t\in[r,s]} L_r(t)\right\}, \forall \delta \le \bar{\delta}.$$ 

Then we have following  sub-cases for such $s$:
\begin{enumerate}
    \item $y^\delta(s) > y_0, \quad \forall \delta \le \bar{\delta}$. In this case,

     \item $y^\delta(s) > \sup_{t\in[r,s]} L_r(t),  \quad \forall \delta \le \bar{\delta}$
     \item $y^\delta(s) > y_0,  \mbox{ for some } \delta \le \bar{\delta}, \mbox{ and for other } \delta \le \bar{\delta}, y^\delta(s) > \sup_{t\in[r,s]} L_r(t). 
     $
\end{enumerate}


\medskip

\noindent\textit{Step 2: Lower bound.}
Let $s\in[r,T]$ and let $t^*\in[r,s]$ be such that
$L_r(t^*)=\sup_{t\in[r,s]} L_r(t)$.
If $L_r(t^*)\le y_0$, then trivially $y^\delta(s)\ge y_0$ for all $s$. Otherwise, consider any interval where $\bdot{L}(t)\ge 0$ and $L_r(t)>y^\delta(t)$. In this region, $\psi_\delta(L-y^\delta)=1$ whenever $L_r(t)-y^\delta(t)>0$, hence
$\bdot{y}^\delta(t)=\bdot{L}(t)$,
so $y^\delta$ tracks $L$ exactly until it reaches it. Thus, whenever $L$ increases above the current level of $y^\delta$, the process $y^\delta$ catches up with $L$ up to an error of order $\delta$. Consequently,
\[
\liminf_{\delta\to 0} y^\delta(s)
\ge
\max\Big\{y_0,\ \sup_{t\in[r,s]} L_r(t)\Big\}.
\]
\medskip

\noindent\textit{Step 3: Boundary layer analysis.} We quantify the deviation between $y^\delta(s)$ and the running maximum
\[
\bar y(s):=\max\Big\{y_0,\ \sup_{t\in[r,s]} L_r(t)\Big\}.
\]
Define the error process $e^\delta(s):=\bar y(s)-y^\delta(s)\ge 0$. We show that $e^\delta(s)\le \delta$ for all $s\in[r,T]$. First observe that $\bar y(\cdot)$ is non-decreasing and evolves according to
\[
\bdot{\bar y}(s)=\bdot{L}(s)\,\mathds{1}_{\{L_r(s)=\bar y(s),\, \bdot{L}(s)\ge 0\}},
\]
that is, $\bar y$ increases only when $L$ attains a new maximum. On the other hand, $y^\delta(\cdot)$ evolves as
\[
\bdot{y}^\delta(s)=\bdot{L}(s)\,\psi_\delta(L_r(s)-y^\delta(s))\,\mathds{1}_{\{\bdot{L}(s)\ge 0\}}.
\]

Let $d(s):=L_r(s)-y^\delta(s)$. We distinguish the behavior of the dynamics according to the value of $d(s)$. If $d(s)>0$, then $\psi_\delta(d)=1$, and hence $\bdot{y}^\delta(s)=\bdot{L}(s)\mathds{1}_{\{\bdot{L}(s)\ge 0\}}$. In this region, whenever $L$ increases, $y^\delta$ increases with the same rate, so the gap $d(s)$ remains constant and no additional error is generated.

If $d(s)\in[-\delta,0]$, then $\psi_\delta(d)=1+\frac{d(s)}{\delta}\in[0,1]$, and therefore
\[
\bdot{y}^\delta(s)=\bdot{L}(s)\left(1+\frac{d(s)}{\delta}\right)\mathds{1}_{\{\bdot{L}(s)\ge 0\}} \le \bdot{L}(s).
\]
In this regime, $y^\delta$ may lag behind $L$, but only within a band of width $\delta$. Indeed, the gap satisfies
\[
\frac{d}{ds}d(s)=\bdot{L}(s)-\bdot{y}^\delta(s)
=\bdot{L}(s)\left(1-\psi_\delta(d(s))\right)\mathds{1}_{\{\bdot{L}(s)\ge 0\}}
=-\frac{\bdot{L}(s)}{\delta}d(s)\mathds{1}_{\{\bdot{L}(s)\ge 0\}},
\]
which drives $d(s)$ toward $0$ whenever $\bdot{L}(s)\ge 0$. In particular, the dynamics prevent $d(s)$ from decreasing below $-\delta$, since at $d(s)=-\delta$ we have $\psi_\delta=0$ and hence $\bdot{y}^\delta(s)=0$, so no further decrease is possible.

Finally, if $d(s)<-\delta$, then $\psi_\delta(d)=0$ and $\bdot{y}^\delta(s)=0$, so $y^\delta$ remains constant. However, in this case any increase in $L$ increases $d(s)$ and pushes the trajectory back into the region $[-\delta,0]$. Hence the region $\{d<-\delta\}$ is not sustained under the dynamics when $\bdot{L}(s)\ge 0$.

Combining the above cases, we conclude that $y^\delta(s)$ can exceed $L_r(s)$ by at most $\delta$, and any lag relative to $\bar y(s)$ can only occur within this boundary layer. Since $\bar y(s)\ge L_r(s)$ and increases only when $L$ increases, it follows that
$0 \le \bar y(s)-y^\delta(s)\le \delta, \quad \forall s\in[r,T]$.
Thus, the discrepancy between $y^\delta$ and $\bar y$ is uniformly bounded by $\delta$ over $[r,T]$, and is confined to a vanishing boundary layer around the switching surface $\{y=L\}$.


\medskip

\noindent\textit{Step 4: Convergence.}
Combining the upper and lower bounds, we obtain uniform convergence:
\[
\sup_{s\in[r,T]}
\left|y^\delta(s) - \max\Big\{y_0,\sup_{t\in[r,s]}L_r(t)\Big\}\right| \le \delta \;\xrightarrow[\delta\to 0]{}\; 0.
\]
Thus, $y^\delta(\cdot)\to y(\cdot)$ uniformly on $[r,T]$, where $y$ is the running maximum process.
 \eop
}
 

{\ignore{
\newpage

We now prove that the value function $v$ defined in \eqref{Eqn_value_fun} is a viscosity solution of PDE  and further provide a verification theorem. Towards that we begin with recalling the definition of super differential set. 
Let $z= (\tau, x, y)$ and $\hat{z} = ( \hat{\tau}, \hat{x}, \hat{y})$ represent  tuples in an open set of $\mathbb{R}^{1+n+1}$, $|.|$ the norm, then the super differential of   $v$ at any $\hat{z}$ is defined as (\cite{zhou1993verification}),
\begin{eqnarray}
&D^+  v( \hat{z}) := \bigg\{ p=(p_\tau, p_x, p_y) \in \mathbb{R}^{1+n+1}: 
 \lim  \sup_{  z \to \hat{z} } \frac{v(z) - v(\hat{z}) - p (z - \hat{z} ) } {|z - \hat{z}| } \le 0 \bigg\}. 
 \label{Eqn_super_diff}
\end{eqnarray}
\begin{thm} Assume \textbf{A.0}-\textbf{A.1}. \\
(i) Assume $f, L$ and $ \Psi$ are continuously differentiable. Then a given admissible tuple of functions\footnote{basically, $(\x^*)$ is the unique ODE-solution under $\u^*$.} $(\x^*, \u^*)$   is optimal for the control problem  if and only if  the following is true for almost all $t$;   there exists a tuple 
          $( p_\tau^*(t), p_x^*(t), p_y^*(t)) \in D^+  v(t, x^*(t), \vee_{\tau, y}^{t_1}(\x^*))$  satisfying, 
     $
        p_\tau^*(t)=  H\left(t, x^*(t), \vee_{\tau, y}^{t_1}(\x^*),   u^*(t), p_x^*(t),  p_y^*(t) \right). $  
\label{thm_existance_vis_control_general}
\end{thm} 

\textbf{Proof of part (i):} Let us consider the tuple
 $p(t):= ( p_\tau(t), p_x(t), p_y(t)) \in D^+  v(t, x(t),  \vee_{\tau, y}^{t_1}(\x)) $ with  $u(t) \in  \arg \max_{u } H(t, x(t), \vee_{\tau, y}^{t_1}(\x ),u, p_x(t),  p_y(t)) $. 
 This means $p$ lies in the super differential of the value function $v$ at the point $(t, x(t),  \vee_{\tau, y}^{t_1}(\x))$. 
 By \cite[Lemma 2.3]{zhou1993verification}, this implies the inequality   (also see \eqref{Eqn_super_diff}), 
 \begin{equation}
      p_\tau(t) \ge   \sup_{u \in \mathcal{U}} H(t, x(t), \vee_{\tau, y}^{t_1}(\x ),u, p_x(t),  p_y(t)).
      \label{Eqn_super_inequality}
 \end{equation}
 Also, as $u^*(t) \in  \arg \max_{u }  H(t, x(t), \vee_{\tau, y}^{t_1}(\x ),u, p_x(t),  p_y(t))$, we have 
 $$ H(t, x(t), \vee_{\tau, y}^{t_1}(\x ),u^*, p_x(t),  p_y(t)) = \sup_{u \in \mathcal{U}}  H(t, x(t), \vee_{\tau, y}^{t_1}(\x ),u, p_x(t),  p_y(t)). $$  This implies 
 \begin{equation}
  - p_\tau(t) +   H(t, x(t), \vee_{\tau, y}^{t_1}(\x ),u^*, p_x(t),  p_y(t)) =0.  
  \label{Eqn_condition_thm2}
 \end{equation} 
 We set $f(t):=f\left(t, x(t), u^*(t)\right)$, etc., to simplify the notation. Since both $v$ and $x$ are Lipschitz, $t \mapsto v\left(t, x(t), \vee_{\tau, y}^{t_1}(\x)\right)$ is differentiable almost everywhere. Fix $r \in[\tau, t_1]$ such that $\left.\frac{d}{d t} v\left(t, x(t), \vee_{\tau, y}^{t_1}(\x) \right)\right|_{t=r}$ exists, that $\lim _{h \rightarrow 0+} \int_r^{r+h} f(t) d t= f(r)$, and that \eqref{Eqn_condition_thm2} holds. Then,
$$
\begin{aligned}
\left.\frac{d}{d t} v\left(t, x(t), \vee_{\tau, y}^{t_1}(\x) \right)\right|_{t=r} & =\lim _{h \rightarrow 0+} \frac{v\left(r+h, x(r+h), \vee_{\tau, y}^{r+h}(\x)\right)-v\left(r, x(r), \vee_{\tau, y}^r(\x)\right)}{h} \\
& =v^{\prime}(\left(r, x(r), \vee_{\tau, y}^r(\x)\right) \\
& \leqslant p_\tau(r)+p_x(r) \cdot f(r) + p_y(r) f_y(r) \mathds{1} _{\{y =  L_{mx}(r,x(r)), f_y(r)>0\}}\quad \text { (by \cite[Lemma 2.1]{zhou1993verification}) } \\
& =-L_r(r) \quad \text { (by \eqref{Eqn_condition_thm2}). }
\end{aligned}
$$

Hence we conclude that
$$
v\left(T, x(T), \vee_{\tau, y}^{t_1}(\x)\right)-v(\tau, x,y)=\left.\int_\tau^{t_1} \frac{d}{d t} v\left(t, x(t), \vee_{\tau, y}^{t_1}(\x)\right)\right|_{t=r} d r \leqslant-\int_\tau^{t_1} L_r(r) d r,
$$
which implies
$$
J\left(\tau, x, y ; u^*\right)=\int_\tau^{t_1} L_r(r) d r- \vee_{\tau, y}^{t_1} ( \x) + \Psi(x(t_1))  \leqslant v(\tau,x, y).
$$

Therefore, it follows from \cite[Theorem 2.3, Part (a)]{zhou1993verification}  that  $v(\tau,x,y) \le J(\tau,x, y;u(\cdot))$, for $(\tau,x, y) \in T \times \mathbb{R}^{n+1} $ and for any $u \in \mathbb{U}$. Thus, $\u^*$ is an optimal control.

Now, we will prove that given admissible tuple of functions $(\x^*, \u^*)$   is optimal for the control problem  then for almost all $t$;   there exists a tuple 
          $( p_\tau^*(t), p_x^*(t), p_y^*(t)) \in D^+  v(t, x^*(t), \vee_{\tau, y}^{t_1}(\x^*))$  satisfying, 
     $      p_\tau^*(t)=  H(t, x^*(t), \vee_{\tau, y}^{t_1}(\x^* ),u, p_x^*(t),  p_y^*(t)). $

By maximum principle, there exists  functions $\psi^1(\cdot)$ and $\psi^2(\cdot)$ corresponding to the optimal pair $(\x^*, \u^*)$ that satisfies the following
\begin{eqnarray*}
 H(t, x, \vee_{\tau, y}^{t_1}(\x), u, \psi^1, \psi^2)&:=& \psi^1 f(t, x, u) + \psi^2 f_y(t, x, u) \mathds{1} _{\{y =  L_{mx}(t,x), f_y(t,x,u)>0\}} + L_r(t, x, u)\\
 H_x &=& \psi^1_x f+ \psi^1 f_x + (\psi^2_x f+ \psi^2 f_x )\mathds{1} _{\{y \le L_{mx}(t,x)\}} + L_x
\end{eqnarray*}
\begin{eqnarray*}
    \bdot{\psi}^1(t) &=& H_x\left(t, x^*(t), \vee_{\tau, y}^{t_1}(\x^*),   u^*(t), \psi^1(t), \psi^2(t)\right) \quad \mbox{for almost all } t \in [\tau, T), \\
    \bdot{\psi}^2(t) &=& H_y\left(t, x^*(t), \vee_{\tau, y}^{t_1}(\x^*),   u^*(t), \psi^1(t), \psi^2(t)\right) \quad \mbox{for almost all } t \in [\tau, T), \\
    \psi^1(T) &=& \sigma \frac{d \vee_{\tau, y}^{t_1}(x^*(T))}{dx} + \frac{d\Psi(x^*(T))}{dx}\\
    \psi^2(T) &=& \sigma \frac{d \vee_{\tau, y}^{t_1}(x^*(T))}{dy}. 
\end{eqnarray*}
Then by \cite[Theorem 3.2]{zhou1990maximum}, for almost all $t \in [\tau, T)$, 
$$
\bigg(H\left(t, x^*(t), \vee_{\tau, y}^{t_1}(\x^*),   u^*(t), \psi^1(t), \psi^2(t)\right), \psi^1(t), \psi^2(t) \bigg) \in D^+ v(t, x^*(t), \vee_{\tau, y}^{t_1}(\x^*)).
$$
This completes the result. \eop

{\color{red} \textbf{Plan:}
\begin{enumerate}
    \item By Weierstrass Theorem, we have existence of solution to control problem. This implies value function $v$ is well defined. 
    \item Berg Maximum Theorem  
    \item Using Fleming and Sonar, page $99$, try to prove that the value function (Existence is proved by WT) is unique and Lipschitz continuous.
    \item Assume the existence of viscosity solution of HJB. 
    \item Verification theorem and all. 
\end{enumerate}

}}}

\bibliographystyle{IEEEtran}
\bibliography{ref}

\end{document}